\numberwithin{equation}{section}
\theoremstyle{plain} %% This is the default, anyway
\newtheorem{thm}[equation]{Theorem}
\newtheorem{lem}[equation]{Lemma}
\newtheorem{prop}[equation]{Proposition}
\theoremstyle{definition}
\newtheorem{defn}[equation]{Definition}
\theoremstyle{remark}
\newtheorem{rem}[equation]{Remark}
\newtheorem{ex}[equation]{Example}
\newtheorem*{VariableNoNum}{{\VariableText}}
\newtheorem{Variable}[equation]{{\VariableText}}
\theoremstyle{definition}
\newtheorem*{VariableNoNumBold}{{\VariableText}}
\newtheorem{VariableBold}[equation]{{\VariableText}}
\newenvironment{titled}[1]
     {\def\VariableText{{#1}}\begin{VariableNoNum}}
     {\end{VariableNoNum}}
\newenvironment{numbered}[1]
     {\def\VariableText{{#1}}\begin{Variable}}
     {\end{Variable}}
\newenvironment{SubSection}[1]
     {\def\VariableText{{\textrm{\textbf{#1}}}}\begin{VariableNoNumBold}}
     {\end{VariableNoNumBold}}
\newenvironment{NumberedSubSection}[1]
     {\def\VariableText{{\textrm{\textbf{#1}}}}\begin{VariableBold}}
     {\end{VariableBold}}
\newlength{\asidelength}
\def\Changed/{\ifvmode\else\vadjust{%
\vbox to 0pt{\vskip -\baselineskip%
\hbox to 0pt{\hss\vrule height 0pt depth 1.2\baselineskip\hskip 1em}\vss}}\fi}
\def\CHanged{\ifvmode\else\vadjust{%
\vbox to 0pt{\vskip -\baselineskip%
\hbox to 0pt{\hss\vrule height 0pt depth 1.2\baselineskip\hskip 1em}\vss}}\fi}
\def\Math#1{\def\MathString{#1}\futurelet\MathDelim\MathChoose}
\def\MathChoose{\ifmmode\let\MathDo\MathString%
              \else\let\MathDo\MathSkip\fi%
              \MathDo}
\def\MathSkip{\ifx\MathDelim/\def\MathDo{$\MathString$\EatOne}%
              \else\def\MathDo{$\MathString$}\fi%
              \MathDo}
\def\Text#1{\def\TextString{#1}\futurelet\TextDelim\TextSkip}
\def\TextSkip{\ifx\TextDelim/\def\TextDo{\TextString\EatOne}%
              \else\let\TextDo\TextString\fi%
              \TextDo}
\def\EatOne#1{}
\def\SkipToEndScan#1\EndScan{}
\def\Scan#1#2#3{\ifx#1#2#3\expandafter\SkipToEndScan\fi\Scan#1}
\def\Upper#1{%
\Scan#1aAbBcCdDeEfFgGhHiIjJkKlLmMnNoOpPqQrRsStTuUvVwWxXyYzZ#1#1\EndScan}
\def\Phrase#1 #2/#3/#4=#5 #6/#7/#8.{%
\expandafter\edef\csname#2#3\endcsname{\noexpand\Text{#6#7}}
\expandafter\edef\csname\Upper#2#3\endcsname{\noexpand\Text{\Upper#6#7}}
\expandafter\edef\csname#1#2#3\endcsname{\noexpand\Text{#5 #6#7}}
\expandafter\edef\csname\Upper#1#2#3\endcsname{\noexpand\Text{\Upper#5 #6#7}}
\expandafter\edef\csname#2#4\endcsname{\noexpand\Text{#6#8}}
\expandafter\edef\csname\Upper#2#4\endcsname{\noexpand\Text{\Upper#6#8}}
}
\newcommand{\whatever}{\text{--}}
\newcommand{\Z}{\Math{\mathbb{Z}}}
\newcommand{\F}{\Math{\mathbb{F}}}
\newcommand{\Fp}{\Math{\F_p}}
\newcommand{\Fpn}{\Math{\F_{p^n}}}
\newcommand{\Zp}{\Math{\Z_p}}
\newcommand{\Zpinfty}{\Math{\Z/p^\infty}}
\newcommand{\Q}{\Math{\mathbb{Q}}}
\newcommand{\Tensor}{\otimes}
\newcommand{\RightArrow}[1]{\xrightarrow{#1}} % @>#1>>
\newcommand{\Hom}{\operatorname{Hom}}
\newcommand{\Aut}{\operatorname{Aut}}
\newcommand{\Map}{\operatorname{Map}}
\newcommand{\End}{\operatorname{End}}
\newcommand{\hhh}{\operatorname{h}\!}
\newcommand{\thhh}{\tilde{\operatorname{h}}}
\def\HomotopyOrbit#1on#2/{\ensuremath{#2_{\hhh#1}}}
\def\RedHomotopyOrbit#1on#2/{\ensuremath{#2_{\thhh#1}}}
\newcommand{\weq}{\sim}
\newcommand{\iso}{\cong}
\newcommand{\Sphere}{\Math{\mathbb{S}}}
\newcommand{\Sph}{\Sphere}
\newcommand{\SP}{\Sphere}
\newcommand{\II}{\Math{\mathbb{I}}}
\newcommand{\IGeneric}{\Math{\mathcal{I}}}
\newcommand{\IIR}{\Math{\IGeneric}}
\newcommand{\IIRhat}{\Math{\hat{\IIR}}}
\newcommand{\IIRG}{\Math{\mathcal{G}}}
\newcommand{\IIRO}{\Math{\IGeneric}}
\newcommand{\IIpG}{\IIRG}
\newcommand{\IIpO}{\IIRO}
\newcommand\Mod{\operatorname{Mod}}
\newcommand{\RMod}{\Math{{}_R\Mod}}
\newcommand{\HomR}{\Hom_R}
\newcommand{\Ext}{\operatorname{Ext}}
\newcommand{\Tor}{\operatorname{Tor}}
\newcommand{\Cell}{\operatorname{Cell}}
\newcommand{\Cellk}{\operatorname{Cell}_k}
\newcommand{\TensorR}{\otimes_R}
\newcommand{\EndR}{\operatorname{End}_R}
\newcommand{\Lof}{\Math{L}}
\newcommand{\lbbracket}{[\![}
\newcommand{\rbbracket}{]\!]}
\newcommand{\KSmash}{\hat\Tensor}
\newcommand{\hatSmash}{\KSmash}
\newcommand{\SK}{\Math{\hat{\SPP}}}
\newcommand{\EHS}{\Math{\epsilon}}  % Hovey-Strickland E
\newcommand{\KTensor}{\KSmash}
\newcommand{\Sn}{\Math{\mathcal{S}}}  % localized sphere
\newcommand{\SKn}{\Math{\hat{\Sn}}}   % K-completed sphere
\newcommand{\En}{\Math{{E}}}  % Lubin-Tate spectrum
\newcommand{\kn}{\Math{{K}}}  % Morava K-theory
\newcommand{\Kp}{\kn}
\newcommand{\Ep}{\En}
\newcommand{\Sphp}{\Sn}
\newcommand{\SPP}{\Sn}
\newcommand{\KPP}{\kn}
\newcommand{\EPP}{\En}
\newcommand{\EC}{\Math{\mathcal{E}}}
\newcommand{\EndEn}{\EC}
\newcommand{\EndE}{\Math{\EndEn}}
\newcommand{\TensorEC}{\otimes_{\EC}}
\newcommand{\Eph}{\Math{\En^{\vee}}}
\newcommand{\Dual}[1]{D_{#1}}
\newcommand{\DK}{\hat D}
\newcommand{\Qp}{\Math{\mathbb{Q}_p}}
\newcommand{\Tensord}{\oslash}
\newcommand{\Line}{\Math{\mathcal{L}}}
\newcommand{\Lined}{\Math{\mathcal{L}^\#}}
\newcommand{\RR}{\Math{S}}
\newcommand{\mfm}{\Math{\mathfrak{m}}}
\newcommand{\mfn}{\Math{\mathfrak{n}}}
\newcommand{\Typen}{\Math{\mathcal{F}}}
\newcommand{\ModPic}{\Math{\mathbf{Pic}}}
\newcommand{\Rt}{\Math{R_\epsilon}}
\newcommand{\GeometricX}{\Math{X^\star}}
\begin{document}
\title[Short Title]{The long form}
\title[Gross-Hopkins duality]{Gross-Hopkins duality\\ and the Gorenstein condition}
\author {W. G. Dwyer, J. P. C. Greenlees and S. B. Iyengar}

\address{Department of Mathematics, University of Notre Dame, Notre
  Dame, Indiana 46556, USA} 
\email{dwyer.1@nd.edu}
\address{Department of Pure Mathematics, Hicks Building, Sheffield S3
  7RH, UK}
\email{j.greenlees@sheffield.ac.uk}
\address{Department of Mathematics, University of Nebraska,
  Lincoln, NE 68588, USA}
\email{iyengar@math.unl.edu}
\date{\today}
\thanks{JPCG acknowledges EPSRC support. SBI was partially supported
  by NSF grant DMS~0602498, and WGD by NSF grant DMS~0735448.}
\begin{abstract}
{Gross and Hopkins have proved that in chromatic stable
  homotopy, Spanier-Whitehead duality nearly coincides with
  Brown-Comenetz duality. We give a conceptual
  interpretation of this phenomenon in terms of a Gorenstein
  condition~\cite{rDGI} for maps of ring spectra. 
  }
\end{abstract}
\keywords{Brown-Comenetz duality, Gorenstein duality, Gorenstein maps,
  Gross--Hopkins duality, orientability, Spanier-Whitehead duality}
\subjclass[2000]{Primary 55P60, 55M05; Secondary 55P42, 55P43, 57N65, 57P10}
\maketitle

\section{Introduction}\label{SIntroduction}
Suppose that \SP/ is the sphere spectrum and \II/ its Brown-Comenetz
dual. The Spanier-Whitehead dual $\Dual\Sphere X$ of a spectrum $X$
is defined to be the mapping spectrum $\Map(X,\SP)$, while the
Brown-Comenetz dual $\Dual\II X$ is the spectrum $\Map(X,\II)$.  
These are very different from one another: for instance, Spanier-Whitehead duality
behaves well on homology (if $X$ is finite then
$H_i(\Dual\Sphere X)\iso H^{-i}X$), while Brown-Comenetz duality
behaves well on homotopy ($\pi_i(\Dual\II
X)\iso\Hom(\pi_{-i}X,\Q/\Z)$).

Nevertheless, Gross and Hopkins \cite{rHopkinsGross} have proved that
in some localized stable homotopy situations, the appropriate version
of Spanier-Whitehead
duality  nearly coincides with
Brown-Comenetz duality. Our goal is to give 
a conceptual interpretation for this phenomenon in the language of
Gorenstein duality \cite{rDGI}. This language covers Poincar\'e
duality as a special case, and in fact there is an interesting parallel between
the \emph{nearly} in the Gross-Hopkins result and the familiar fact
that for a manifold or a Poincar\'e complex, duality formulas are
always twisted by a possibly nontrivial stable normal bundle.

Our starting point is a
general notion
of  \emph{Brown-Comenetz dualizing module} $\IIR$
(\ref{DBrownComenetz}) for a ring spectrum map $R\to k$. Such an
$\IIR$ is an $R$-module spectrum which lifts to $R$-modules the
ordinary notion of duality for $k$-modules, just as, for instance,
$\Z/p^\infty$ lifts to $\Z$-modules the ordinary notion of vector
space duality over $\Z/p$. In the simple case of the ring homomorphism
$\Z\to\Z/p$,
 $\Z/p^\infty$  is essentially the
only option for a Brown-Comenetz dualizing module, but in some cases
there are a great many choices for $\IIR$.  For example, there is a
geometric context (\ref{ExNonUnique}) derived from a $1$-connected
finite complex $X$ in which the  choices for $\IIR$
correspond to stable spherical fibrations over~$X$. For expository
purposes, we will refer to
this context as \GeometricX.

There are two standard ways to construct  a
dualizing module for a ring spectrum map $R\to k$.
\begin{enumerate}
\item[(1)] \label{GorCase} If $R\to k$ is Gorenstein
  (\ref{DefineGorenstein}),  there is a way to obtain a dualizing
  module $\IIRG$ from $R$~itself. The duality functor
 $\Dual\IIRG$ over~$R$ agrees with Spanier-Whitehead duality over~$R$
 (\ref{DualitiesCoincide}). 
\end{enumerate}
For example, the context \GeometricX/ is Gorenstein if
 and only if $X$ satisfies Poincar\'e duality; in this case $\IIRG$ corresponds to the Spivak normal bundle
of~$X$.
\begin{enumerate}
\item[(2)] If $R$ satisfies a  different (milder) condition, there is a``trivial''
  dualizing module $\IIR_0$ constructed by coinduction
(\ref{InduceItUp}) from a dualizing module over the ground
  ring (usually $\SP$).
\end{enumerate}
In the context \GeometricX, $\IIR_0$ is
dualizing module given by the trivial stable spherical
fibration over~$X$.

When both of the constructions (1) and (2) go through, the question of whether
$\IIRG\sim\IIR_0$, or equivalently of whether
$\Dual\IIRG\sim\Dual{\IIR_0}$, is a
type of orientability issue  or a question of
triviality of the normal bundle (see the discussion following
\ref{SWisGor}).

Here are some examples. In the context of the ordinary ring map $\Z\to\Z/p$ both (1) and (2)
apply,  and the dualizing modules $\IIRG$ and $\IIR_0$ agree
(cf. \ref{padicGor}). This reflects the fact that for a finite abelian
$p$-group $A$, $\Ext^1_{\Z}(A,\Z)$ is naturally isomorphic to the
Pontriagin dual of~$A$.
For the analogous spectrum map $\SP\to\Z/p$  only (2) applies (see \ref{BCAlgebraic}), and so
there is no need (or opportunity) to compare $\IIR_0$ with $\IIRG$.
In the context \GeometricX,  if $X$ satisfies
Poincar\'e duality both (1) and (2) apply. The difference between $\IIR_0$ and $\IIRG$ is
then the
difference between the trivial spherical fibration over $X$ and the
Spivak normal bundle, and this difference might for instance be tested by comparing
characteristic classes.
Something very similar happens in the Gross-Hopkins context.
Here the ring map $R\to k$ is  $\Sn\to K(n)$, where $\Sn$ is the
$L_n$-local sphere and $K(n)$ is Morava $K$-theory (implicit here is the
choice of a prime number~$p$). Both $\IIRG$ and $\IIR_0$ exist,  they do not
quite agree, and they can be distinguished
(\ref{ComplicatedNaturality}) by an algebraic
calculation~\cite{rGHVectorBundles} that closely mimics the
technique of  distinguishing two spherical fibrations by calculating their
Stiefel-Whitney classes (equivalently, by calculating the action of
the Steenrod algebra on the respective Thom classes). The duality functor
    $\Dual{\IIR_0}$ is exactly garden-variety Brown-Comenetz duality.
We classify {all} possible
Brown--Comenetz dualizing modules in this chromatic case 
 (these are the analogs of the spherical
fibrations in the context \GeometricX) 
and we find that they correspond bijectively to invertible $K(n)$-local
spectra~(\ref{GHClassifyBC}).

\begin{rem}
  Many computations in this article are strikingly similar to results
  from commutative algebra; for instance, compare
  \ref{InvertibleMeansNiceK} and \ref{BCequalsInvertible} below to
    \cite[5.1]{antV4}. A significant part of what we do amounts to
comparing functorially constructed dualizing objects; work like this has also
    been undertaken in commutative algebra, particularly by Lipman and
    his coauthors, because it is tied to the problem of constructing
    the $f^!$ functor.
\end{rem}

In describing our point of view below, we start with the general notion
of Brown-Comenetz duality and use this to describe the homotopical form
of Gorenstein duality \cite{rDGI}. We repeatedly invoke  the context \GeometricX/ to
put the ideas in a more familiar frame of reference. Finally
we indicate how Gross-Hopkins duality fits into the picture.
This paper could not have been written without \cite{rHS} and
\cite{rS}; a lot of what we do is to give a different
slant to the material in \cite{rS}. Although our treatment has an
intrinsic interest, it can also be viewed as an extended example of the
theory of \cite{rDGI}, an example which highlights the importance of
orientability issues.

\begin{numbered}{Some notation}\label{FiniteLength}
  We refer to a ring spectrum $R$ as an \emph{\salgebra/}, and a
  module spectrum over $R$ as an \emph{$R$-module} \cite{rEKMM}
  \cite{rHSS}; we write ${}_R\Mod$ and
  $\Mod_R$ for the respective categories of left and right modules. A map between
  spectra is a \emph{weak equivalence} (\emph{equivalence} for short)
  if it induces an isomorphism on homotopy groups. If $M$, $N$ are
  left $R$-modules, then $\HomR(M,N)$ denotes the spectrum of
  (derived) $R$-module maps between them; if $M$ is a left $R$-module
  and $N$ a right $R$-module, then $N\TensorR M$ is the (derived)
  smash product of $M$ and $N$ over $R$. Every spectrum is an
  $\SP$-module; if $X$ and $Y$ are spectra, $\Hom(X,Y)$ stands for
  $\Hom_{\Sphere}(X,Y)$ and $X\Tensor Y$ for $X\Tensor_{\Sphere}Y$.

  There's no harm in treating an ordinary ring $R$ % (also called a
   as \ansalgebra, essentially by restriction along the map $\SP\to\Z$.  In that case a \emph{module}
  over $R$ in our sense corresponds to what is usually called a chain
  complex over $R$, $\HomR(M,N)$ to the derived mapping complex, and
  $N\TensorR M$ to the derived tensor product.  If $R$ is an ordinary %\adiscrete/
  ring and $M$, $N$ are \discrete/ %
  %(i.e. ordinary) 
  left $R$-modules,
  treated as chain complexes concentrated in degree~$0$, then
  according to our conventions $\HomR(M,N)$ is a spectrum with
  $\pi_i\HomR(M,N)\iso\Ext^{-i}_R(M,N)$. Similarly, if $N$ is 
  \adiscrete/ right $R$-module, then $N\TensorR M$ is a spectrum with
  $\pi_i(N\TensorR M)\iso \Tor_{i}^R(N,M)$. In these %\discrete/ 
  cases
  we write $\Ext^0_R(M,N)$ for the usual group of homomorphisms $M\to
  N$, and $N\Tensord_RM=\Tor_0^R(N,M)$ for the usual tensor product.

  If $R$ is \adiscrete/ ring with a unique maximal
  ideal~$\mfm$, we will refer to \adiscrete/ finitely generated
  \mfm-primary torsion $R$-module as a \emph{finite length $R$-module}.

  If $R$ is \ansalgebra/ and $k$, $M$ are $R$-modules, then
  $\Cellk(M)$ denotes the \emph{$k$-cellular approximation} of $M$:
  $\Cellk(M)$ is built from $k$ (\ref{DDefineCellular}), and there is a map $\Cellk(M)\to M$
  which is a \emph{$\Cellk$-equivalence}, i.e., induces an equivalence
  on $\Hom_R(k,\whatever)$.  
  
\end{numbered}

\begin{NumberedSubSection}{Brown-Comenetz duality}\label{DefineEC}
  Suppose that $R\to k$ is a map of \salgebras. Let $\EC$ be the derived
  endomorphism \salgebra/ $\EndR(k)$.  An $R$-module $M$ is said to be
  \emph{\ec/ from $k$} if the natural evaluation map
  \begin{equation}\label{EEffectively}
          \HomR(k,M)\TensorEC k\to M
  \end{equation}
  is an equivalence (cf. \ref{ConstructingCellk}). 

  \begin{rem}
    If $M$ is \ec/ from $k$ then $M$ is built from $k$  as an
    $R$-module.
    For some $R$ and $k$, the converse holds (\ref{Proxy}). 
  \end{rem}

   \begin{defn}
   \label{DBrownComenetz}
    A \emph{Brown-Comenetz dualizing module} for $R\to k$ is an
    $R$-module $\IIR$ which is effectively constructible from $k$ and
    has the property that, for some $d\ge0$, $\HomR(k,\IIR)$ is
    equivalent as a left $k$-module to $\Sigma^dk$.
  \end{defn}
  
  Giving such a dualizing module \IIR/ involves finding a way of extending to
  $R$-modules the notion of ordinary (i.e., Spanier-Whitehead) duality
  for $k$-modules. As \ref{EEffectively} suggests, in favorable cases
  \cite[6.9]{rDGI} these dualizing modules correspond to appropriate
  right $\EC$-module structures on (a suspension of)~$k$.

  \begin{numbered}{Examples (uniqueness)} \label{BCAlgebraic} \cite[\S5]{rDGI}
    The module $\Z/p^\infty$ is a Brown-Comenetz dualizing module for
    $\Z\to \Z/p$. The $p$-primary summand of the spectrum $\II$ is a
    Brown-Comenetz dualizing module for $\Sph\to\Z/p$.  
    In both of these cases, up to suspension and equivalence there
    is only one Brown-Comenetz dualizing module for $R\to k$.
  \end{numbered}

  \begin{numbered}{Examples (\GeometricX/, non--uniqueness)}\label{ExNonUnique}
    Suppose that $X$ is a $1$-connected based finite CW-complex.  Let
    $k$ denote $\Sph$, and let $R=C^*(X;k)$ denote the
    Spanier-Whitehead dual (over \Sph/) of the unreduced suspension
    spectrum of $X$. Then $R$ is \ansalgebra/ under a
    multiplication induced by the diagonal map, and there is an
    augmentation $R\to k$ given by restriction to the basepoint of
    $X$.
    As in \ref{PDDuality}, Brown--Comenetz dualizing modules for $R\to
    k$ correspond bijectively up to equivalence to stable spherical
    fibrations over~$X$.
  \end{numbered}

  \begin{numbered}{Examples (\Coinduction/)}\label{InduceItUp}
    Suppose that $T\to R$ is a map of \salgebras, and that $J$ is a
    Brown-Comenetz dualizing module for $T\to k$.  Let
    $\IIR=\Cell^R_k\Hom_T(R,J)$.  If $\IIR$ is effectively
    constructible from $k$, then $\IIR$ is a Brown-Comenetz dualizing
    module for $R\to k$, called the Brown-Comenetz dualizing module
    \emph{\coinduced/} from $J$.
  \end{numbered}

\end{NumberedSubSection}

\begin{NumberedSubSection}{Gorenstein duality}
  Let $f:R\to k$ be as above.

  \begin{defn}\label{DefineGorenstein} \cite[8.1]{rDGI}
    The map $f:R\to k$ is \emph{Gorenstein} if $\Cellk(R)$ is a
    Brown-Comenetz dualizing module for $f$. 
  \end{defn}

  \begin{rem}\label{DualitiesCoincide}
    Suppose that $R\to k$ is Gorenstein, with associated
    Brown-Comenetz dualizing module $\IIRG=\Cellk(R)$.  The map
    $\IIRG\to R$ induces an equivalence $\HomR(M,\IIRG)\to\HomR(M,R)$
    for $M=k$ and thus for any $R$-module $M$ which is built from $k$.
    For such $M$, this gives an equivalence
    \[
                \Dual\IIRG M\weq \Dual RM\,.
    \]
    In other words, if $R\to k$ is Gorenstein, then for $R$-modules
    which are built from $k$, Spanier-Whitehead duality agrees 
    with the  variant of Brown-Comenetz duality singled out
    by the Gorenstein condition.
  \end{rem}

  \begin{numbered}{Example (algebra)}\label{GorAlgebraExamples} 
    Suppose that $R$ is the formal power series ring
    $\Zp\lbbracket x_1,\ldots,x_{n-1}\rbbracket$, that $\mfm
    \subset R$ is its maximal ideal, and that $k\iso R/\mfm$
    is its residue field $\Fp$.  The map $R\to k$ is Gorenstein, with
    associated Brown-Comenetz dualizing module $\IIRG$.  For 
    \afinitelength/ (\ref{FiniteLength}) $R$-module~$M$,
    the dual $\Dual\IIRG(M)$ is given by
    \[
      \Dual{\IIRG}(M)\weq \Sigma^{-n}\Ext^n_R(M,R)\,.
    \]
    As in \ref{BCAlgebraic}, the $\Zp$-module $\Zpinfty$
    is a Brown-Comenetz dualizing module for $\Zp\to\Fp$, and as in
    \ref{InduceItUp} there is \acoinduced/ Brown-Comenetz dualizing
    module $\IIR=\IIR_0$ for $R\to k$. For $M$ as before the dual
    \[
      \Dual{\IIR}(M)\weq\Ext^0_{\Zp}(M,\Zpinfty) 
    \]
    is the ordinary Pontriagin dual of $M$.  It turns out
    (\ref{SFullDuality}) that $\IIRG$
    is equivalent as an $R$-module to $\Sigma^{-n}\IIR$, and hence
    that on the category of \finitelength/  $R$-modules, the functor
    $\Ext^n_R(\whatever,R)$ is naturally isomorphic to
    $\Ext^0_{\Zp}(\whatever,\Zpinfty)$.
  \end{numbered}

  \begin{numbered}{Example (\GeometricX, Poincar\'e duality)}\label{PDDuality}\label{ListPoincareBC}
    (See \cite{rDGI} and \cite{rKlein}.)  This example is based on the
    following theorem.

    \begin{prop}\label{SWisGor}
      Suppose that $X$ is a based finite $1$-connected CW-complex,
      $k=\Sph$, and $R=C^*(X;k)$, as in \ref{ExNonUnique}. Then $R\to
      k$ is Gorenstein if and only if $X$ is a Poincar\'e duality
      space.
    \end{prop}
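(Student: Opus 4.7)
The plan is to reduce the Gorenstein condition to a single equivalence involving $\Hom_R(k,R)$, identify this spectrum geometrically, and then invoke the standard characterization of Poincar\'e duality spaces in terms of the Spivak normal fibration (cf.~\cite{rKlein}).

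First I would unpack the Gorenstein condition. By Definitions~\ref{DefineGorenstein} and~\ref{DBrownComenetz}, $R \to k$ is Gorenstein precisely when $\Cellk(R)$ is \ec/ from $k$ and $\Hom_R(k, \Cellk(R))$ is equivalent as a left $k$-module to $\Sigma^d k$ for some $d \ge 0$. The first condition holds automatically from the construction of $\Cellk$, and the defining $\Cellk$-equivalence $\Cellk(R) \to R$ induces an equivalence $\Hom_R(k, \Cellk(R)) \weq \Hom_R(k, R)$. The Gorenstein condition therefore reduces to asking whether $\Hom_R(k, R) \weq \Sigma^d k$ as a $k$-module, and this is the invariant I want to analyze.

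The heart of the argument is a geometric identification of $\Hom_R(k, R)$. Here I would exploit that $R = C^*(X;\Sph)$ is Koszul dual to $\Sigma^\infty \Omega X_+$: resolving $k$ as an $R$-module by a bar construction, the associated Eilenberg--Moore / Rothenberg--Steenrod spectral sequence converges because $X$ is 1-connected, and after tracking through the dualities I expect to identify $\Hom_R(k,R)$ with Klein's dualizing spectrum of $X$ (the sphere-spectrum analogue of the ``costalk at the basepoint of the constant sheaf $R$''), which in turn is expressible as the Spanier-Whitehead dual of the stable fiber of the Spivak normal fibration of $X$. Under this identification, $\Hom_R(k,R)$ is equivalent to a $k$-module suspension of $k$ precisely when the Spivak normal fibration of $X$ is (stably) an honest spherical fibration of some rank $n$, with $d$ recording this rank up to sign.

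Given the identification, both directions of the proposition follow from Klein's theorem~\cite{rKlein}: a finite 1-connected CW-complex is a Poincar\'e duality space of formal dimension $n$ if and only if it admits a Spivak normal spherical fibration of rank $n$. The main obstacle in the plan is the geometric identification of $\Hom_R(k, R)$ with the dualizing spectrum; this is where the 1-connectedness of $X$ is essential (for convergence of Eilenberg--Moore). Once that identification is in hand, the subsequent matching of the shift $d$ with the formal dimension is routine bookkeeping.
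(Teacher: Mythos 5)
The paper itself does not supply a proof of this proposition; it is stated with citations to \cite{rDGI} and \cite{rKlein}, and the surrounding discussion (the dualizing module is $X^\nu$, the coinduced one is $X^0$) relies on those sources. Your overall route — reduce to the claim that $\Hom_R(k,R)$ is a shifted sphere, then identify $\Hom_R(k,R)$ via Koszul duality with Klein's dualizing spectrum of $\Omega X$ and invoke his characterization of Poincar\'e duality spaces — is exactly the route those references take, so the plan is sound.

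There is, however, one genuine slip in the reduction step. You assert that the condition ``$\Cellk(R)$ is effectively constructible from $k$'' holds ``automatically from the construction of $\Cellk$.'' It does not: effective constructibility is precisely the point of \ref{ConstructingCellk} and is \emph{not} automatic; \ref{Proxy} shows it holds once $k$ admits a Koszul complex over $R$, which is a real hypothesis. In the present situation it does hold, but for a substantive reason: because $X$ is finite and $1$-connected, the Eilenberg--Moore spectral sequence converges and shows that $k=\Sph$ is finitely built from $R=C^*(X;\Sph)$, hence $k$ is its own Koszul complex, so \ref{Proxy} applies and \ref{RecognizeGorenstein} gives the reduction to $\Hom_R(k,R)\weq\Sigma^dk$. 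You already invoke the Eilenberg--Moore convergence later for a different purpose, so the fix is to move that observation up front and use it to justify the first step; as written, the reduction rests on a false ``automatic'' claim.

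A smaller point: you phrase the final step as ``the Spivak normal fibration of $X$ is (stably) an honest spherical fibration.'' The Spivak fibration is usually only \emph{defined} for Poincar\'e complexes, so this risks circularity as stated. It is cleaner to phrase things the way Klein does: $\Hom_R(k,R)$ is identified with the dualizing spectrum $D_{\Omega X}=\Sph[\Omega X]^{h\Omega X}$, and Klein's theorem says $D_{\Omega X}$ is a (desuspended) sphere if and only if $X$ satisfies Poincar\'e duality, with the formal dimension read off from the shift. With that rephrasing and the Koszul-complex repair above, your argument is correct.
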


      In the situation of \ref{SWisGor}, there are usually many
      Brown-Comenetz dualizing modules for $R\to k$: these are exactly
      the Thom spectra $X^\rho$ obtained from stable spherical
      fibrations $\rho$ over $X$. 
      If $X$ is a Poincar\'e duality space of formal
      dimension~$d$, then as in   \cite{rAtiyah}  the Brown-Comenetz dualizing module
      $\IIRG=\Cellk(R)\weq R$ provided by the Gorenstein condition
      \cite[8.6]{rDGI} is
      $X^\nu$, where $\nu$ is the stable Spivak normal bundle of $X$,
      desuspended to have stable fibre dimension $-d$. 

      Since the spectrum $k$ is a Brown-Comenetz dualizing module for $k\to
      k$, it follows as in \ref{InduceItUp} there is \acoinduced/
      Brown-Comenetz dualizing module $\IIR=\IIR_0$ for $R\to k$
      (\cite[9.16]{rDGI}, \ref{RecognizeGorenstein}). This
      \coinduced/ dualizing module is the Thom complex $X^0$ of the
      trivial bundle.
      The $R$-module $\IIRG$ is equivalent to $\IIR$ (up to
      suspension) if and only if $\nu$ is trivial, or in other words
      if and only if $X$ is orientable for stable cohomotopy.

      Observe that by the Thom isomorphism theorem, the  two
      dualizing modules $\IIRG$ and $\IIR$  cannot be distinguished
      by mod~$2$ cohomology, although they can sometimes be
      distinguished by the action of the Steenrod algebra on mod~$2$
      cohomology.

\end{numbered}

  \begin{numbered}{Aside on functoriality}\label{LFunctoriality}
    For later purposes we describe an extended functoriality property
    of the isomorphisms described in \ref{GorAlgebraExamples}.  Let
    $R\to k$ be as in \ref{GorAlgebraExamples}, but widen the module
    horizon to include the category of \finitelength/ (\ref{FiniteLength}) \emph{skew} $R$-modules: the objects
    are \discrete/ $R$-modules as before, but a map $M\to M'$ is a pair
    $(\sigma,\tau)$, where $\sigma$ is an automorphism of $R$ and
    $\tau:M\to M'$ is a map of abelian groups such that for $r\in R$ and
    $m\in M$, $\tau(rm)=\sigma(r)\tau(m)$. Both $\Dual{\IIRG}$ and
    $\Dual{\IIR}$ extend to this larger category (with the same
    definitions as before), but the functors are \emph{not} naturally
    equivalent there. This is reflected in the fact that if
    $G=\Aut(R)$, then the twisted group ring $R[G]$ acts naturally
    both on $\IIRG$ and on $\IIR$
    in such a way that $\IIRG$ and $\IIR$ are equivalent as
    $R$-modules, but not as $R[ G]$-modules.  The discrepancy between
    \IIRG/ and \IIRO/ has a simple description. Let
    $S=\Zp\lbbracket x_1,\ldots,x_{n-1},y_1\ldots,y_{n-1}\rbbracket$ be the evident
    completion of $R\Tensord_{\Zp}R$ and let
    $\Line=\Tor_{n-1}^{S}(R,R)$. The module structure here is such
    that both $x_i$ and $y_i$ act on~$R$ by multiplication by~$x_i$.  (The object $\Line$ might be
    characterized as
    a type of Hochschild homology group
    of $R$.) Then $\Line$ is \adiscrete/ $R[G]$-module which is free of
    rank~$1$ as an $R$-module, and there is a natural map
    $\Sigma^n\Line\Tensor_R\IIRG\to\IIR$ of $R[G]$-modules which is an
    equivalence (\ref{SFullDuality}). (The action of $G$ on the tensor product is
    diagonal). This implies that on the category of \finitelength/ skew $R$-modules
    there is a natural isomorphism of functors
    \[\Line\Tensord_R\Ext_n^R(\whatever,R)\weq\Ext_0^{\Zp}(\whatever,\Zpinfty)\,.\]
  \end{numbered}
\end{NumberedSubSection}

\begin{NumberedSubSection}{Gross-Hopkins duality}\label{GHIntro}
  Fix an integer $n\ge1$, and let $\Lof=L_n$ denote the localization
  functor on the stable category corresponding to the homology theory
  $K(n)\vee\cdots\vee K(0)$ , where $K(i)$ is the $i$'th Morava
  $K$-theory.  Let $\Sphp=L_n(\Sphere)$ and let $\Kp=K(n)$.  There is
  an essentially unique \salgebra/ homomorphism $\Sphp\to \Kp$.  The
  first component of Gross-Hopkins duality is the following
  statement.

  \begin{thm}\label{GHShift}
    The homomorphism $\Sphp\to\Kp$ is Gorenstein.
  \end{thm}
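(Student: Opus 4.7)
The plan is to unpack Definitions~\ref{DBrownComenetz} and~\ref{DefineGorenstein} and exhibit $\Cellk(\Sphp)$ as a Brown--Comenetz dualizing module for $\Sphp\to\Kp$. This breaks into three tasks: identify the cellularization, verify effective constructibility, and show that $\Hom_{\Sphp}(\Kp,\Sphp)$ is a shifted copy of~$\Kp$.

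First I would identify $\Cellk(\Sphp)$. Because $\Sphp=L_n\Sphere$ is a smashing Bousfield localization, the category of $\Sphp$-modules is just the $L_n$-local category and $\Hom_{\Sphp}$ coincides with the ambient function spectrum. The chromatic fracture square writes $\Sphp$ as the homotopy pullback of $\KS$ and $L_{n-1}\Sphere$ over $L_{n-1}\KS$. Since $\Kp$ is $K(i)$-acyclic for $i<n$ we have $L_{n-1}\Kp=0$, and hence $\Hom_{\Sphp}(\Kp,L_{n-1}X)\simeq 0$ for every $X$; in particular the $L_{n-1}$-factor of the fracture square is $\Kp$-null. It follows that $\Cellk(\Sphp)$ may be identified with the monochromatic sphere $\operatorname{fib}(L_n\Sphere\to L_{n-1}\Sphere)$. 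By the defining property of cellularization, $\Hom_{\Sphp}(\Kp,\Cellk\Sphp)\simeq\Hom_{\Sphp}(\Kp,\Sphp)$, and applying $\Hom_{\Sphp}(\Kp,-)$ to the fracture square with the same vanishing reduces the problem to computing $\Hom_{\Sphp}(\Kp,\KS)$.

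Next I would verify the effective constructibility condition~\ref{EEffectively}. A generalized Moore spectrum $F$ of type $n$ is compact in $\Sphp$-modules and, by the Hopkins--Smith thick subcategory theorem, generates the same thick tensor-ideal in $L_n$-local spectra as $\Kp$. The evaluation map in~\ref{EEffectively} is automatically an equivalence for the compact object $F$; the class of modules on which it is an equivalence is thick and closed under $\Kp$-cellular constructions, so it includes $\Kp$ itself and hence also $\Cellk(\Sphp)$.

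The substantive content is the computation
\[
\Hom_{\Sphp}(\Kp,\KS)\simeq\Sigma^{d}\Kp
\]
as left $\Kp$-modules, for some integer~$d$ (so that $\pi_{*}$ is free of rank one over $\pi_{*}\Kp$, concentrated in a single degree). This is essentially the Gross--Hopkins duality theorem as clarified in \cite{rHS} and \cite{rS}. One concrete route combines the Devinatz--Hopkins identification $\KS\simeq\En^{hG_n}$ with a Koszul resolution of $\Kp$ as an $\En$-module (which computes $\Hom_{\En}(\Kp,\En)$ as a single suspension of~$\Kp$), and then applies Poincar\'e duality for the $n^2$-dimensional $p$-adic Lie group~$G_n$ on passing to homotopy fixed points. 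The main obstacle is precisely this last step: it is where the work of Gross--Hopkins, Hovey--Strickland and Strickland is actually used. Once the single-degree concentration is in hand, Definition~\ref{DefineGorenstein} is satisfied and the theorem follows.
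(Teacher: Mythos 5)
Your reduction of the Gorenstein question to a single mapping--spectrum computation is sound, and the first two of your three tasks line up with the paper's scaffolding: the fracture-square argument that $\Hom_{\Sphp}(\Kp,\Sphp)\simeq\Hom_{\Sphp}(\Kp,\SK)$ is the same reduction the paper gets from \ref{TwoKEquivalences}, and the Koszul-complex argument for effective constructibility is exactly \ref{knIsKoszul} feeding into \ref{Proxy} and \ref{RecognizeGorenstein}. Where you genuinely diverge is in the ``substantive content'' step. You propose to compute $\Hom(\Kp,\SK)$ head-on, via the Devinatz--Hopkins identification $\SK\simeq\En^{hG_n}$, a Koszul resolution of $\Kp$ over $\En$, and Poincar\'e duality for the $n^2$-dimensional $p$-adic Lie group $G_n$. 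The paper never performs that computation for $\SK$ in proving \ref{GHShift}. Instead it invokes the invertibility of $\IIRhat$ (\ref{IisInvertible}, a cited Hovey--Strickland result) and the fact that smashing with an invertible object is a self-equivalence of the $K(n)$-local category (\ref{SelfEquivalence}); this turns $\Hom(\Kp,\SK)$ into $\Hom(\Kp\KSmash\IIRhat,\IIRhat)\simeq\Hom(\Kp,\IIR)$, and for the \emph{coinduced} Brown--Comenetz module $\IIR$ the answer is an elementary Pontriagin-duality calculation on homotopy groups (\ref{IIRGivesPontriagin}, \ref{KnIsBC}). So both routes ultimately lean on the same deep chromatic input, but the paper packages it into the citable invertibility of $\hat{I_n}$ and keeps the local computation trivial, whereas yours exposes the Poincar\'e duality of $G_n$ directly; the latter is closer in spirit to how the paper proves the separate statement \ref{DualOfE} about $\DK\En$, and it would require filling in a descent argument to pass $\Hom(\Kp,-)$ through $(-)^{hG_n}$ that you currently leave as a sketch. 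Your plan is viable but, as you yourself flag, that last step is left as a hand-wave; the paper's route avoids needing it at all for this theorem.
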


  This theorem provides a Brown-Comenetz dualizing module
  $\IIpG=\Cellk\Sphp$ for $\Sphp\to\Kp$.  The ordinary Brown-Comenetz
  dualizing spectrum \II/ is a Brown-Comenetz dualizing module for
  $\Sphere\to\Kp$; as in \ref{InduceItUp} this gives rise to \acoinduced/
  Brown-Comenetz dualizing module
  $\IIpO=\Cellk\Hom_{\Sphere}(\Sphp,\II)$ for $\Sphp\to\Kp$
  (\ref{RecognizeGorenstein}, \ref{knIsKoszul}).  The
  second component of Gross-Hopkins duality is the assertion that
  \IIpG/ \emph{cannot} be distinguised from \IIpO/ by the most
  relevant applicable homological functor. This is analogous in this
  context to the Thom isomorphism theorem (cf. \ref{ListPoincareBC}).
  Let $\Ep$ be the \salgebra/ of \cite{rS}, with
  \begin{equation}\label{MeaningOfE}
       \Ep_*= \pi_*\Ep = W \lbbracket u_1,\ldots,u_{n-1}\rbbracket[u,u^{-1}]\,,
  \end{equation}
  where $u_k$ is of degree~$0$, $u$ is of degree~$2$, and $W$ is the
  Witt ring of the finite field $\F_{p^n}$. For spectra $X$ and $Y$,
  let $X\hatSmash Y=L_{\Kp}(X\Tensor Y)$,   where $L_{\Kp}$ is
  localization with respect to $\Kp$. Following \cite{rS}, for
  any $X$ we write $\Eph_*(X) =\pi_*(\Ep\hatSmash X)$.

  \begin{thm}\label{GHSame}
    Both $\Eph_*\IIpG$ and $\Eph_*\IIpO$ are rank~$1$ free modules
    over $\Ep_*$.
  \end{thm}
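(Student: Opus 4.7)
The strategy is to prove a more general statement: for \emph{any} Brown-Comenetz dualizing module $\IIR$ for $\Sphp \to \Kp$, the $\Ep_*$-module $\Eph_*\IIR$ is free of rank one. Since both $\IIpG$ and $\IIpO$ are such dualizing modules, this will imply the theorem at once.

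First I would combine the two defining properties of a Brown-Comenetz dualizing module. Effective constructibility (\ref{EEffectively}) provides an equivalence
\[
\IIR \simeq \Hom_{\Sphp}(\Kp,\IIR)\TensorE\Kp,
\]
where $\EC=\End_{\Sphp}(\Kp)$, and the dualizing condition gives $\Hom_{\Sphp}(\Kp,\IIR)\simeq \Sigma^d\Kp$ as a left $\Kp$-module. Since $\Ep\hatSmash(-)$ commutes with derived tensor products over $\EC$, applying it yields
\[
\Ep\hatSmash\IIR \;\simeq\; \Hom_{\Sphp}(\Kp,\IIR)\TensorE(\Ep\hatSmash\Kp).
\]
The plan is then to compute the right-hand side by invoking the explicit chromatic description of $\pi_*(\Ep\hatSmash\Kp)$ and of $\EC$ from Strickland \cite{rS}, together with an identification of the right $\EC$-module structures on $\Sigma^d\Kp$ that extend the given left $\Kp$-action.

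The main obstacle is this last point: the equivalence $\Hom_{\Sphp}(\Kp,\IIR)\simeq \Sigma^d\Kp$ is a priori only known as left $\Kp$-modules, whereas the right $\EC$-structure is what actually enters the tensor product over $\EC$. Showing that any such enrichment produces a $\Kp$-locally invertible spectrum---and hence, by the known structure of the Picard group of the $\Kp$-local category \cite{rHS}, a spectrum with rank-one free $\Eph_*$---is essentially the content of the bijection promised in the abstract between Brown-Comenetz dualizing modules and invertible $\Kp$-local spectra. Once that identification is secured, the freeness of $\Eph_*\IIpG$ and $\Eph_*\IIpO$, each of rank one, is immediate.
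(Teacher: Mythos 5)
Your proposal is correct in spirit but takes a heavier and more roundabout route than the paper's own proof. You propose to derive both statements at once from the general principle that every Brown--Comenetz dualizing module for $\SPP\to\KPP$ localizes to an invertible $\KPP$-local spectrum and then to invoke the characterization of invertible spectra by $\Eph_*$. This is logically sound and not circular---the later Theorems~\ref{BCequalsInvertible} and \ref{GHClassifyBC} depend on \ref{GHShift} but not on \ref{GHSame}---yet it pulls in essentially the full machinery of the last section (\ref{BCDoubleDual}, \ref{BCequalsInvertible}, and the bijection argument) to establish what the paper disposes of with two short observations. The paper's proof treats the two modules separately and more directly: for $\IIpG=\Cell_{\kn}\Sphp$ it notes that $\IIpG\to\Sphp$ is a $\Cell_{\kn}$-equivalence and hence, by the self-dual Koszul complex argument of \ref{TwoKEquivalences}, an $\Eph_*$-equivalence, so $\Eph_*\IIpG\iso\Eph_*\Sphp\iso\Ep_*$ (and in fact $\Gamma$-equivariantly, which your argument would not deliver); for $\IIpO$ it simply cites Hovey--Strickland \cite[10.2(e)]{rHS} that $\IIRhat$ is invertible and then applies \ref{InvertibleMeansNiceK} after noting $\IIpO\to\IIRhat$ is an $\Eph_*$-isomorphism. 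Your unified argument is conceptually attractive and is, in effect, what the paper accomplishes later with \ref{GHClassifyBC}, but as a proof of \ref{GHSame} on its own it front-loads the hardest part of the paper; and your preliminary attempt at a direct computation via $\Ep\hatSmash(\whatever)$ commuting with $\otimes_{\EC}$ glosses over the fact that $L_{\KPP}$ does not preserve colimits, though you abandon that route in any case.
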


  The final and most difficult component of Gross-Hopkins duality is a
  determination of how $\Eph_*\IIpG$ differs from $\Eph_*\IIpO$ as a
  module over the ring of operations in $\Ep_*$; this is analogous to
  distinguishing between two Thom complexes by considering the action
  of Steenrod algebra on mod~$2$ homology (cf.  \ref{ListPoincareBC}).

  We begin by comparing the homologies of $\Dual\IIpG(\Typen)$ and
  $\Dual\IIpO(\Typen)$ when $\Typen$ is a finite complex of type~$n$, i.e.,
  a  module over $\Sphp$ which is finitely built from $\Sphp$ and has
  $K(i)_*\Typen=0$ for $i<n$ and $K(n)_*\Typen\ne0$. 
  These conditions imply that each
  element of $\Eph_*\Typen$ is annihilated by some power of the maximal
  ideal $\mathfrak m\subset \Ep_0$ \cite[8.5]{rHS}.

  \begin{prop}\label{SimpleNaturality}
    Suppose that $\Typen$ is a finite complex of type~$n$. Then there are
    natural isomorphisms
    \[
        \begin{aligned}
        \Eph_{-i}\Dual\IIpG \Typen&\,\iso\, \Ext^n_{\Ep_0}(\Eph_{i-n}\Typen, \Ep_0)\\
        \Eph_{-i}\Dual\IIpO \Typen &\,\iso\, \Ext^0_{\Zp} (\Eph_{i+n^2}\Typen, \Zpinfty)\,.
        \end{aligned}
    \]
  \end{prop}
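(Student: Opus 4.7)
The plan is to treat each isomorphism separately by first identifying the dualizing module with something concrete and then computing $\Eph_*$ of that object.

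For the first formula, I use that $\Sphp\to\Kp$ is Gorenstein (Theorem~\ref{GHShift}). A finite complex of type~$n$ is, as an $\Sphp$-module, finitely built from~$\Kp$ (a standard consequence of the type~$n$ hypothesis combined with finiteness), so Remark~\ref{DualitiesCoincide} applies and yields a natural equivalence
\[
\Dual{\IIpG}\Typen \weq \Dual{\Sphp}\Typen \weq D\Typen \wedge \Sphp,
\]
where $D\Typen$ denotes the ordinary Spanier--Whitehead dual. Since $\Typen$ is finite, $\Ep\hatSmash D\Typen \weq \Ep\wedge D\Typen$, whose homotopy groups are $\Ep^{i}(\Typen)$. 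I then invoke the universal coefficient spectral sequence
\[
\Ext^{s,t}_{\Ep_*}(\Eph_*\Typen, \Ep_*) \;\Longrightarrow\; \Ep^{s+t}(\Typen).
\]
By Hovey--Strickland~\cite{rHS}, each graded piece $\Eph_j\Typen$ is a finite length module over the regular local ring $\Ep_0$ of Krull dimension~$n$. Since $\Ep_0$ is Gorenstein, $\Ext^s_{\Ep_0}(\whatever, \Ep_0)$ vanishes on finite length modules for $s\ne n$. The spectral sequence therefore collapses onto its $s=n$ line, and translating $\Ep_*$ into $\Ep_0$ via the $2$-periodicity of $\Ep_*$ produces the first isomorphism.

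For the second formula, I unfold $\IIpO = \Cellk\Hom_{\Sphere}(\Sphp, \II)$. Since $\Typen$ is $\Kp$-cellular as an $\Sphp$-module, $\Hom_{\Sphp}(\Typen, \whatever)$ is insensitive to applying $\Cellk$ to its second argument, so
\[
\Dual{\IIpO}\Typen \weq \Hom_{\Sphp}(\Typen, \Hom_{\Sphere}(\Sphp, \II)) \weq \Hom_{\Sphere}(\Typen, \II).
\]
Thus $\Eph_{-i}\Dual{\IIpO}\Typen = \pi_{-i}\Hom_{\Sphere}(\Typen, \Ep\hatSmash\II)$ after the standard move for finite~$\Typen$. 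The proof then reduces to identifying $\Ep\hatSmash\II$: following~\cite{rS}, $\Ep\hatSmash\II$ is a shifted injective hull of the residue field of $\Ep_0$ in $\Ep$-modules, representing the functor $\Hom_{\Zp}(\Eph_{*}(\whatever), \Zpinfty)$ after a suspension shift of~$n^2$. Substituting this identification into the display above yields the second isomorphism.

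The main obstacle is the identification of $\Ep\hatSmash\II$ as a shifted injective object, particularly the extraction of the precise $n^2$ suspension shift; this is where the distinctive chromatic input of~\cite{rS} and~\cite{rHopkinsGross} really enters. By contrast, the first isomorphism amounts essentially to local duality on a regular Gorenstein ring once the Gorenstein condition has reduced topological duality to its algebraic counterpart, so it is comparatively routine.
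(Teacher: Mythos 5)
Your first isomorphism is argued essentially the way the paper does it: reduce $\Dual\IIpG\Typen$ to Spanier--Whitehead duality over $\Sphp$ using the Gorenstein condition, identify the target with $\Ep^*(\Typen)$, run the universal coefficient spectral sequence, and collapse it using the fact that $\Ep_0$ is regular of dimension~$n$ while $\Eph_*\Typen$ is degreewise of finite length. One small point: the step asserting $\Ep\hatSmash D\Typen\weq\Ep\wedge D\Typen$ is not ``since $\Typen$ is finite'' alone --- it needs the fact that $\Dual\Sn\Typen$ is again finitely built from $\kn$, which the paper deduces from \ref{GHShift} (so that $\Ep\wedge D\Typen$ is already $\kn$-local). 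Finiteness of type $<n$ complexes would \emph{not} give this.

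For the second isomorphism, your reduction $\Dual\IIpO\Typen\weq\Hom(\Typen,\II)$ and the move to $\Hom(\Typen,\Ep\hatSmash\II)$ are correct and in the same spirit as the paper's chain of equivalences \ref{DualityChain}. The genuine gap is the step you flag as ``the main obstacle'': the identification of $\Ep\hatSmash\II$ with a shift of the module corepresenting Pontriagin duality of $\Eph_*$. That statement does not appear in~\cite{rS} in the form you quote; what Strickland proves (and what the paper records as \ref{EtoSKisGor}/\ref{DualOfE}) is that $\DK\Ep=\Hom(\Ep,\SK)\weq\Sigma^{-n^2}\Ep$. To convert this into the statement you want, one also needs that $\IIRhat$ is an invertible $\kn$-local spectrum (\ref{IisInvertible}, from Hovey--Strickland) and the dualizability lemma $\Hom(X,I)\weq\Hom(X,\SK)\hatSmash I$ for invertible $I$ (\ref{InvertibleMeansDualizable}). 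The paper threads these together as
\[
\Hom(\Ep\hatSmash\Typen,\IIRhat)\;\weq\;\Hom(\Typen,\Hom(\Ep,\IIRhat))\;\weq\;\Hom(\Typen,\Sigma^{-n^2}\Ep\hatSmash\IIRhat)\;\weq\;\Sigma^{-n^2}\Ep\hatSmash\Hom(\Typen,\IIRhat),
\]
then reads off the Pontriagin-dual description of the left-hand side via \ref{IIRGivesPontriagin}. Your ``injective hull'' formulation is morally the same thing, but asserting it as a known fact from \cite{rS} hides precisely this combination of ingredients; a complete proof would need to supply it.
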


  Recall \cite{rS} that the Morava stabilizer group $\Gamma$, in one
  of its forms, is a profinite group of multiplicative automorphisms
  of $\Ep$.  The ring $\pi_*\End_{\Sphp}(\Ep)$ is the completed
twisted group ring $\Ep_*\lbbracket\Gamma\rbbracket$ (see \cite[pf. of
Prop.~16]{rS}), and so, up to completion and multiplication by
elements in $\Ep_*$, the operations in $\Ep_*$ are all of degree~$0$
and are determined by the action of elements of~$\Gamma$. If $X$ is a
spectrum, then $\Gamma$ acts on $\Eph_*(X)$ as a group of
automorphisms in the category of skew $\Ep_*$-modules
(\ref{LFunctoriality}). It follows from naturality that the
isomorphisms in \ref{SimpleNaturality} are $\Gamma$-equivariant,
where, for instance, $\Gamma$ acts on $\Ext^n_{\Ep_0}(\Eph_{i-n}\Typen,
\Ep_0)$ in a diagonal way involving actions on all three constituents
of the~$\Ext$. According to \ref{GorAlgebraExamples}, the modules
\[
        \Ext^n_{\Ep_0}(\Eph_i\Typen,\Ep_0) \text{ and }
        \Ext^0_{\Zp}(\Eph_i\Typen,\Ep_0)
\]
are isomorphic for any~$i$; the question is to what extent these
isomorphisms do or do not respect the action of $\Gamma$. 

This is
exactly the issue discussed in \ref{LFunctoriality}.
Given \ref{SimpleNaturality} and \ref{LFunctoriality}, the following
proposition is immediate 
(cf. \ref{TwistOverO}).
Let \[T=W\lbbracket u_1,\ldots,u_{n-1},u_1',\ldots,u_{n-1}'\rbbracket\]
be the evident completion of $\Ep_0\Tensord_W\Ep_0$, and let
$\Line=\Tor_{n-1}^T(\Ep_0,\Ep_0)$. 
It turns out (\ref{SFullDuality}) that $\Line$ is a free
module of rank~$1$ over $\Ep_0$.

\begin{prop}\label{ComplicatedNaturality}
  For any finite complex of type~$n$, there are natural isomorphisms
  \[
        \Line\Tensor_{\Ep_0}\Eph_{i-n-n^2}\Dual\IIpG X \iso \Eph_i\Dual\IIpO
        X\,
  \]
  of modules over $\Ep_0\lbbracket\Gamma\rbbracket$.
\end{prop}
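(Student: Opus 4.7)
The plan is to combine \ref{SimpleNaturality} with the algebraic identification of \ref{LFunctoriality}, the latter now applied to the Gorenstein ring $\Ep_0\cong W\lbbracket u_1,\ldots,u_{n-1}\rbbracket$ in place of $\Zp\lbbracket x_1,\ldots,x_{n-1}\rbbracket$.  Both rings are complete regular local of Krull dimension~$n$ with finite residue field, and the ring $\Line=\Tor_{n-1}^T(\Ep_0,\Ep_0)$ introduced just before the proposition plays exactly the role of the $\Line$ of \ref{LFunctoriality}.  All of the formal ingredients used in the latter are available in the present setting: the existence of a \coinduced/ dualizing module, the freeness of $\Line$ as a rank-one $\Ep_0$-module, and the equivalence $\Sigma^n\Line\tensor_{\Ep_0}\IIpG\weq\IIpO$; these are all recorded uniformly in~\ref{SFullDuality}.

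Granted that, the argument is essentially index bookkeeping.  Re-indexing the two isomorphisms of \ref{SimpleNaturality} by the outer degree and setting $M:=\Eph_{n^2-i}X$ produces the identifications
\[
\Eph_i\Dual{\IIpO}X \iso \Ext^0_{\Zp}(M,\Zpinfty),\qquad \Eph_{i-n-n^2}\Dual{\IIpG}X \iso \Ext^n_{\Ep_0}(M,\Ep_0).
\]
Because $X$ has type~$n$, the $\mfm$-primary torsion hypothesis on $\Eph_*X$ recalled before \ref{SimpleNaturality}, together with finite generation over $\Ep_0$, shows that $M$ is \afinitelength/ $\Ep_0$-module in the sense of~\ref{FiniteLength}.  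Tensoring the second identification with $\Line$ and applying the algebraic comparison
\[
\Line\tensor_{\Ep_0}\Ext^n_{\Ep_0}(M,\Ep_0) \iso \Ext^0_{\Zp}(M,\Zpinfty)
\]
of \ref{LFunctoriality} then produces the asserted isomorphism.

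The one delicate point, and the main obstacle, is the $\Gamma$-equivariance.  Each group $\Eph_*X$ is naturally a skew $\Ep_0$-module for $\Gamma$ via the embedding $\Gamma\hookrightarrow\Aut(\Ep_*)$ recalled above \ref{SimpleNaturality}, and naturality of the isomorphisms of \ref{SimpleNaturality} under \salgebra/ automorphisms of $\Ep$ promotes them to $\Gamma$-equivariant maps of skew $\Ep_0$-modules.  The algebraic comparison of \ref{LFunctoriality} is stated for the whole automorphism group $G=\Aut(\Ep_0)$, so it restricts along the evident map $\Gamma\to G$ to a $\Gamma$-equivariant statement with $\Gamma$ acting diagonally on $\Line\tensor_{\Ep_0}(-)$.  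The nontrivial checkpoint is precisely this last compatibility—verifying that the diagonal $\Gamma$-action inherited from \ref{LFunctoriality} matches the $\Gamma$-action arising topologically on $\Eph_i\Dual{\IIpO}X$—and once it is confirmed the proposition follows.
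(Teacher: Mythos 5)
Your proposal is correct and follows the same route the paper intends: the paper states that \ref{ComplicatedNaturality} is ``immediate'' from \ref{SimpleNaturality} together with \ref{LFunctoriality} (via \ref{TwistOverO}), and you have supplied exactly the missing bookkeeping — the reindexing $j=i-n-n^2$ so that both sides of \ref{SimpleNaturality} have the common inner argument $M=\Eph_{n^2-i}X$, the observation (needed to invoke \ref{TwistOverO}) that $\mfm$-primary torsion plus finite generation makes $M$ a finite length $\Ep_0$-module, and the identification of the $\Gamma$-action through skew $\Ep_0$-homomorphisms as the one delicate compatibility to check. One small slip worth correcting: you list the equivalence $\Sigma^n\Line\Tensor_{\Ep_0}\IIpG\weq\IIpO$ among the ``ingredients recorded in \ref{SFullDuality},'' but what \ref{SFullDuality} records is the purely algebraic equivalence of the two dualizing modules for the ring map $\Ep_0\to k$; the topological analogue for $\IIpG,\IIpO$ is \ref{TwistedDifference}, which is a \emph{consequence} of \ref{ComplicatedNaturality} rather than an input to it — fortunately your actual argument only uses the Ext-level isomorphism of \ref{TwistOverO}, so the proof stands.
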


\begin{rem}
  We emphasize that in \ref{ComplicatedNaturality} the action of
  $\Gamma$ on the left-hand module is diagonal, and involves a
  nontrivial action of $\Gamma$ on $\Line$.
\end{rem}

  This easily leads to the following proposition.

  \begin{prop}\label{TwistedDifference}
    There are natural isomorphisms of
    $\Ep_0\lbbracket\Gamma\rbbracket$-modules 
    \[
    \Line\Tensor_{\Ep_0}\Eph_{i-n-n^2}\IIpG \iso \Eph_i\IIpO\,.
    \] 
  \end{prop}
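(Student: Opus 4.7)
\emph{Plan.} I would deduce Proposition~\ref{TwistedDifference} from Proposition~\ref{ComplicatedNaturality} by substituting a tower of finite type-$n$ complexes for the variable $X$ and passing to an inverse limit. Let $\{M_j\}$ be a cofinal tower of generalized Moore spectra of type~$n$, with $\Eph_* M_j \iso \Ep_*/I_j$ for a cofinal system of $\mfm$-primary ideals $I_j \subset \Ep_0$.  Since $\IIpG$ is effectively constructible from $\Kp$, it is $\Kp$-local, and hence is recovered from the tower of its smash products with the $M_j$:
\[
\IIpG \weq \mathop{\mathrm{holim}}_j \bigl(\IIpG \hatSmash M_j\bigr)\,,
\]
and similarly for $\IIpO$. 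For finite $M_j$ we have $\IIpG \hatSmash M_j \weq \Hom_{\Sphp}(DM_j,\IIpG) = \Dual\IIpG(DM_j)$, where $DM_j$ is the Spanier-Whitehead dual of $M_j$, itself a finite type-$n$ spectrum up to a suspension.

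First I would apply Proposition~\ref{ComplicatedNaturality} with $X=DM_j$ to obtain natural $\Ep_0\lbbracket\Gamma\rbbracket$-module isomorphisms
\[
\Line \Tensor_{\Ep_0} \Eph_{i-n-n^2} \Dual\IIpG(DM_j) \iso \Eph_i \Dual\IIpO(DM_j)\,,
\]
and then pass to the inverse limit over $j$. By Proposition~\ref{SimpleNaturality}, each $\Eph_*$-module appearing here is of finite length over $\Ep_0$, and the transition maps in the tower are surjective, so the Mittag-Leffler condition holds: the Milnor $\lim^1$ terms vanish and $\Eph_*$ commutes with the relevant $\mathop{\mathrm{holim}}$.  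Because $\Line$ is free of rank one over $\Ep_0$ by \ref{SFullDuality}, tensoring with $\Line$ over $\Ep_0$ also commutes with these inverse limits.  Taking $\lim_j$ of the displayed isomorphism therefore yields the desired isomorphism of $\Ep_0\lbbracket\Gamma\rbbracket$-modules.

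The main obstacle is verifying that the limit interchange is compatible with the action of $\Gamma$.  Concretely, one must check that the equivalence $\IIpG \weq \mathop{\mathrm{holim}}_j (\IIpG \hatSmash M_j)$, and its analogue for $\IIpO$, hold equivariantly after smashing with $\Ep$, so that both sides become $\Gamma$-equivariant Milnor-lim diagrams of $\Ep_*$-modules; this follows from the naturality in $X$ of $\Eph_* X = \pi_*(\Ep \hatSmash X)$ together with the standard continuity of $\Eph_*$ on $\Kp$-local spectra whose $\Eph_*$-homology is finitely generated.  Once this is in place, the $\Gamma$-equivariance of the final isomorphism is inherited automatically from the naturality in Proposition~\ref{ComplicatedNaturality}.
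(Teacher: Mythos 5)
Your proposal follows essentially the same route as the paper: both reduce to the case of a cofinal family of generalized Moore spectra of type~$n$ (your $\{M_j\}$, the paper's $\{\Typen_\alpha\}$ with $\Eph_*\Typen_\alpha\iso\Ep_*/J_\alpha$), apply \ref{ComplicatedNaturality} to their Spanier-Whitehead duals, and pass to the inverse limit using the vanishing of $\lim^1$ terms as in \cite[\S4]{rHS}.

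One step as written is false, though easily repaired and not fatal to the argument. You assert that since $\IIpG=\Cell_{\kn}\Sn$ is effectively constructible from $\kn$, it is $\KPP$-local, and hence $\IIpG\weq\mathop{\mathrm{holim}}_j(\IIpG\hatSmash M_j)$. Being in the localizing subcategory generated by $\kn$ does not give $\KPP$-locality (localizing subcategories are closed under colimits, $\KPP$-local spectra under limits); indeed $\Cell_{\kn}\Sn$ is a monochromatic spectrum and is not $\KPP$-local. The correct statement is $\IIRhat=L_{\kn}\IIpG\weq\mathop{\mathrm{holim}}_j(\IIpG\hatSmash M_j)$. This does not affect the conclusion, because the target of the computation is $\Eph_*\IIpG=\pi_*(\Ep\hatSmash\IIpG)=\Eph_*\IIRhat$, and the Milnor sequence applies to $\Ep\hatSmash(-)$ applied to the tower $\{\IIpG\hatSmash M_j\}$; your later remark about checking the interchange \emph{after} smashing with $\Ep$ is in fact where the argument actually lives. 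The same caveat applies to $\IIpO$.
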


  As in \cite{rS}, there is a determinant-like
map $\det:\Gamma\to\Zp^\times$.  If $M$ is \adiscrete/ module over
$\Ep_0\lbbracket\Gamma\rbbracket$ or
$\Ep_*\lbbracket\Gamma\rbbracket$,
write $M[\det]$ for
the module obtained from~$M$ by twisting the action of $\Gamma$ by
$\det$.
The key computation made in \cite{rGHVectorBundles} by Gross and Hopkins (which we do not 
rederive) involves the action of
$\Gamma$ on $\Line$.

\begin{thm}\label{GHTheorem}\cite[Th.~6]{rHopkinsGross}
  As a module over the twisted group ring
  $\Ep_0\lbbracket\Gamma\rbbracket$, $\Line$ is isomorphic to $\Ep_{2n}[\det]$.
\end{thm}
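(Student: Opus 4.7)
The plan is to make $\Line$ explicit via a Koszul resolution, recognise it as the module of top invariant differentials on the Lubin--Tate moduli space, and invoke the formal-group calculation of Gross and Hopkins to identify the resulting twist with $\det$.

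First I would observe that the augmentation $T \to \Ep_0$ sending $u_i$ and $u_i'$ to the common $u_i$ is surjective with kernel $I$ generated by the regular sequence $(u_1 - u_1', \ldots, u_{n-1} - u_{n-1}')$. Since $T$ is a regular local ring, the Koszul complex on this sequence is a free $T$-resolution of $\Ep_0$, and hence
\[
\Line = \Tor_{n-1}^T(\Ep_0, \Ep_0) \iso \Lambda^{n-1}_{\Ep_0}(I/I^2).
\]
Because $T$ is the completion of $\Ep_0\Tensord_W \Ep_0$ at the diagonal, one has $I/I^2 \iso \Omega^1_{\Ep_0/W}$, so $\Line$ is canonically identified with the line bundle $\Lambda^{n-1}_{\Ep_0}\Omega^1_{\Ep_0/W}$ of top invariant differentials on $\operatorname{Spf}\Ep_0/W$. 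This identification already yields the rank~$1$ freeness used in \ref{SFullDuality}, and shows that the factorwise action of $\Gamma$ on $T$ covers the action on $\Ep_0$ and thus induces a semilinear $\Gamma$-action on $\Line$ by the Jacobian determinant of $\gamma$, viewed as an automorphism of $\operatorname{Spf}\Ep_0/W$.

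Next I would introduce the canonical invariant differential to account for the degree shift. The element $u \in \Ep_2$ corresponds to the universal invariant differential $\omega$ on the Lubin--Tate deformation of the Honda formal group of height~$n$, and using $\omega^n = u^n$ to trivialise the relevant line one can regard $\Line$ as $\Ep_{2n}$ twisted by some character $\chi: \Gamma \to \Zp^\times$. The theorem then reduces to the claim $\chi = \det$.

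The main obstacle is precisely this final identification, which is the content of \cite[Th.~6]{rHopkinsGross}. One must simultaneously track how $\gamma \in \Gamma$ permutes the deformation parameters $u_i$, contributing the Jacobian on $\Omega^1_{\Ep_0/W}$, and how $\gamma$ rescales $\omega$, contributing a factor $\gamma'(0)^n$ in the degree-$2n$ piece; the formal-group calculation of Hopkins and Gross is then that the product of these two contributions is exactly the reduced-norm character $\det$ on the Morava stabiliser group~$\Gamma$. This is the delicate step we invoke as a black box rather than rederive.
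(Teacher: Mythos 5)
The paper does not prove this theorem. It is quoted directly from \cite[Th.~6]{rHopkinsGross}, and the sentence immediately preceding it says this is a computation ``which we do not rederive.'' So there is no argument in the paper against which to compare yours.

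That said, your outline is a sound reduction, and it correctly isolates the genuine content. The Koszul identification $\Line\cong\Lambda^{n-1}_{\Ep_0}(I/I^2)\cong\Lambda^{n-1}\Omega^1_{\Ep_0/W}$ is right, and it is essentially the calculation the paper itself performs (in different notation, and only up to rank-one freeness) via ``the usual Koszul resolution'' in the proof of \ref{TwistOverk}, transported to the $W$-coefficient setting of \ref{SFullDuality}. Your reduction to a character $\chi\colon\Gamma\to\Zp^\times$, and your recognition that the identity $\chi=\det$ is exactly the rigid-analytic period map theorem of Gross and Hopkins, are the right framing; both you and the paper take that step as a black box. The one imprecision worth flagging is the phrase ``using $\omega^n=u^n$ to trivialise the relevant line'': this makes the isomorphism $\Line\cong\Ep_{2n}$ sound like a mere choice of generator, whereas the substance of \cite[Th.~6]{rHopkinsGross} is that the period map supplies a \emph{canonical} $\Gamma$-equivariant isomorphism, after a $\det$-twist, between $\Lambda^{n-1}\Omega^1_{\Ep_0/W}$ and the $n$-th tensor power of the invariant-differential line $\omega$, the latter being $\Ep_{2n}$ under the convention $\omega=u$. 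That equivariant identification is the theorem, not a normalization; since you ultimately defer to the reference for it, your sketch is consistent with how the paper uses the result.
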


The first statement below follows from the fact that $\IIpG\to\Sn$ is
a $\kn_*$-equivalence (\ref{TwoKEquivalences}) and hence an
$\Eph_*$-equivalence; the second is a combination of 
\ref{TwistedDifference} and \ref{GHTheorem}.

  \begin{thm}\label{GHDifferent}\cite{rS}
    There are
    isomorphisms of 
    $\Ep_*\lbbracket\Gamma\rbbracket$-modules:
    \[
       \begin{aligned}
           \Eph_*\IIpG&\,\,\iso\,\, \Ep_* \\
           \Eph_*\IIpO\,&\,\,\iso\,\, \Sigma^{n^2-n}\Ep_*[\det]\,.
       \end{aligned}
    \]
  \end{thm}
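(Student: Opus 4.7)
The plan is to handle the two isomorphisms in turn, following the roadmap given just before the statement.

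For the first, by definition $\IIpG = \Cell_{\kn}\Sphp$, and the cellularization counit $\IIpG \to \Sphp$ is a $\kn_*$-equivalence. I would cite \ref{TwoKEquivalences} to promote this to an $\Eph_*$-equivalence; the underlying reason is that $\Ep$ is $\Kp$-local, so $\Ep\hatSmash(-)$ factors through $L_{\Kp}$ and is therefore insensitive to $\kn_*$-equivalences. This gives $\Eph_*\IIpG \iso \Eph_*\Sphp$. Next I would observe that, because $\Ep$ is already $\Kp$-local and $\Sphp$ is the unit of the $L_n$-local category, $\Ep\hatSmash\Sphp \weq \Ep$, whence $\Eph_*\Sphp \iso \Ep_*$. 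Naturality in $\Ep$ ensures that this identification respects the canonical action of $\Gamma$, yielding the first isomorphism in the stated form.

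For the second, I would feed the first isomorphism into \ref{TwistedDifference} to obtain, for each $i$,
\[
\Eph_i\IIpO \iso \Line \Tensor_{\Ep_0} \Ep_{i-n-n^2}
\]
as $\Ep_0\lbbracket\Gamma\rbbracket$-modules, with $\Gamma$ acting diagonally. I would then invoke \ref{GHTheorem} to replace $\Line$ by $\Ep_{2n}[\det]$. Using that $\Ep_{2n} \Tensor_{\Ep_0} \Ep_{i-n-n^2} \iso \Ep_{i+n-n^2}$ as $\Ep_0$-modules and that the $\det$-twist passes through the tensor product, the right-hand side identifies with $\Ep_{i+n-n^2}[\det]$. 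Reassembling over all $i$, and using the convention $(\Sigma^k M)_i = M_{i-k}$, this produces $\Eph_*\IIpO \iso \Sigma^{n^2-n}\Ep_*[\det]$.

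The substantive obstacle is upstream: the genuinely nontrivial content is the Gross--Hopkins calculation of the $\Gamma$-action on $\Line$ (\ref{GHTheorem}), which we import. Once \ref{TwoKEquivalences}, \ref{TwistedDifference}, and \ref{GHTheorem} are in hand the proof itself is bookkeeping, and the only delicate points are tracking the $\Gamma$-equivariance through each identification and confirming that the hypotheses of \ref{TwoKEquivalences} apply to the cellularization counit $\IIpG \to \Sphp$.
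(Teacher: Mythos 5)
Your proof is correct and follows essentially the same route as the paper: the first isomorphism comes from observing that the cellular approximation map $\IIpG\to\Sn$ is a $\kn_*$-equivalence (\ref{TwoKEquivalences}) and hence an $\Eph_*$-equivalence, and the second from substituting the first isomorphism and the Gross--Hopkins computation (\ref{GHTheorem}) into \ref{TwistedDifference}. Your degree bookkeeping and the remark about $\Gamma$-equivariance passing through each identification are the right points to check and they come out as you claim.
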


Finally, we give an analogue of the classification of Brown-Comenetz
dualizing modules from \ref{ListPoincareBC}. Recall that a
$\KPP$-local spectrum $M$ is said to be \emph{invertible} if there is
a $\KPP$-local spectrum $N$ such that $M\hatSmash N\weq L_{\KPP}(\Sphp)$.

\begin{thm}\label{GHClassifyBC}
        Let $\EndE=\End_{\SPP}(\EPP)$.
        Up to equivalence, there are bijective correspondences between
       the following three kinds of objects:
       \begin{enumerate}
         \item invertible $\KPP$-local spectra,
         \item Brown-Comenetz dualizing modules for $\SPP\to \KPP$ \label{BCmodules},
         \item right actions of $\EndE$ on a suspension of $\EPP$ which
           extend the natural right action of $\EPP$ on itself. \label{RightAction}
        \end{enumerate}
\end{thm}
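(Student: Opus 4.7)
The plan is to establish the two bijections $(2)\Leftrightarrow(3)$ and $(1)\Leftrightarrow(2)$ separately, combining the general Morita framework for Brown-Comenetz dualizing modules from \cite[\S6]{rDGI} with the distinguished dualizing module $\IIpG=\Cellk(\Sn)$ provided by Theorem \ref{GHShift}.

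For $(2)\Leftrightarrow(3)$, I would invoke the criterion \cite[6.9]{rDGI} mentioned just after Definition \ref{DBrownComenetz}: the functor $M\mapsto\Hom_{\Sn}(\EPP,M)$, with quasi-inverse $N\mapsto N\otimes_{\EndE}\EPP$, is a Morita equivalence between $\EPP$-cellular $\Sn$-modules and right $\EndE$-modules. Since $\EPP$ and $\kn$ generate the same localizing subcategory of the $K(n)$-local category (one passes from $\EPP$ to $\kn$ by smashing with a finite type-$n$ complex and inverting the appropriate element, while $\EPP$ is itself $\kn$-cellular), a module is $\EPP$-cellular if and only if it is $\kn$-cellular. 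Under this equivalence the Brown-Comenetz condition $\Hom_{\Sn}(\kn,M)\sim\Sigma^d\kn$ transforms into the requirement that $\Hom_{\Sn}(\EPP,M)$ be equivalent as a left $\EPP$-module to a suspension of $\EPP$; the resulting $\EndE$-action is exactly the data in (3), extending the tautological right $\EPP$-action via the natural map $\EPP\to\EndE$.

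For $(1)\Leftrightarrow(2)$, the forward map is $W\mapsto W\hatSmash\IIpG$. I would verify this is Brown-Comenetz dualizing using that, since $W$ is invertible, smashing with $W$ is an autoequivalence of the $\KPP$-local category and commutes up to equivalence with $\Hom_{\Sn}(\kn,\whatever)$; hence $\Hom_{\Sn}(\kn,W\hatSmash\IIpG)\sim W\hatSmash\Sigma^d\kn$, which is again a suspension of $\kn$ by invertibility of $W$. The inverse map would be constructed via the composition $(2)\to(3)\to(1)$: the identification $\pi_*\EndE=\EPP_*\lbbracket\Gamma\rbbracket$ from \cite{rS} reinterprets a right $\EndE$-action on $\Sigma^d\EPP$ extending the natural one as (at the level of homotopy) a continuous character of $\Gamma$ valued in $\EPP_*^\times$, and the Hovey-Strickland classification of $\operatorname{Pic}(\KPP)$ in \cite{rHS} shows that such character data classifies invertible $\KPP$-local spectra up to equivalence.

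The main obstacle is verifying that the action of $\operatorname{Pic}(\KPP)$ on Brown-Comenetz dualizing modules via $W\mapsto W\hatSmash\IIpG$ is simply transitive, i.e., that the dualizing modules form a $\operatorname{Pic}(\KPP)$-torsor based at $\IIpG$. Transitivity reduces, through the Morita equivalence, to the assertion that any two right $\EndE$-structures on $\Sigma^d\EPP$ extending the natural $\EPP$-action differ by a continuous $\Gamma$-character; freeness follows because $\IIpG$ itself corresponds (up to the shift recorded in Theorem \ref{GHDifferent}) to the trivial character and is detected by its $\Eph_*$-homology being free of rank one over $\EPP_*$ with trivial $\Gamma$-action.
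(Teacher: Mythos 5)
Your overall plan---routing through the Morita pairing with $\EPP$ and using the distinguished dualizing module $\IIpG$ as a basepoint---is in the spirit of the paper's argument, but two of the specific steps do not go through as written.

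First, the map you propose for $(1)\to(2)$, $W\mapsto W\hatSmash\IIpG$, does not land in Brown--Comenetz dualizing modules. Recall $\hatSmash$ denotes the $\kn$-local smash product, and the cellular approximation map $\IIpG=\Cellk\Sn\to\Sn$ is a $\kn_*$-equivalence (\ref{TwoKEquivalences}); hence for any $\kn$-local $W$ one has $W\hatSmash\IIpG = L_{\kn}(W\Tensor\IIpG)\weq L_{\kn}(W\Tensor\Sn)\weq W$. So your forward map is the identity and produces a $\kn$-local spectrum, whereas Definition \ref{DBrownComenetz} requires effective constructibility from $\kn$, which forces a Brown--Comenetz dualizing module to be $\kn$-cellular. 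The fix is to drop the $\kn$-localization: the correct map sends $W\mapsto\Cellk W\weq W\Tensor\IIpG$ (ordinary smash), with inverse $X\mapsto\hat X$. Verifying your stated condition $\Hom(\kn,\whatever)\weq\Sigma^d\kn$ is indeed the right criterion on both sides---that is the content of \ref{RecognizeGorenstein} and \ref{BCequalsInvertible}---but you still must check the cellularity half of the definition, which is exactly what $\Cellk$ supplies.

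Second, your $(2)\leftrightarrow(3)$ argument rests on the claim that $\EPP$-cellular and $\kn$-cellular coincide as subcategories of $\Sn$-modules, and in particular that ``$\EPP$ is itself $\kn$-cellular.'' This is false. By \ref{SisFinite}, $\SK$ is finitely built from $\EPP$; if $\EPP$ were $\kn$-cellular then so would $\SK$ be, and then the chain of $\Cell_{\kn}$-equivalences $\IIpG\to\Sn\to\SK$ between $\kn$-cellular objects would force $\IIpG\weq\SK$---contradicting the fact that $\IIpG$ is a nontrivial cellularization of $\Sn$ while $\SK$ is its localization (these are related by localization, not equivalent). The direction in which $\kn$ and $\EPP$ build one another is asymmetric: $\kn$ is finitely built from $\EPP$, but $\EPP$ is not built from $\kn$. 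Consequently the Morita framework of \cite[6.9]{rDGI} does not give a direct bijection between item (2) and item (3). The paper sidesteps this by proving $(1)\leftrightarrow(3)$ rather than $(2)\leftrightarrow(3)$: on the $\kn$-local side one applies $I\mapsto\Hom(\EPP,I)$ and uses that $\EPP$ finitely builds $\SK$ (\ref{InvertibleBijection}), together with the finiteness result \ref{StrangeIsFinite} for right $\EndE$-module structures, to construct the inverse $M\mapsto M\Tensor_{\EndE}\EPP$. Your torsor heuristic for freeness and transitivity is close in spirit to \ref{StrangeIsFinite}, but because it is built on the incorrect identifications above, it also needs to be rerouted through the $\kn$-local side.
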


\begin{rem}\label{GeometricCase}(\GeometricX) 
  Suppose that $X$ is a based CW-complex, $G$ is the loop space on $X$
  (constructed as a simplicial group), and $\EndE=\Sphere[G]$ is the
  ring spectrum obtained as the unreduced suspension spectrum of $G$.
  Let $k=\Sphere$ and $R=C^*(X;k)$ as in \ref{PDDuality}. Say that a
  module $M$ over $R$ is invertible if there is a module $N$ such that
  $M\Tensor_RN\weq R$. Then if $X$
  is finite and $1$-connected, $\EndE$ is equivalent to $\End_R(k)$
\cite{rDGI}, and \ref{GHClassifyBC} becomes in part analogous to the
statement that up to equivalence there are bijective correspondences
between the following four kinds of objects:
\begin{enumerate}
\item invertible modules over $R$,
\item Brown-Comenetz dualizing modules for $R\to k$,
\item actions of $\EndE$ on a suspension of $k$ which (necessarily) extend the action of
$k$ on itself, and
\item stable spherical fibrations over $X$.
\end{enumerate}
\end{rem}

\begin{NumberedSubSection}{Organization of the paper}
Section \ref{CCellular} has a short discussion of cellularity,
\S\ref{CCommutative} expands on some of the algebraic issues discussed
in \ref{LFunctoriality}, and \S\ref{CPreliminaries} recalls some
material from stable homotopy theory. Section \ref{SGrossHopkins}
contains the proofs of \ref{GHShift}, \ref{GHSame},
\ref{SimpleNaturality}, and \ref{TwistedDifference}. The last section
has a proof of \ref{GHClassifyBC}.
\end{NumberedSubSection}

\begin{NumberedSubSection}{More notation}\label{SNotation}
  The fact that $\Sn\Tensor_{\Sphere}\Sn\weq\Sn$ implies that
  if $X$ and $Y$ are $\Sn$-modules then $\Hom_{\Sn}(X,Y)\weq\Hom(X,Y)$
  and $X\Tensor_{\Sn}Y\weq X\Tensor Y$. Whenever possible we use the
  simpler notation (without the subscript \Sn). If
  $X$ is a spectrum, $\hat X=L_{\kn}X$ stands for the \kn-localization
  of $X$; we also write $\DK$ for $D_{\SK}$, so that $\DK
  X=\Hom(X,\SK)$. 

  Our notion of \emph{finite complex of type~$n$} is slightly
  different from that of \cite{rHS}.
  If $F(n)$ is a finite complex of
  type~$n$ in the sense of \cite[\S1.2]{rHS}, then $\Sn\Tensor F(n)$
  is a finite complex of type~$n$ in our sense.
\end{NumberedSubSection}

\begin{SubSection}{Some technicalities}\label{DefineEHS}
  The localized sphere \Sphp/ is a commutative \salgebra, as is the
  spectrum \EPP/  \cite[\S7]{rGoerssHopkins}.
  The spectrum \Kp/ has an essentially unique
  \salgebra/ structure \cite{vigleik} and we will
  work with the essentially unique \salgebra/ map
  $\SPP\to\KPP$.

\end{SubSection}

\end{NumberedSubSection}

\section{Cellularity and Koszul complexes}
\label{CReview}\label{CCellular}

In this section we review the idea of cellularity, and look at how it
fits in with the effective constructibility condition which appears in
the definition of Brown-Comenetz dualizing module.

\begin{NumberedSubSection}{Cellularity and cellular approximation}
Suppose that $R$ is \ansalgebra/ and that $k$ is an
$R$-module.  Recall that a subcategory of the category of $R$-modules
is said to be \emph{thick} if it is closed under (de)suspensions, equivalences,
cofibration sequences, and retracts; it is \emph{localizing} if in
addition it is closed under arbitrary coproduts.

\begin{defn}\label{DDefineCellular}
  An $R$-module is \emph{finitely built from $k$} if it belongs to the
  smallest thick subcategory of $\RMod$ which contains $k$.  An
  $R$-module is \emph{built from $k$} or is \emph{$k$-cellular} if it
  belongs to the smallest localizing subcategory of $\RMod$ which
  contains $k$.  
\end{defn}

\begin{defn}\label{DefineCellEquivalence}
  A map $f:M\to N$ of $R$-modules is a \emph{$\Cellk$-equivalence} if
  it induces an equivalence $\Hom_R(k,M)\weq\Hom_R(k,N)$.
\end{defn}

It is not hard to see that a $\Cellk$-equivalence between $k$-cellular
$R$-modules
is actually an equivalence; this follows for instance
from the fact that a $\Cellk$-equivalence $M\to N$ induces an
equivalence $\Hom_R(C,M)\weq\HomR(C,N)$ for any $k$-cellular~$C$.
The main general result in this area is an
approximation theorem. A map $M'\to M$ is said to be a
\emph{$k$-cellular approximation} if $M'$ is $k$-cellular and $M'\to
M$ is a $\Cellk$-equivalence.

\begin{thm}\label{GiveCellularApprox}\cite[I.5]{rHirschhorn}
  Any $R$-module $M$ has a functorial $k$-cellular approximation
  $\Cellk(M)\to M$. A map $M'\to M$ is a $\Cellk$-equivalence if and
  only if the induced map $\Cellk(M')\to\Cellk(M)$ is an equivalence.
\end{thm}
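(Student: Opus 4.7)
The plan is to apply the transfinite small object argument to the set $I = \{\Sigma^n k \to C(\Sigma^n k)\}_{n \in \Z}$ of inclusions of $k$-spheres into $k$-cones in $\RMod$. Starting from the canonical map $0 \to M$, the standard functorial factorization produces a sequence $0 \to \Cellk(M) \to M$ in which $0 \to \Cellk(M)$ is a transfinite composition of pushouts of coproducts of maps in $I$, while $\Cellk(M) \to M$ has the right lifting property with respect to every map in $I$. By the cone/suspension adjunction, that lifting condition translates exactly into the statement that $\HomR(k,\Cellk(M)) \to \HomR(k,M)$ is an equivalence, i.e., that $\Cellk(M) \to M$ is a $\Cellk$-equivalence in the sense of Definition~\ref{DefineCellEquivalence}. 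Since $\Cellk(M)$ is built from $k$ by iterated cell attachments, it lies in the smallest localizing subcategory of $\RMod$ containing $k$, hence is $k$-cellular per Definition~\ref{DDefineCellular}. Functoriality of $M \mapsto \Cellk(M)$ is inherited directly from the functoriality of the small object argument.

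For the second assertion I would use the input already noted before the theorem, namely that any $\Cellk$-equivalence between $k$-cellular modules is automatically an equivalence (because $\HomR(C,-)$ sends $\Cellk$-equivalences to equivalences whenever $C$ is $k$-cellular). Given $f \colon M' \to M$, functoriality produces a commutative square whose horizontal edges are the cellular approximation maps $\Cellk(M') \to M'$ and $\Cellk(M) \to M$, and whose vertical edges are $\Cellk(f)$ and $f$; the two horizontals are $\Cellk$-equivalences by construction. If $f$ is a $\Cellk$-equivalence, then two-out-of-three for $\Cellk$-equivalences (which is immediate from their definition as maps inducing equivalences after applying $\HomR(k,-)$) forces $\Cellk(f)$ to be a $\Cellk$-equivalence between cellular modules, hence an equivalence. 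Conversely, if $\Cellk(f)$ is an equivalence then it is a fortiori a $\Cellk$-equivalence, and two-out-of-three in the same square now forces $f$ to be one.

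The main obstacle is verifying the smallness hypothesis needed for the small object argument to terminate at a bounded ordinal: one must know that each $\Sigma^n k$ is small with respect to the class of morphisms built from $I$ in $\RMod$. This is standard infrastructure in the model-categorical setups for $R$-modules used in the paper (EKMM or symmetric spectra), and can also be phrased in terms of the presentability of $\RMod$, so the real work amounts to assembling the small object argument correctly, which is precisely what Hirschhorn does in the cited reference.
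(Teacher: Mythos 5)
The paper offers no proof of this statement and simply cites Hirschhorn, so there is no in-text argument to compare against; your sketch correctly reconstructs the small-object-argument proof that underlies the citation, and the second paragraph in particular repeats the observation the paper records just before the theorem (a $\Cellk$-equivalence between $k$-cellular modules is an equivalence) combined with two-out-of-three. One small point of care: the lifting property against $\{\Sigma^n k \to C\Sigma^n k\}$ gives that $\HomR(k,\Cellk(M))\to\HomR(k,M)$ is a trivial fibration in the relevant spectral model structure, not ``exactly'' an equivalence, but this is what you need since a trivial fibration is in particular a weak equivalence, and the passage through the model-category infrastructure (cotensoring, smallness) is exactly the technical work you correctly defer to Hirschhorn.
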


\begin{numbered}{Constructing $\Cellk(M)$}\label{ConstructingCellk}
In general, it is difficult to give a simple formula for $\Cellk(M)$;
the usual method for constructing it involves transfinite induction.
But let $\EC=\EndR(k)$ and note that there is a commutative diagram
\[
\begin{CD}
  \HomR(k,\Cellk M)\Tensor_{\EC} k @>>> \Cellk M\\
       @V\weq VV                  @VVV \\
  \HomR(k,M)\Tensor_{\EC} k @>>> M
\end{CD}
\]
in which the horizontal maps are evaluation.  It is easy to conclude
from this diagram that the following three conditions are equivalent:
\begin{enumerate}
  \item for all $M$, $\HomR(k,M)\Tensor_{\EC} k\to M$ is a $k$-cellular approximation,
  \item for all $M$, $\Cellk M$ is \ec/ from $k$ (\ref{DefineEC}), and
  \item any $k$-cellular $R$-module is \ec/ from $k$.
\end{enumerate}
If these conditions hold, then the functor $\Cellk(\whatever)$ is easy
to describe explicitly: it is equivalent to
$\HomR(k,\whatever)\Tensor_{\EC}k$.
\end{numbered}

We will next identify certain pairs $(R,k)$ for which the
conditions of \ref{ConstructingCellk} are satisfied.
\end{NumberedSubSection}

\begin{NumberedSubSection}{Koszul complexes}\label{DKoszul}
A \emph{Koszul complex} for an $R$-module
$k$ is an $R$-module $C$ which satisfies the following three
conditions:
\begin{enumerate}
\item $C$ is finitely built from $R$,
\item $C$ is finitely built from $k$, and
\item $C$ builds $k$.
\end{enumerate}
If $R\to k$ is a map of \salgebras, a \emph{Koszul complex for $R\to
  k$} is a Koszul complex for $k$ as a left $R$-module. This notion of
Koszul complex is much looser than the one that usually appears in
commutative algebra (e.g. \ref{KoszulComplexExamples}), but it is useful for our purposes. In the
language of \cite{rDGI} and \cite{rDGI2}, the existence of $C$ is equivalent to the
assertion that $k$ is proxy-small over~$R$.

\begin{prop}\label{Proxy} \cite[4.10]{rDGI}
  Suppose that $R$ is \ansalgebra/ and $k$ is an $R$-module which
  admits a Koszul complex $C$.  Then the three conditions of
  \ref{ConstructingCellk} hold for $(R,k)$.
\end{prop}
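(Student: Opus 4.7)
The plan is to verify condition~(1) of~\ref{ConstructingCellk}: that the evaluation map $\nu_M\colon \HomR(k,M)\TensorEC k\to M$ is a $\Cellk$-equivalence for every $R$-module~$M$. The source is manifestly $k$-cellular, so $\nu_M$ will then exhibit a $k$-cellular approximation, and the equivalence of the three conditions in~\ref{ConstructingCellk} finishes the job. The Koszul complex $C$ is used as a bridge between two roles of $k$: because $C$ is finitely built from $R$ it is compact, so $\HomR(C,\whatever)$ commutes with arbitrary homotopy colimits; because $C$ both builds $k$ and is finitely built from $k$, the classes of $k$-cellular and $C$-cellular $R$-modules coincide.

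First I would show that a map $f$ of $R$-modules is a $\Cellk$-equivalence if and only if $\HomR(C,f)$ is an equivalence. Since $C$ is finitely built from $k$, a thick-subcategory argument shows that $\HomR(k,f)$ being an equivalence forces $\HomR(C,f)$ to be one. Conversely, the class of $T$ for which $\HomR(T,f)$ is an equivalence is always closed under arbitrary coproducts, because $\HomR(\bigvee T_\alpha,f)=\prod\HomR(T_\alpha,f)$; it is therefore a localizing subcategory, and since $C$ lies in it and $k$ is built from $C$, so does~$k$.

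With this reduction in hand, the main task is to show $\HomR(C,\nu_M)$ is an equivalence for every~$M$. Writing $\HomR(k,M)\TensorEC k$ as the realization of a two-sided bar construction and using the compactness of $C$ (so that $\HomR(C,\whatever)$ commutes with geometric realization, and with smash products of an external spectrum against an $R$-module) yields a natural equivalence
$$\HomR(C,\HomR(k,M)\TensorEC k)\weq \HomR(k,M)\TensorEC\HomR(C,k),$$
under which $\HomR(C,\nu_M)$ is identified with the natural composition map $\mu_M\colon \HomR(k,M)\TensorEC\HomR(C,k)\to \HomR(C,M)$. I would then argue that $\mu_M$ is an equivalence for every $M$ by a thick-subcategory argument in the variable~$C$: both sides are contravariant exact functors of $C$, the map is the obvious isomorphism when $C=k$ (both sides collapse to $\HomR(k,M)$ via $\HomR(k,k)=\EC$), and our Koszul $C$ lies in the class because it is finitely built from~$k$.

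The main obstacle is the dualisation step $\HomR(C,X\TensorEC k)\weq X\TensorEC\HomR(C,k)$: this is not a formal adjunction, and one must unfold the bar construction and check on each simplicial level that $\HomR(C,Y\wedge Z)\weq Y\wedge\HomR(C,Z)$ when $Y$ is an external spectrum and $Z$ an $R$-module. This in turn is a separate thick-subcategory argument in $C$, which starts from the trivial case $C=R$ and propagates through the fact that $C$ is finitely built from~$R$.
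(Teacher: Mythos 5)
Your proposal is correct and follows essentially the same route as the paper's proof: reduce to showing the evaluation map is a $\Cellk$-equivalence, replace $k$ by the Koszul complex $C$ (using that $C$ builds $k$), commute $\HomR(C,\whatever)$ past the $\TensorEC$ (using that $C$ is finitely built from $R$), and then run a thick-subcategory argument in the dual variable starting from $A=k$ (using that $C$ is finitely built from $k$). The only points where you add detail beyond the paper are the explicit bar-construction unfolding of the commutation step and the converse half of the $\Cellk$-vs-$\Cell_C$ comparison, neither of which is actually needed but neither of which hurts; you also assert the $k$-cellularity of the source as ``manifest'' where the paper pauses to note it holds because $\HomR(k,M)$ is built from $\EC$ as a right $\EC$-module.
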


\begin{proof}
  Let $\EC=\EndR(k)$.
  We will prove that if $M$ is any $R$-module, then the natural map
  $\lambda:\HomR(k,M)\Tensor_{\EC}k\to
    M$
    is a $k$-cellular approximation.  The domain of $\lambda$ is built
    from $k$ over $R$, because $\HomR(k,M)$ is built from $\EC$ as a
    right module over $\EC$, so it will be sufficient to prove that
    $\lambda$ is a $\Cellk$-equivalence. We look for $R$-modules $A$
    with the property that the natural map
  \[   \HomR(k,M)\Tensor_{\EC}\HomR(A,k)\to \HomR(A,M)       \]
  is an equivalence. The module $A=k$ certainly works, and hence so
  does any module finitely built from $k$, e.g., the Koszul complex
  $C$. Since $C$ is finitely built from $R$,
  $\HomR(k,M)\Tensor_{\EC}\HomR(C,k)$ is equivalent to
  $\HomR(C,\HomR(k,M)\Tensor_{\EC}k)$.  The conclusion is that the map
  $\lambda$ is a $\Cell_C$-equivalence. Since $C$ builds $k$, it follows
  that the map is also a $\Cellk$-equivalence.
\end{proof}

In the presence of a Koszul complex, it is easier to recognize
Gorenstein homomorphisms.

\begin{prop}\label{RecognizeGorenstein} \cite[8.4]{rDGI}
  Suppose that $R\to k$ is a map of \salgebras/ such that $k$, as an
  $R$-module, admits a Koszul complex. Then $R\to k$ is Gorenstein if
  and only if there is some integer $d$ such that $\Hom_R(k,R)$ is
  equivalent to $\Sigma^dk$ as a module over $k$.
\end{prop}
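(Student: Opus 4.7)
The plan is to chase the definitions, using the Koszul complex hypothesis exactly once (to secure effective constructibility). There is not much more to do, because by the definition of ``Gorenstein'' in \ref{DefineGorenstein}, the claim is essentially that the two conditions making up ``Brown-Comenetz dualizing module'' split cleanly when applied to $\Cellk(R)$.

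For the forward direction, assume $R\to k$ is Gorenstein, so $\Cellk(R)$ is a Brown-Comenetz dualizing module. By \ref{DBrownComenetz} we then have $\Hom_R(k,\Cellk(R))\weq \Sigma^d k$ for some~$d$. Since $\Cellk(R)\to R$ is by construction a $\Cellk$-equivalence (\ref{DefineCellEquivalence}, \ref{GiveCellularApprox}), it induces an equivalence $\Hom_R(k,\Cellk(R))\weq \Hom_R(k,R)$, and hence $\Hom_R(k,R)\weq\Sigma^d k$ as left $k$-modules.

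For the converse, assume $\Hom_R(k,R)\weq\Sigma^d k$ for some~$d$, and let $\EC=\EndR(k)$. We need to check that $\Cellk(R)$ fulfils the two requirements of \ref{DBrownComenetz}. First, since $k$ admits a Koszul complex, Proposition \ref{Proxy} applies: the conditions of \ref{ConstructingCellk} hold for $(R,k)$, and in particular every $k$-cellular $R$-module is effectively constructible from~$k$; so $\Cellk(R)$ is \ec/ from~$k$. Second, the same $\Cellk$-equivalence $\Cellk(R)\to R$ gives $\Hom_R(k,\Cellk(R))\weq\Hom_R(k,R)\weq \Sigma^d k$. Both conditions of \ref{DBrownComenetz} are satisfied (shifting the suspension index by the appropriate sign to arrange $d\ge 0$ if one prefers to read the definition strictly), so $R\to k$ is Gorenstein.

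No step seems genuinely difficult; the only place where something could go wrong is in the converse, where without the Koszul hypothesis we could not conclude that $\Cellk(R)$ is effectively constructible from~$k$. Appealing to \ref{Proxy} is thus the pivotal move, and the rest is bookkeeping around the fact that cellular approximation preserves $\Hom_R(k,\whatever)$.
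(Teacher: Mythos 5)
Your proof is correct and follows essentially the same route as the paper's: both use \ref{Proxy} to guarantee that $\Cellk(R)$ is effectively constructible from $k$, and both use the fact that the cellular approximation map $\Cellk(R)\to R$ induces an equivalence on $\Hom_R(k,\whatever)$ to move the condition from $\Cellk(R)$ to $R$. The paper phrases this as a single biconditional rather than two separate directions, but the content is identical; your parenthetical worry about $d\ge 0$ is a non-issue (the sign convention in \ref{DBrownComenetz} is not meant to be restrictive, as the paper's own Gorenstein examples with negative shifts make clear).
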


\begin{proof}
  Since $\Cellk(R)$ is effectively constructible from $k$
  (\ref{Proxy}), the map $R\to k$ is Gorenstein if and only if there
  is some integer $d$ such that $\Hom_R(k,\Cellk(R))$ is equivalent to
  $\Sigma^dk$ as a $k$-module. The proposition follows from the fact
  that the cellular approximation map $\Cellk(R)\to R$ induces an
  equivalence on $\HomR(k,\whatever)$.
\end{proof}

\end{NumberedSubSection}

\begin{NumberedSubSection}{Examples of Koszul complexes}\label{KoszulComplexExamples}

  Suppose that $R$ is %
  an ordinary %\adiscrete/ 
  commutative ring and that $k$ is a
  field which is a quotient of $R$ by a finitely generated ideal
  $\langle r_1,\ldots,r_m\rangle$.  Let $C_i$ denote the complex
  $R\RightArrow{r_i}R$ (concentrated in degrees $0$ and $-1$), and $C$
  the complex $C_1\Tensor_R\cdots\Tensor_RC_m$. This is what is
  usually called the Koszul complex for $R\to k$; the following
  shows that definition \ref{DKoszul} is consistent with this usage.

\begin{prop}\label{AlgebraicKoszul} \cite[3.2]{rDGI}
  In the above situation, $C$ is a Koszul complex for $R\to k$ (in the
  sense of \ref{DKoszul}).
\end{prop}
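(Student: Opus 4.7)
The plan is to verify the three defining conditions of Definition~\ref{DKoszul} in turn.

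For condition (1), each $C_i$ sits in a cofibre sequence $C_i\to R\xrightarrow{r_i}R$, and is therefore finitely built from $R$; since the iterated smash product over $R$ of finitely many modules finitely built from $R$ is again finitely built from $R$, so is $C$.

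The observation that drives both remaining conditions is that the chain-level differential on $C_i$ is multiplication by $r_i$, so $r_i$ is null-homotopic on $C_i$, and hence null-homotopic on $C$. Consequently every $r_i$ acts as zero on $\pi_*(C)$, making $\pi_*(C)$ a graded module over $k=R/\langle r_1,\ldots,r_m\rangle$. Because $C$ is a bounded complex of finitely generated free $R$-modules, each $\pi_i(C)$ is a finite-dimensional $k$-vector space, hence a finite coproduct of copies of $k$ when regarded as an $R$-module. Condition (2) then follows from the Postnikov tower of $C$: it presents $C$ as a finite iterated extension of the shifts $\Sigma^i\pi_i(C)$, each of which is finitely built from $k$, and finite buildability from $k$ is preserved by shifts, extensions, and retracts.

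The main obstacle is condition (3), that $C$ builds $k$. The approach I would take is to identify $\loc{C}$ inside $\RMod$ with the full subcategory of modules whose homotopy is $\mathfrak{a}$-power torsion, where $\mathfrak{a}=\langle r_1,\ldots,r_m\rangle$. The inclusion $\loc{C}\subseteq\{\mathfrak{a}\text{-power-torsion modules}\}$ is immediate from the null-homotopy observation above, together with the fact that the $\mathfrak{a}$-power-torsion subcategory is closed under coproducts, shifts, retracts, and cofibres. The reverse inclusion is the hard direction: I would prove it by relating $C$ to the stable Koszul (telescope) presentation of the local-cohomology functor $R\Gamma_{\mathfrak{a}}(-)$, showing that $C$ is a compact generator of the $\mathfrak{a}$-torsion localizing subcategory. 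Once this is established, $k=R/\mathfrak{a}$ is annihilated by $\mathfrak{a}$ itself, so $k$ is manifestly $\mathfrak{a}$-power torsion, giving $k\in\loc{C}$ and completing (3).
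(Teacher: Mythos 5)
The paper offers no proof of its own here---it simply cites \cite[3.2]{rDGI}---so there is no internal argument to compare against, and I'll evaluate your proposal on its merits.

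Your treatment of conditions (1) and (2) is correct. The observation that $r_i$ is chain-null-homotopic on $C_i$, hence on $C$, the finite dimensionality of each $\pi_i(C)$ over $k$, and the bounded Postnikov tower all go through exactly as you describe.

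For condition (3) you take a considerably heavier route than is needed. You propose to identify $\loc{C}$ with the full $\mathfrak{a}$-power-torsion localizing subcategory, where $\mathfrak{a}=\langle r_1,\ldots,r_m\rangle$. The containment $\loc{C}\subseteq\{\mathfrak{a}\text{-power torsion}\}$ is routine, but the reverse inclusion---which is the one you actually need---is in effect the main content of derived local cohomology \`a la Dwyer--Greenlees, and you leave it as a sketch. A much shorter, self-contained argument is available and makes no appeal to local cohomology or compact generation. Since each $r_i$ acts as zero on $k$, the defining two-term exact triangle of $C_i$ tensored with $k$ over $R$ splits, so $C_i\Tensor_R k$ is a sum of two shifts of $k$; in particular $k$ is a retract of $C_i\Tensor_R k$, and iterating this over $i=1,\ldots,m$ exhibits $k$ as a retract of $C\Tensor_R k$. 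On the other hand $C\Tensor_R k$ lies in $\loc{C}$, because the class of $R$-modules $M$ with $C\Tensor_R M\in\loc{C}$ is a localizing subcategory of $\RMod$ that contains $R$, and hence is all of $\RMod$. Therefore $k\in\loc{C}$, establishing (3) in a few lines. I'd recommend replacing your torsion-subcategory machinery with this retract argument; besides being shorter, it avoids leaving a nontrivial theorem unproved.
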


Recall that $\SKn$ is the localization of $\Sph$ with respect to the
Morava $K$-theory $K(n)$, and that $\Ep$ is as in~\ref{MeaningOfE}. The unit map $\Sph\to \Ep$ extends
uniquely to an \salgebra/ map $\SKn\to E$.

\begin{prop} \label{SisFinite}
 The spectrum $\SK$ is a Koszul complex for $\SK\to \Ep$. 
\end{prop}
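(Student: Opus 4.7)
Of the three conditions in Definition \ref{DKoszul} that need to be verified for $C = R = \SK$ and $k = \Ep$, two are immediate. Condition (1), that $\SK$ is finitely built from itself, is tautological. Condition (3), that $\SK$ builds $\Ep$, holds because $\SK$ is the unit of the smash product on ${}_{\SK}\Mod$, so every $\SK$-module lies in the localizing subcategory it generates. The entire content of the proposition is therefore condition (2): $\SK$ is finitely built from $\Ep$ inside ${}_{\SK}\Mod$.

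My plan for (2) is to exploit the fact that in the $\kn$-local category, $\Ep$ is essentially a pro-Galois extension of $\SK$. By work of Devinatz--Hopkins and Goerss--Hopkins--Miller, the unit $\SK \to \Ep$ identifies $\SK$ with the homotopy fixed-point spectrum $\Ep^{h\Gamma}$ for the extended Morava stabilizer group $\Gamma$; equivalently, $\SK$ is the totalization of the $\kn$-local Amitsur cobar complex $\Ep \rightrightarrows \Ep \hatSmash \Ep \rightrightarrows \cdots$. Each term $\Ep^{\hatSmash s}$ is visibly finitely built from $\Ep$ as an $\SK$-module, so condition (2) reduces to showing that this cobar totalization lies in the thick subcategory of ${}_{\SK}\Mod$ generated by $\Ep$, i.e.\ that the relevant cell tower terminates at a finite stage.

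The main obstacle is that for small primes $p$, the group $\Gamma$ contains $p$-torsion and thus has infinite mod-$p$ cohomological dimension. My strategy for circumventing this is to first smash with a finite type-$n$ complex $V$: then the $E_2$-term of the cobar spectral sequence is the continuous cohomology $H^*_c(\Gamma;\Ep_*V)$, and because $\Ep_*V$ is a finite-length $\Ep_0$-module, the continuous $\Gamma$-action factors through a quotient with finite virtual cohomological dimension. The spectral sequence consequently has a horizontal vanishing line, so $\SK \hatSmash V$ is finitely built from $\Ep \hatSmash V$, and hence from $\Ep$, in ${}_{\SK}\Mod$.

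The final and hardest step is to remove the auxiliary complex $V$. Here I would observe that the class of $\SK$-modules finitely built from $\Ep$ in ${}_{\SK}\Mod$ forms a thick subcategory, and use Spanier--Whitehead duality for the finite complex $V$ together with a $v_n$-self map of $V$ to place $\SK$ itself inside this thick subcategory. The technical tools for this last step are essentially those developed in \cite{rS} and \cite{rHS}, on which the present paper relies heavily.
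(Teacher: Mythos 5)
Your identification of condition (2), that $\SK$ is finitely built from $\Ep$ in ${}_{\SK}\Mod$, as the substantive content is correct; conditions (1) and (3) are immediate, and the paper handles them the same way. The paper's proof of (2) is simply a citation to Hovey--Strickland \cite[8.9]{rHS}, together with a warning about the notational mismatch between their spectrum $E$ and the present $\Ep$.

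Your attempt to reconstruct that result via descent along $\SK\to\Ep$ is in the right spirit, but the final step---removing the auxiliary finite type-$n$ complex $V$---does not work, and this is a genuine gap. Smashing with $V$ and invoking a vanishing line at most proves that $\Lkn V = \SK\hatSmash V$ is finitely built from $\Ep$; no thick-subcategory argument can then deliver $\SK$ itself, because $\SK$ is \emph{not} finitely built from $\Lkn V$. Indeed, $V$ is a finite spectrum of type $\geq 1$, so $\mathrm{id}_V$ lies in the finite $p$-group $\pi_0\Hom(V,V)$ and is therefore $p$-power torsion; this property passes to every object of the thick subcategory of ${}_{\SK}\Mod$ generated by $\Lkn V$, each of which is annihilated by some power of $p$. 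But $p^N\cdot\mathrm{id}_{\SK}\neq 0$ for every $N$, since $\pi_0\SK$ contains $\Zp$. Neither Spanier--Whitehead duality nor a $v_n$-self map helps: a $v_n$-self map of $V$ is already a $K(n)$-local equivalence, so it carries no new information once one has passed to $\Lkn V$.

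What Hovey--Strickland actually establish is a uniform horizontal vanishing line for the $K(n)$-local $\Ep$-based Adams tower of $\SK$ itself, without first smashing with $V$. This is more delicate than a cohomological-dimension bound: $H^*_c(\Gamma;\Ep_*)$ can be nonzero in arbitrarily high degrees when $\Gamma$ has $p$-torsion, and the vanishing line appears only at a later page of the spectral sequence, via the nilpotence technology of \cite{rHS}. Your instinct to smash with a finite complex in order to get finite coefficients is a standard move, but here it proves a strictly weaker statement than the one the proposition requires.
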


\begin{proof}
  It follows from \cite[8.9, p.~48]{rHS}, that $\SK$ is finitely built
  from  \En/  (but  don't  ignore   the notational   discrepancy
  described in the proof of  \ref{InvertibleMeansNiceK}~below). It is  clear
  that  $\SK$   finitely  builds  itself,  and,   since  $\Ep$   is an
  $\SK$-module, that $\SK$ builds \Ep.
\end{proof}

  Let \Typen/ be a fixed finite complex of type~$n$ (\ref{SNotation}).

\begin{prop}\label{knIsKoszul}
  The $\Sn$-module $\Typen$ is a Koszul complex for $\Sn\to\kn$.
\end{prop}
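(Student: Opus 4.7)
My plan is to verify the three defining conditions of a Koszul complex (\ref{DKoszul}) for the $\Sn$-module $\Typen$ with target~$\kn$.

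Condition (1), that $\Typen$ is finitely built from $\Sn$, is immediate from the definition of a finite complex of type~$n$ given in~\ref{SNotation}.

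For condition (3), that $\Typen$ builds $\kn$: I would use that $\kn\Tensor\Typen$ is a finite wedge of suspensions of~$\kn$ (because $K(n)_*\Typen$ is finite-dimensional over the graded field $K(n)_*$), so up to suspension $\kn$ is a retract of $\kn\Tensor\Typen$. The desired statement then reduces to knowing that $\kn\Tensor\Typen$ lies in the localizing subcategory of $\Sn$-modules generated by~$\Typen$ --- a step parallel to the one in the proof of~\ref{SisFinite}, where the fact that $\En$ is an $\SK$-module gives that $\SK$ builds~$\En$, and which follows from the compactness of $\Typen$ together with the identification $\langle\Typen\rangle=\langle\kn\rangle$ of Bousfield classes in the $L_n$-local stable category.

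The substantive step is condition~(2), that $\Typen$ is finitely built from~$\kn$. As above, $\kn\Tensor\Typen$ is a finite wedge of suspensions of~$\kn$, hence lies in the thick subcategory of $\Sn$-modules generated by~$\kn$. To propagate this to $\Typen$ itself, I would use the $\kn$-based Adams tower: set $\Typen^{(0)}=\Typen$ and inductively let $\Typen^{(i+1)}$ be the fibre of the unit map $\Typen^{(i)}\to\kn\Tensor\Typen^{(i)}$; each $\kn\Tensor\Typen^{(i)}$ is then again a finite wedge of suspensions of~$\kn$, and the cofibre sequences $\Typen^{(i+1)}\to\Typen^{(i)}\to\kn\Tensor\Typen^{(i)}$ iteratively place $\Typen$ in the thick subcategory generated by~$\kn$, provided the tower terminates at some finite stage.

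The main obstacle is precisely this finite termination. It is a genuinely chromatic input, the $\kn$-analogue of the statement invoked in~\ref{SisFinite} (where $\SK$ is shown to be finitely built from $\En$ via \cite[8.9]{rHS}); I would expect to extract the needed finiteness from the machinery of \cite{rHS}, using that $\Typen$ is both finite and $\kn$-local (the latter because $K(i)_*\Typen=0$ for $i<n$ forces $\Typen\weq\Lkn\Typen$). Once termination is in place, condition~(2) follows automatically from the cofibre sequences of the tower.
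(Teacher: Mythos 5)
Your handling of conditions (1) and (3) is correct and close to the paper's: (1) is definitional, and for (3) the paper likewise notes that $\kn\Tensor\Typen$ is a nontrivial sum of copies of $\kn$, so $\kn$ is a retract, hence built from $\Typen$. (Your detour through Bousfield classes is unnecessary: $\kn$ is built from $\Sn$ in ${}_{\Sn}\Mod$ since $\Sn$ generates, so $\kn\Tensor_{\Sn}\Typen$ is built from $\Sn\Tensor_{\Sn}\Typen\weq\Typen$; then take the retract.)

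For condition (2), you correctly identify that this is where all the substance lives, but your proof is not complete, and you say so yourself: the finite termination of the $\kn$-based Adams tower is precisely the theorem, and you do not establish it. The cofibre sequences in the tower give nothing until termination is in hand. The paper does not re-prove this either --- it simply cites \cite[8.12]{rHS}, which is exactly the statement that a finite $\kn$-local type~$n$ complex lies in the thick subcategory generated by $\kn$. You point at the right source (the machinery of \cite{rHS}), but what you should do is cite that proposition directly rather than sketch a tower argument whose crucial finiteness input is exactly the content being cited. As written, the argument for condition~(2) has a genuine gap.
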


\begin{proof}
  By construction, $\Typen$ is finitely built from \Sn. Since
  $\kn\Tensor\Typen$ is a nontrivial sum of copies of \kn, it is clear
  that \Typen/ builds \kn. Finally, \cite[8.12]{rHS} shows that \Typen/ is
  finitely built from \kn.
\end{proof}

\begin{prop}
  Let $\EndE$ denote the endomorphism spectrum $\End(\En)$. Then $\En$
  is a Koszul complex for itself as a module over $\EndE$.
\end{prop}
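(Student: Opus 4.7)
The plan is to check the three conditions of Definition~\ref{DKoszul} for $(R,k,C) = (\EndE,\En,\En)$. Since $C=k$, conditions (2) and (3) are tautological: $\En$ trivially finitely builds itself and builds itself. The substantive condition is (1): $\En$ is finitely built from $\EndE$ in ${}_{\EndE}\Mod$.

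For (1), I plan to invoke the Schwede--Shipley Morita picture. Because $\EndE = \End(\En)$ is by definition the derived endomorphism ring of $\En$, the functor $\Hom(\En,-)\colon\Sp\to\Mod_{\EndE}$ sends $\En$ to $\Hom(\En,\En) = \EndE$, and (restricted to appropriate compact subcategories) is an equivalence with quasi-inverse $-\TensorE\En$. Transporting $\En$ across this correspondence identifies $\En$, viewed as an $\EndE$-module, with the free $\EndE$-module $\EndE$, which trivially lies in $\operatorname{thick}(\EndE)$.

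The main obstacle I anticipate is verifying carefully that the $\EndE$-module structure on $\En$ coming from this identification agrees with the natural evaluation action $\EndE\otimes\En\to\En$, and handling the distinction between left and right $\EndE$-modules consistently with the convention in the paper. As a direct alternative, one can exploit the unit $u\colon\SP\to\En$ to produce a left $\EndE$-module map
\[
        \rho\colon\EndE = \Hom(\En,\En) \longrightarrow \Hom(\SP,\En) = \En, \qquad f\mapsto f\circ u,
\]
and either show $\rho$ is an equivalence (equivalent to the vanishing of $\Hom(\En/\SP,\En)$) or else construct a finite cell decomposition of $\En$ over $\EndE$ extending $\rho$. Either route should yield (1), completing the proof.
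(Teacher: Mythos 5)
Your reduction is correct: with $(R,k,C)=(\EndE,\En,\En)$, conditions~(2) and~(3) of \ref{DKoszul} are tautological, and everything hinges on~(1), that $\En$ is finitely built from $\EndE$ as a left $\EndE$-module. Unfortunately both of your routes to~(1) have a real gap.

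Your Option~A conflates $\En$ as a \emph{spectrum} (an object in the source of the Morita functor $\Hom(\En,-)$) with $\En$ as an \emph{$\EndE$-module} (an object in the target, via the evaluation action). The Schwede--Shipley correspondence does send the spectrum $\En$ to the free module $\EndE$; but that only tells you, after unwinding, that $\EndE$ is finitely built from $\EndE$, or equivalently that $\En$ finitely builds $\En$ as a spectrum --- both trivial. It does not say anything about where the evaluation $\EndE$-module $\En$ sits in $\Mod_{\EndE}$. Indeed the claim that ``$\En$ viewed as an $\EndE$-module is the free module $\EndE$'' is false on the nose: $\pi_*\EndE \cong \En_*\lbbracket\Gamma\rbbracket$ while $\pi_*\En = \En_*$, so $\En$ and $\EndE$ are not even equivalent as spectra. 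The same homotopy-group computation shows the map $\rho\colon\EndE\to\En$ in your Option~B is nowhere near an equivalence, and the fallback ``construct a finite cell decomposition of $\En$ over $\EndE$'' is precisely what needs to be proved, with no mechanism offered for producing it.

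The missing idea, which is the whole content of the paper's one-line proof, is to run the argument \emph{contravariantly} starting from \ref{SisFinite}: $\SK$ is finitely built from $\En$ (Hovey--Strickland, \cite[8.9]{rHS}). Apply $\Hom(-,\En)$, which converts cofibration sequences, retracts, and finite coproducts in the source into the corresponding structures among left $\EndE$-modules (the $\EndE$-action comes from the target). This sends $\En\mapsto\EndE$ and $\SK\mapsto\Hom(\SK,\En)\weq\En$ (the latter equivalence because $\En$ is $\kn$-local, so $\Sphere\to\SK$ induces an equivalence on $\Hom(-,\En)$; one checks readily that the resulting $\EndE$-action on $\Hom(\SK,\En)$ agrees with the evaluation action on $\En$). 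Hence $\En$ is finitely built from $\EndE$ as a left $\EndE$-module, which is exactly condition~(1).
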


\begin{proof}
  As above, \SK/ is finitely built from \En.  It follows immediately
  that $\En=\Hom(\SK, \En)$ is finitely built from
  $\EndE=\Hom(\En,\En)$ as a left module over \EndE.
\end{proof}

\end{NumberedSubSection}

\begin{NumberedSubSection}{Self-dual Koszul complexes}
Suppose that $R$ is a commutative \salgebra. A module $M$ over $R$ is
said to be \emph{self-dual with respect to
Spanier-Whitehead duality} if  there is some integer $e$ such that
$\HomR(M,R)$ is equivalent to $\Sigma^eM$ as an $R$-module. The
following observation is less specialized than it seems.

\begin{prop}
  Suppose that $R$ is a commutative \salgebra, and that $k$ is an
  $R$-module which admits a Koszul complex with is self-dual with
  respect to Spanier-Whitehead duality. Then a map $f:M\to M'$ of
  $R$-modules is a $\Cellk$-equivalence if and only if it induces an
  equivalence $k\Tensor_RM\to k\Tensor_RM'$.
\end{prop}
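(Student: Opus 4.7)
The plan is to transport the $\Cellk$-equivalence condition to a self-dual Koszul complex $C$ for~$k$, use self-duality to rewrite $\HomR(C,\whatever)$ as a smash product $C\TensorR\whatever$, and then transport back from $C$ to~$k$. The first step is nearly formal: the classes of $\Cellk$-equivalences and $\Cell_C$-equivalences coincide. Indeed, because $C$ is finitely built from~$k$, every $\Cellk$-equivalence is a $\Cell_C$-equivalence, and because $C$ builds~$k$, the converse implication holds as well (compare the end of the proof of~\ref{Proxy}).

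For the second step, observe that since $C$ is finitely built from~$R$, the natural evaluation map
\[
   \HomR(C,R)\TensorR M \to \HomR(C,M)
\]
is an equivalence for every $R$-module~$M$; both sides are exact functors of $C\in\RMod$ which agree at $C=R$, so they agree on the thick subcategory generated by~$R$. Combined with the self-duality hypothesis $\HomR(C,R)\weq\Sigma^{e}C$, this yields a natural equivalence $\HomR(C,M)\weq\Sigma^{e} C\TensorR M$, and hence $f$ is a $\Cell_C$-equivalence if and only if $C\TensorR f$ is an equivalence. To finish, let $\mathcal{T}$ be the class of $R$-modules~$X$ such that $X\TensorR f$ is an equivalence; since $\whatever\TensorR f$ is exact and preserves arbitrary coproducts, $\mathcal{T}$ is a localizing subcategory of $\RMod$. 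Because $C$ is finitely built from~$k$ and $C$ builds~$k$, the class $\mathcal{T}$ contains~$C$ if and only if it contains~$k$. Chaining the three equivalences completes the argument.

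The only place where something nontrivial happens is the identification $\HomR(C,M)\weq\Sigma^{e} C\TensorR M$: without the self-duality hypothesis the $\Cell_C$-equivalences would be controlled by the $R$-dual $\Dual{R}C$, which need not lie in the localizing subcategory of $\RMod$ generated by~$k$. Self-duality is therefore precisely what is needed to detect $\Cellk$-equivalences by smashing with~$k$ over~$R$.
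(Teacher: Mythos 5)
Your proof is correct and follows essentially the same route as the paper's: pass between $k$ and the self-dual Koszul complex $C$ using the fact that each builds the other, and use $C$ being finitely built from $R$ together with self-duality to convert $\HomR(C,\whatever)$ into $\Sigma^e C\TensorR\whatever$. You merely make explicit the localizing-subcategory bookkeeping and the evaluation-map step that the paper leaves implicit.
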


\begin{proof}
  Let $C$ be the self-dual Koszul complex. Since $C$ and $k$ build one
  another, $f$ induces an equivalence on $\Hom_R(k,\whatever)$
  (i.e., is a $\Cellk$-equivalence)  if and only if it induces an
  equivalence on $\Hom_R(C,\whatever)$.
  Moreover, $f$ induces an equivalence on
  $k\Tensor_R\whatever$ if and only if it induces an equivalence on
  $C\Tensor_R\whatever$. Since $C$ is finitely built from $R$, the
  functor $\HomR(C,\whatever)$ is equivalent to
  $\HomR(C,R)\Tensor_R\whatever$. The proposition follows from the
  fact that $\HomR(C,R)$ is equivalent to $\Sigma^eC$.
\end{proof}

\begin{ex}
  The Koszul complex from \ref{AlgebraicKoszul} is self-dual; compare
  \cite[6.5]{rDwGr}. 
\end{ex}

\begin{ex}\label{TwoKEquivalences}
  The Koszul complex \Typen/ from \ref{knIsKoszul} can be chosen to be
  self-dual; just replace \Typen/ if necessary by
  $\Typen\Tensor_{\Sn}\Dual\Sn\Typen$.  It follows that a map of
  $\Sn$-modules is a $\Cell_{\kn}$-equivalence if and only if it induces an
  isomorphism on $\kn_*$. In particular, for any \Sn-module $X$ the
  map $X\to L_{\kn}X$ is a $\Cell_{\kn}$-equivalence and the map
  $\Cell_{\kn}X\to X$ is a $\kn_*$-equivalence.
\end{ex}

\begin{rem}
  Suppose that $R$ is commutative and that $C$ is a Koszul complex for the $R$-module $k$. The
  above example suggests trying out $C\Tensor_R\Hom_R(C,R)$ as a
  self-dual Koszul complex. This always satisfies (1) and (2) of
  \ref{DKoszul}, but in general does not necessarily satisfy~(3). 
\end{rem}
  
\end{NumberedSubSection}

\section{Commutative Rings}\label{CCommutative}

In this section we will look at several examples of Gorenstein
homo\-mor\-phisms $R\to k$ between ordinary noetherian commutative
rings. In each case $R$ is a regular ring \cite[Section~19]{Matsumura}, and $R\to k$ is projection
to a residue  field. In this situation $R\to k$ is Gorenstein
\cite[Theorem~8.1]{Matsumura}, i.e., 
$\Ext^i_R(k,R)$ vanishes except in one degree, and in that
degree is isomorphic to $k$ (\ref{RecognizeGorenstein},
\ref{AlgebraicKoszul}). Indeed, to see this localize $R$ if necessary at the maximal ideal
$\mfm=\ker(R\to k)$ and observe that the usual algebraic Koszul complex on a minimal
generating set for $\mfm$ is a resolution of~$k$. It is then clear
from calculation that $\Ext^*_R(k,R)$ is isomorphic to (a shift of)~$k$. There are three examples; the third is a
combination of the first two. We give special attention to the
extended functoriality issues discussed in \ref{LFunctoriality}. In
this section we sketch arguments which explain where the results come from;
these issues are treated in \cite{rHuang} from a very different point
of view.

\begin{NumberedSubSection}{$p$-adic number rings}\label{padicGor}
  Let $R$ be the ring $\Zp$ of $p$-adic integers, and $k$ the finite
  field $R/pR\cong\Z/p$. The ring $R$ is regular, hence $R\to k$ is
  Gorenstein
  and there is a Brown-Comenetz dualizing module $\IIRG=\Cellk(R)$
  provided by the Gorenstein condition.
If $M$ is a finitely generated $p$-primary torsion abelian group, the
associated notion of duality is given by
\[
\Dual\IIRG(M)\weq\Sigma^{-1}\Ext^{1}_R(M,R)\,.
\]
The $\Ext$-group on the right is naturally isomorphic to the
Pontriagin dual of $M$, and in fact the short exact sequence
\[
  0\to \Zp\to\Qp\to \Z/p^\infty\to 0
\]
can be used to produce an equivalence
$\IIRG\weq\Sigma^{-1}\Z/p^\infty$.  All extended naturality issues
(\ref{LFunctoriality}) are trivial, if only because $R$ has no nontrivial
automorphisms. In this case Gorenstein duality and Pontriagin duality
coincide (up to suspension) on the category of \finitelength/ (\ref{FiniteLength}) skew $R$-modules.

A more interesting possibility is to let $R$ be the ring of integers
in a finite unramified extension field of \Qp, and $k$ the residue
field of $R$. Again $R$ is regular, $R\to k$
is Gorenstein, and
there is a  Brown-Comenetz dualizing module
$\IIRG=\Cellk(R)$ provided by the Gorenstein condition. However, as in
\ref{InduceItUp} there is also \acoinduced/ Brown-Comenetz dualizing module,
given by $\IIRO=\Cell_k \Hom_{\Zp}(R,\Zpinfty)$.
For \adiscrete/ finitely-generated $p$-primary torsion $R$-module $M$,
the two associated notions of duality are given by
\[
     \begin{aligned}
     \Dual{\IIRG}(M)&\weq \Sigma^{-1} \Ext^1_R(M,
     R)\\
     \Dual\IIRO(M)&\weq\hphantom{\Sigma^{-1}}
     \Ext^0_{\Zp}(M,\Zpinfty)\,,
     \end{aligned}
\]
where as before the lower $\Ext$-group is the Pontriagin dual of $M$.
Perhaps surprisingly, the two $\Ext$-functors on the right are
naturally isomorphic on the category of \finitelength/ skew $R$-modules.  This can be proved by showing
that there are equivalences
\[
\begin{aligned}
  \IIRG&\weq \Sigma^{-1}\Zpinfty\Tensord_{\Zp}R\\
  \IIRO&\weq \hphantom{\Sigma^{-1}}\Ext^0_{\Zp}(R,\Z/p^\infty)
\end{aligned}
\]
and observing that there is 
a canonical isomorphism
\[
       R \to \Ext^0_{\Zp}(R,\Zp)
\]
given by the map which sends $r\in R$ to the trace over $\Zp$ of the
map $x\mapsto rx$.  These considerations produce an
$R[\Aut(R)]$-equivalence $\Sigma\IIRG\weq\IIRO$.  Hence in this case,
also, Gorenstein duality agrees up to suspension with Pontriagin
duality as strongly as we might hope.

\end{NumberedSubSection}

\begin{NumberedSubSection}{Power series over a field}
  Suppose $k$~is a finite field (see below), that $R$ is the power series ring $k\lbbracket
  x_1,\ldots,x_{n-1}\rbbracket$, and that $R\to k$ is the natural map sending
  $x_i$ to zero. The ring $R$ is regular, $R\to k$ is Gorenstein, 
  and the Gorenstein condition provides a Brown-Comenetz
  dualizing module
  $\IIRG=\Cell_k(R)$.
  As in \ref{InduceItUp}, there is \acoinduced/ Brown-Comenetz dualizing
  module $\IIRO=\Cell_k\Ext^0_k(R, k)$. 
  Let $\mfm$ denote the kernel of $R\to k$. For \afinitelength/ (\ref{FiniteLength}) $R$-module $M$, the two
  associated notions of duality are given by
  \begin{equation}\label{PSFieldDualities}
      \begin{aligned}
      \Dual\IIRG(M)&\weq \Sigma^{-(n-1)}\Ext^{n-1}_R(M,R)\\
      \Dual\IIRO(M)&\weq\hphantom{\Sigma^{-(n-1)}}\Ext^0_k(M,k)
      \end{aligned}
  \end{equation}

  Let $\RR'$ denote $R\Tensord_kR\iso
  k\lbbracket x_1\,\ldots,x_{n-1}\rbbracket\Tensord_kk\lbbracket y_1,\ldots,y_{n-1}\rbbracket$,
  let $\RR=k\lbbracket
  x_1,\ldots,x_{n-1},y_1,\ldots,y_{n-1}\rbbracket$ be the evident
  completion of $\RR'$, and let \Line/ be given by the formula
  \[\Line=\pi_{n-1}(R\Tensor_{\RR}R)\iso\Tor_{n-1}^{\RR}(R,R)\,.\] (Here $R$ is
  treated as an $\RR$-module by the completed multiplication map
  $\RR\to R$ which has $x_i\mapsto x_i$ and $y_i\mapsto x_i$.)  Note that $\Aut(R)$ acts
  naturally on \Line/ in a diagonal~way; we are using here the fact
  that because $k$~is finite, any automorphism of $R$ carries
  $k\subset R$ to itself. The following proposition compares the two
  dualities of \ref{PSFieldDualities}.

  \begin{prop}\label{TwistOverk}
    The object \Line/ is a free (\discrete) $R$-module on one
    generator. For any \finitelength/ $R$-module $M$, there is an
  isomorphism
  \[
     \Line\Tensord_R\Ext^{n-1}_R(M,R)\iso \Ext^0_k(M,k)\,.
  \]
   which is natural with respect to skew homomorphisms
   $M\to M'$.
  \end{prop}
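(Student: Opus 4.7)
The plan is to prove (a) by a direct Koszul calculation and then to construct the isomorphism in (b) in two stages: first on ordinary $R$-modules, and then verifying the additional naturality with respect to skew homomorphisms.

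For (a), I would recognize that the kernel of the completed multiplication $\RR \to R$ is generated by the regular sequence $(x_1 - y_1, \ldots, x_{n-1} - y_{n-1})$ in $\RR$, and take the Koszul resolution on this sequence as a free resolution of $R$ over $\RR$. Tensoring with $R$ over $\RR$ annihilates each $x_i - y_i$ (since $x_i$ and $y_i$ both act on $R$ as $x_i$), so all differentials in the resulting complex vanish and one obtains
\[
   \Tor^{\RR}_j(R,R) \iso \Lambda^j_R R^{n-1}\,.
\]
In top degree $j = n-1$ this is a free $R$-module of rank one generated by $e_1 \wedge \cdots \wedge e_{n-1}$, where $e_i$ is the class of $x_i - y_i$; this establishes the first assertion.

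For (b), I would first construct the isomorphism on finite length $R$-modules without the skew structure. Both $\Line \Tensord_R \Ext^{n-1}_R(-,R)$ and $\Ext^0_k(-,k)$ are contravariant exact functors on the category of finite length $R$-modules, and each sends the simple object $k$ to a single copy of $k$: by Gorenstein duality $\Ext^{n-1}_R(k,R) \iso k$, while $\Ext^0_k(k,k) = k$. Using the Koszul resolution $K(x_1,\ldots,x_{n-1};R)$ of $k$, whose top term computes $\Ext^{n-1}_R(k,R)$ via a residue class that lives intrinsically in $\Line\Tensord_R k$, one pins down a canonical isomorphism at $M = k$. Since every finite length $R$-module is built from $k$ by extensions and both functors are exact, this propagates to a natural isomorphism of functors on finite length $R$-modules.

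The main obstacle, and the real content of the proposition, is the extended naturality in the skew category; this is where $\Line$ appears rather than simply an abstract rank-one free $R$-module. The action of an automorphism $\sigma \in \Aut(R)$ on $\Line$ is to be computed by lifting $\sigma$ to $\sigma \otimes \sigma$ on $\RR$ and observing that it carries the generator $x_i - y_i$ to $\sigma(x_i) - \sigma(y_i)$, which modulo the square of the diagonal ideal equals $\sum_j (J\sigma)_{ij}(x_j - y_j)$ for the Jacobian $J\sigma$ evaluated at the origin. Hence $\sigma$ acts on the top exterior power $\Line$ by $\det(J\sigma|_0)$. The hard part is to show that this determinantal twist is precisely the factor reconciling the $\sigma$-equivariance of $\Ext^{n-1}_R(-,R)$ (where $\sigma$ twists the target $R$) with that of $\Ext^0_k(-,k)$ (where only the induced action on $k$ enters); this amounts to tracing the action of $\sigma$ through the residue identification $\Ext^{n-1}_R(k,R) \iso k$ in the Koszul model. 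This is the bookkeeping treated from a complementary viewpoint in \cite{rHuang}.
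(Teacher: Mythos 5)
Your part (a) follows the same route as the paper (a Koszul computation over $S$), and your identification of the $\Aut(R)$-action on $\Line$ via the Jacobian determinant is a useful explicit description consistent with what the paper leaves implicit. The problem is with part (b). The step ``both functors are exact and agree at $k$, so the isomorphism propagates by extensions to a natural isomorphism on all finite length modules'' is not valid as stated. Agreement of two exact functors at the single simple object $k$ does not by itself produce a natural transformation between them: to propagate by extensions one already needs a natural transformation defined on all objects, compatible with the connecting maps in $\Ext$, and that is precisely what is missing. (As a sanity check, the identity functor and the twist by an automorphism $\sigma\in\Aut(R)$ acting trivially on $k$ are both exact and agree at $k$, yet are not naturally isomorphic on finite length $R$-modules.) The paper avoids this by explicitly constructing a natural map for all $M$ at once, namely the chain of equivalences
\[
\Sigma^{1-n}\Lined\Tensor_R\Hom_k(M,k)\weq
\Hom_S(R,S)\Tensor_S\Hom_k(M,R)\weq\Hom_S(R,\Hom_k(M,R))\weq\Hom_R(M,R)\,,
\]
using \ref{TwoTensors}, \ref{GetTheSameAnswer}, and \ref{TensorsAndHoms}, and only then taking $\pi_{1-n}$. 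Because this chain is built from canonical adjunctions and the pairing $\Hom_S(R,S)\Tensor_S X\to\Hom_S(R,X)$, naturality --- including skew naturality, since every step is functorial in $S$-module maps and $\Aut(R)$ acts on $S$ through $\sigma\otimes\sigma$ --- comes along automatically, which is exactly the part you flag as ``the hard bookkeeping'' but do not actually carry out.

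A secondary remark: your sketch also never introduces the dual object $\Lined=\Ext^{n-1}_S(R,S)$ and the pairing $\Line\Tensord_R\Lined\to R$; this is what lets the paper replace the asserted isomorphism by the equivalent one $\Lined\Tensord_R\Ext^0_k(M,k)\cong\Ext^{n-1}_R(M,R)$, which is the form that drops out of the formula $\Hom_S(R,X)\weq\Sigma^{1-n}\Lined\Tensor_S X$ for $S$-modules $X$. You can certainly set things up working directly with $\Line$, but you would then need to supply the analogous explicit natural map rather than appeal to exactness.
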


  \begin{rem}
    Underlying \ref{TwistOverk} is an $R[\Aut(R)]$-equivalence
    \[\Sigma^{n-1}\Line\Tensor_R\IIRG\weq\IIRO\,.\]
    or an equivalence $\Sigma^{n-1}\Line\weq\Hom_R(\IIRG,\IIRO)$. On the indicated category of skew $R$-modules, Gorenstein duality
    agrees naturally (up to suspension) with Kronecker duality over
    $k$ only after twisting by $\Line$.
  \end{rem}

  Let $\Rt$ denote $R$ considered as \adiscrete/ $\RR$-module via the
  map $\RR\to R$ with $x_i\mapsto 0$ and $y_i\mapsto x_i$. 

  \begin{lem}\label{TwoTensors}
    The natural map $R\Tensor_{\RR'}\Rt\to R\Tensor_{\RR}\Rt$ is an
    equivalence. 
  \end{lem}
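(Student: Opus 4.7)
The plan is to realize both sides of the lemma via a common Koszul resolution. Observe first that the kernel of the map $\RR\to\Rt$ sending $x_i\mapsto 0$, $y_i\mapsto x_i$ is the ideal generated by $x_1,\ldots,x_{n-1}$ (treating these as the first family of variables in $\RR$, distinct from the $y_i$), so $\Rt\iso\RR/(x_1,\ldots,x_{n-1})$; the same relations present $\Rt$ as $\RR'/(x_1,\ldots,x_{n-1})$. So the question is to resolve $\Rt$ over $\RR$ and $\RR'$ in a coherent way and to compare the resulting tensor products with $R$.

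The next step is to check that $(x_1,\ldots,x_{n-1})$ is a regular sequence in both $\RR$ and $\RR'$. For $\RR=k\lbbracket x_i,y_i\rbbracket$ this is standard commutative algebra. For $\RR'=k\lbbracket x_i\rbbracket\Tensord_k k\lbbracket y_i\rbbracket$ (non-noetherian in general), the key observation is that $k\lbbracket y_i\rbbracket$ is flat over $k$, whence $\RR'$ is flat over the subring $k\lbbracket x_i\rbbracket$; since $(x_i)$ is regular in $k\lbbracket x_i\rbbracket$, it remains regular in the flat extension $\RR'$. Granted this, the Koszul complex $L'_\bullet = \Lambda_{\RR'}(e_1,\ldots,e_{n-1})$ with $d(e_i)=x_i$ is a free $\RR'$-resolution of \Rt, and $L_\bullet := L'_\bullet\otimes_{\RR'}\RR$ is correspondingly a free $\RR$-resolution of \Rt.

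Using these resolutions to compute the derived tensor products, one has
\[
R\Tensor_{\RR'}\Rt\,\weq\, R\otimes_{\RR'}L'_\bullet,\qquad R\Tensor_{\RR}\Rt\,\weq\, R\otimes_{\RR}L_\bullet,
\]
where the right-hand sides are ordinary tensor products with termwise-free complexes. The comparison map of the lemma, induced by base change along $\RR'\to\RR$, becomes the standard isomorphism $R\otimes_{\RR'}L'_\bullet\iso R\otimes_{\RR}(\RR\otimes_{\RR'}L'_\bullet) = R\otimes_{\RR}L_\bullet$, valid because $R$ is already an $\RR$-module through the multiplication map. I expect the main obstacle to be establishing the regular-sequence property in the non-noetherian ring $\RR'$; the flatness observation above is the cleanest route I see, and it is what makes the rest of the argument go through with no further hypotheses on $k$ or $n$.
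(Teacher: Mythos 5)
Your proof is correct and takes essentially the same route as the paper, which likewise reduces the lemma to the observation that $(x_1,\ldots,x_{n-1})$ is a regular sequence presenting $\Rt$ over both $\RR$ and $\RR'$, then compares Koszul resolutions. Your explicit justification of regularity in the non-noetherian ring $\RR'$ via flatness over $k\lbbracket x_1,\ldots,x_{n-1}\rbbracket$ is a sound way to fill in the detail the paper leaves implicit.
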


  \begin{proof}
    This follows from an explicit calculation depending on the fact
    that for both $\RR$ and $\RR'$, the module $\Rt$ is the quotient
    of the ring by the ideal generated by the regular sequence
    $(x_1,\ldots,x_{n-1})$. 
  \end{proof}

  In the following lemma, $\RR$ acts on
  $\Hom_k(M,R)\weq\Ext^0_k(M,R)$ in a completed bimodule fashion,
  e.g., $(x_i\cdot f)(m)=f(x_im)$ and $(y_i\cdot f)(m)=y_if(m)$.

  \begin{lem}\label{GetTheSameAnswer}
    If $M$ is \afinitelength/
    $R$-module, then the natural maps
    \[
    \begin{aligned}
      \Hom_{\RR}(R,\Hom_k(M,R))&\to\Hom_{\RR'}(R,\Hom_k(M,R))\\
       R\Tensor_{\RR'}\Hom_k(M,R)&\to R\Tensor_{\RR}\Hom_k(M,R)
    \end{aligned}
    \]
    are equivalences.
  \end{lem}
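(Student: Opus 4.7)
The plan is to resolve $R$ as a module over both $\RR$ and $\RR'$ using the same Koszul complex, paralleling the proof of \ref{TwoTensors}. Since the multiplication map $\RR\to R$ (extending $\RR'\to R$) sends $x_i\mapsto x_i$ and $y_i\mapsto x_i$, we have $R=\RR/(y_i-x_i)=\RR'/(y_i-x_i)$; the key technical claim is that $(y_1-x_1,\ldots,y_{n-1}-x_{n-1})$ is a regular sequence in both rings. The fact that $N:=\Hom_k(M,R)$ carries compatible $\RR'$- and $\RR$-module structures uses that $M$ is of finite length, hence the $x_i$ act nilpotently on $N$ via pre-composition, while the $y_i$-action on the target $R$ is already $\mfm$-adically continuous.

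For $\RR=k\lbbracket x,y\rbbracket$ the regularity of $(y_i-x_i)$ is automatic: $\RR$ is regular local of dimension $2(n-1)$ and $R=\RR/(y_i-x_i)$ is a regular local quotient of the expected codimension $n-1$. For $\RR'$ I would argue inductively using the injection $\RR'\hookrightarrow\RR$: at stage $j$, the partial quotient $\RR'/(y_1-x_1,\ldots,y_{j-1}-x_{j-1})$ embeds into $\RR/(y_1-x_1,\ldots,y_{j-1}-x_{j-1})\iso k\lbbracket x,y_j,\ldots,y_{n-1}\rbbracket$, which is a domain, so $y_j-x_j$ is a non-zerodivisor on the partial quotient over $\RR'$. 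Let $L^{\RR'}_\bullet$ denote the Koszul complex on $(y_i-x_i)$ over $\RR'$, a free $\RR'$-resolution of $R$; then $L^{\RR}_\bullet:=\RR\Tensor_{\RR'}L^{\RR'}_\bullet$ is the parallel Koszul free $\RR$-resolution of $R$.

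Both sides of each comparison can then be computed on essentially the same chain-level complex:
\[
R\Tensor_{\RR}N \weq L^{\RR}_\bullet\Tensor_{\RR}N \iso L^{\RR'}_\bullet\Tensor_{\RR'}N \weq R\Tensor_{\RR'}N,
\]
\[
\Hom_{\RR}(R,N)\weq\Hom_{\RR}(L^{\RR}_\bullet,N) \iso \Hom_{\RR'}(L^{\RR'}_\bullet,N)\weq\Hom_{\RR'}(R,N),
\]
where the middle identifications use the base-change formula $(\RR\Tensor_{\RR'}L^{\RR'})\Tensor_{\RR}N\iso L^{\RR'}\Tensor_{\RR'}N$ and the extension-of-scalars/restriction adjunction, respectively. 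Under these identifications the natural maps induced by $\RR'\hookrightarrow\RR$ reduce to identities, hence are equivalences. The main obstacle is the regularity verification in the incomplete ring $\RR'$; once that is secured, the rest is formal Koszul bookkeeping, and the specific structure of $\Hom_k(M,R)$ enters only to make sense of both module structures.
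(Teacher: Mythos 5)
Your approach is genuinely different from the paper's, but it rests on a false identification: $R \ne \RR'/(y_1-x_1,\ldots,y_{n-1}-x_{n-1})$. The kernel of the \emph{uncompleted} multiplication map $\RR'\to R$ is strictly larger than the ideal $(y_i-x_i)\RR'$. Take $n=2$ and $\operatorname{char}k = 0$ for concreteness: the element $\xi = 1\Tensord e^{y_1} - e^{x_1}\Tensord 1$ of $\RR'$ is killed by multiplication, yet $\xi\notin(y_1-x_1)\RR'$. For $\xi=(y_1-x_1)\eta$ forces $\eta=(e^{y_1}-e^{x_1})/(y_1-x_1)=\sum_{i,j\ge 0} x_1^iy_1^j/(i+j+1)!$ inside $\RR$, and this does not lie in the subring $\RR'$: elements of $\RR'$ are exactly the power series in $x_1,y_1$ whose coefficient matrix has finite rank, whereas the Hankel matrix $(1/(i+j+1)!)_{i,j\ge0}$ has infinite rank since $1/(n+1)!$ satisfies no linear recurrence. (Analogous examples exist in characteristic $p$, e.g.\ replacing $e^{y_1}$ by $\sum_j y_1^{p^j}$.) Thus $H_0(L^{\RR'}_\bullet)=\RR'/(y_i-x_i)\ne R$, your Koszul complex over $\RR'$ is not a resolution of $R$, and the chain-level identification $R\Tensor_{\RR'}N\weq L^{\RR'}_\bullet\Tensor_{\RR'}N$ on which the proposal hinges fails. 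Your regularity argument also has a secondary defect: $\RR'$ is not local (for instance $1-x_1\Tensord y_1$ is a non-unit lying outside $(x,y)$), so $\RR'\to\RR$ is not faithfully flat and the embedding of partial quotients at stage $j>1$ is not automatic; but the failure of $R=\RR'/(y_i-x_i)$ is decisive regardless.

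The paper's proof is organized precisely to sidestep this. Using that $\Hom_k(\whatever,R)$ is exact, and that derived $\Hom$ and $\Tensor$ carry short exact sequences to cofibre sequences, one reduces by a composition series to $M=k$, for which $\Hom_k(k,R)$ is the $\RR$-module $\Rt$, not $R$. The $\Hom$-statement is then reduced, via the adjunction $\Hom_{\RR}(R,\Rt)\weq\Hom_R(\Rt\Tensor_{\RR}R,\Rt)$ and its $\RR'$-analogue, to the $\Tensor$-statement, which is exactly \ref{TwoTensors}. The proof of \ref{TwoTensors} in turn resolves $\Rt$ over both $\RR$ and $\RR'$ by the Koszul complex on $(x_1,\ldots,x_{n-1})$: this sequence genuinely generates $\ker(\RR'\to\Rt)$ and genuinely is regular in $\RR'$, because the $x_i$ live in one tensor factor and $k\lbbracket y\rbbracket$ is free over $k$. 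Replacing $R$ by $\Rt$ and $(y_i-x_i)$ by $(x_i)$ is precisely the move that renders the difference between the complete ring $\RR$ and the incomplete ring $\RR'$ harmless, and it is exactly what your choice of regular sequence loses.
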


  \begin{proof}
    The module $M$ has a composition series in which the successive
    quotients are isomorphic to $k$; by an inductive argument, it
    suffices to treat the case $M=k$. In this case the second
    statement is \ref{TwoTensors}, while the first follows from
    \ref{TwoTensors} and the equivalences
   \[
     \begin{aligned}
     \Hom_{\RR}(R,\Rt) &\weq \Hom_R(\Rt\Tensor_{\RR}R, \Rt)\\
     \Hom_{\RR'}(R,\Rt) &\weq\Hom_R(\Rt\Tensor_{\RR'}R,\Rt)\,.
     \end{aligned}
   \] 
  \end{proof}

     We will use the fact that for any $R$-modules
    $A$ and $B$, there are natural weak equivalences
    \begin{equation}\label{TensorsAndHoms}
       \begin{aligned}
          \Hom_R(A,B)&\weq\Hom_{\RR'}(R, \Hom_k(A,B))\\
          A\Tensor_RB&\weq R\Tensor_{\RR'}(A\Tensor_kB)
       \end{aligned}
    \end{equation}

  \begin{titled}{Proof of \ref{TwistOverk} (sketch)} The fact that
    \Line/ is a free module of rank~1 over $R$ follows from 
    calculation with the usual Koszul resolution of $R$ over
    $\RR$. Let $\Lined$ denote $\Ext^{n-1}_S(R,S)$. Another
    calculation with the Koszul resolution shows that $\Lined$ is also
    a free module of rank~1 over~$R$, and that
     the composition pairing
    \[\Line\Tensord_R\Lined=
    \Tor_{n-1}^S(R,R)\Tensord_R\Ext^{n-1}_S(R,S)\to \Tor_0^S(R,S)\cong R\]
    is an isomorphism (this is also implicit in \cite[Lemma~1.5]{AILN}). To finish the proof, it is enough to show that
    for any~$M$ as described there is a natural isomorphism
    \[     \Lined\Tensord_R\Ext^0_k(M,k)\to \Ext^{n-1}_R(M,R)\,.\]
    Again, consideration of the Koszul resolution shows that $R$ is
    finitely built from~$S$ as an $S$-module, and that 
   $\Ext^i_S(R,S)$ vanishes if $i\ne n-1$. It follows that 
    $\Hom_S(R,S)$ is equivalent to~$\Sigma^{1-n}\Lined$, and that for
    any $S$-module $X$ there is a natural isomorphism
    \begin{equation}\label{FundamentalPairing}
      \Sigma^{1-n}\Lined\Tensor_SX\sim
      \Hom_S(R,S)\Tensor_SX\sim\Hom_S(R,X)\,.
    \end{equation}
    Now let $M$ be \afinitelength/ $R$-module, and let $X$ be the \discrete/
    $S$-module $\Hom_k(M,R)$. The module  $M$ is finite-dimensional over $k$,
    and so $X$ is equivalent to $\Hom_k(M,k)\Tensor_kR$, and (cf.
    \ref{GetTheSameAnswer}, \ref{TensorsAndHoms})
    \ref{FundamentalPairing} gives an equivalence
    \[\begin{aligned}
    \Sigma^{1-n}\Lined\Tensor_R\Hom_k(M,k)&\weq\Hom_S(R,S)\Tensor_R(R\Tensor_S(\Hom_k(M,k)\Tensor_kR))\\
             &\weq\Hom_S(R,S)\Tensor_S\Hom_k(M,R)\\
             &\weq\Hom_S(R,\Hom_k(M,R))\\
             &\weq\Hom_R(M,R)\,.
    \end{aligned}\]
    Applying $\pi_{1-n}$ gives the desired isomorphism. The
    construction of the isomorphism is natural enough to respect skew
    homormorphisms $M\to M'$. \qed
   \end{titled}
\end{NumberedSubSection}

\begin{NumberedSubSection}{Power series over a $p$-adic ring}\label{SFullDuality}
  Let $W$ be the ring of integers in a finite unramified extension
  field of $\Qp$, $k$ the residue field of $W$, $R$ the formal
  power series ring $W\lbbracket x_1,\ldots,x_{n-1}\rbbracket$, and
  $R\to k$ the quotient map sending each $x_i$ to zero. As before, $R\to k$
  is Gorenstein  and the Gorenstein
  condition provides a Brown-Comenetz dualizing module
  $\IIRG=\Cellk(R)$. As in \ref{InduceItUp}, there is \acoinduced/
  Brown-Comenetz dualizing module 
  $\IIRO=\Cellk\Hom_{\Zp}(R,\Zpinfty)$. Let \mfm/ denote the kernel of
  $R\to k$. For \afinitelength/ (\ref{FiniteLength})
  $R$-module $M$, the two associated notions of duality are given by
  \[
      \begin{aligned}
      \Dual\IIRG(M)&\weq \Sigma^{-n}\Ext^{n}_R(M,R)\\
      \Dual\IIRO(M)&\weq\hphantom{\Sigma^{-0}}\Ext^0_{\Zp}(M,\Zpinfty)
      \end{aligned}
  \]
  Let $S=W\lbbracket x_1,\ldots,x_{n-1},y_1,\ldots,y_{n-1}\rbbracket$
  be the evident completion of \hbox{$R\Tensord_WR$}, and let
  $\Line=\pi_{n-1}(R\Tensor_SR)$.

  \begin{prop}\label{TwistOverO}
    The object \Line/ is a free \discrete/ $R$-module on one
    generator. For any \finitelength/ $R$-module $M$, there is an
  isomorphism
  \[
     \Line\Tensord_R\Ext^{n}_R(M,R)\iso \Ext^0_{\Zp}(M,\Zpinfty)\,.
  \]
   which is natural with respect to skew homomorphisms
   $M\to M'$.
  \end{prop}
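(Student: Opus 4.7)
The plan is to follow the proof of \ref{TwistOverk} almost line-by-line, with $W$ playing the role of $k$, and then to convert $W$-duality into $\Zp$-duality using the trace isomorphism for the unramified extension $\Zp\to W$ that is the key ingredient in \ref{padicGor}.

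First I would observe that the multiplication map $S\to R$ (sending $y_i$ to $x_i$) realizes $R$ as the quotient of $S$ by the regular sequence $(x_i-y_i)_{i=1}^{n-1}$. The associated Koszul resolution has length $n-1$, so $R$ is finitely built from $S$, $\Tor^S_i(R,R)=0=\Ext_S^i(R,S)$ for $i\ne n-1$, and both $\Line=\Tor_{n-1}^S(R,R)$ and $\Lined:=\Ext^{n-1}_S(R,S)$ are free of rank~$1$ over~$R$. As in \cite[Lemma~1.5]{AILN}, the composition pairing $\Line\Tensord_R\Lined\to R$ is an isomorphism.

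Next I would transfer the mechanisms of \ref{TensorsAndHoms}, \ref{TwoTensors}, and \ref{GetTheSameAnswer} to the present setting, using flatness of $R$ over $W$ in place of flatness over $k$. This yields
\[
\Hom_R(A,B)\weq\Hom_S(R,\Hom_W(A,B)),\qquad A\Tensor_RB\weq R\Tensor_S(A\Tensor_WB)
\]
for any $R$-modules $A,B$. Running the same chain of equivalences as in the proof of \ref{TwistOverk} and combining with $\Hom_S(R,S)\weq\Sigma^{1-n}\Lined$ then gives, for a \finitelength/ $R$-module $M$,
\[
\Sigma^{1-n}\Lined\Tensor_R\Hom_W(M,W)\weq\Hom_R(M,R).
\]
Since $M$ is finitely generated and torsion over the DVR $W$, there is a natural equivalence $\Hom_W(M,W)\weq\Sigma^{-1}\Ext^1_W(M,W)$, and the trace isomorphism $W\iso\Ext^0_{\Zp}(W,\Zp)$ for the unramified extension $\Zp\to W$ induces a natural isomorphism $\Ext^1_W(M,W)\iso\Ext^0_{\Zp}(M,\Zpinfty)$, exactly as in \ref{padicGor}. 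Substituting and applying $\pi_0$ yields $\Lined\Tensord_R\Ext^0_{\Zp}(M,\Zpinfty)\iso\Ext^n_R(M,R)$, and tensoring with $\Line$ over $R$ while invoking $\Line\Tensord_R\Lined\iso R$ produces the stated isomorphism.

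The argument is a recombination of pieces from \ref{TwistOverk} and \ref{padicGor}, so no step will be individually difficult. The most delicate point will be the bookkeeping of suspension shifts: the Koszul complex over $S$ contributes $1-n$ and the $W$-torsion of $M$ contributes a further $-1$, giving a total of $-n$, in agreement with the fact that $k$ has projective dimension $n$ over $R$. Naturality with respect to skew homomorphisms will be automatic, since the Koszul resolution, the $S$-module identifications, and the trace form are all canonical under ring automorphisms of $R$ that restrict to automorphisms of $W$.
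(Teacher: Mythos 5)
Your proposal follows the paper's proof essentially line by line: reduce to showing $\Lined\Tensord_R\Ext^0_{\Zp}(M,\Zpinfty)\iso\Ext^{n}_R(M,R)$, get $\Lined\Tensor_R\Hom_W(M,W)\weq\Sigma^{n-1}\HomR(M,R)$ by transplanting the mechanism from \ref{TwistOverk} with $W$ in place of $k$, and then convert $\Hom_W(M,W)\weq\Sigma^{-1}\Hom_{\Zp}(M,\Zpinfty)$ via the trace isomorphism from \ref{padicGor}. The only detail you pass over silently — and which the paper flags explicitly — is that the inductive filtration in the analogue of \ref{GetTheSameAnswer} must now use composition series whose successive quotients are cyclic $W$-modules rather than copies of $k$, but this is a technical adjustment and does not change the strategy.
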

  
  \begin{rem}
    Behind this proposition is an equivalence
    $\Sigma^n\Line\Tensor_R\IIRG\weq\IIRO$. On the indicated category
    of skew $R$-modules, Gorenstein duality agrees naturally (up to
    suspension) with Pontriagin duality only after twisting by
    $\Line$. 
  \end{rem}

  \begin{titled}{Proof of \ref{TwistOverO} (sketch)}
    Let $\Lined$ denote $\Ext^{n-1}_S(R,S)$. As in the proof
    of \ref{TwistOverk}, it is enough to show that for all $M$ of the
    indicated type 
    there is a natural isomorphism
    \[   \Lined\Tensord_R\Ext^0_{\Zp}(M,\Zpinfty)\cong
    \Ext^{n}_R(M,R)\,.\]
     Let $\mfn\subset\mfm$ denote the kernel of the map $R\to W$ sending each $x_i$
    to zero. The arguments in the proof of \ref{TwistOverk} give an
    equivalence 
    \[
       \Lined\Tensor_R\Hom_{W}(M,W)\weq\Sigma^{n-1}\HomR(M,R)
    \]
    for any \discrete/ finitely-generated $R$-module $M$ which is
    annihilated by a power of~\mfn. (The observation
    in the proof of \ref{GetTheSameAnswer} that $M$ has a composition
    series in which the successive quotients are isomorphic to~$k$ has
    to be replaced by the observation that $M$ has a composition
    series in which the successive quotients are isomorphic as
    $R$-modules to cyclic modules over the PID~$W$.)
     If  in addition $M$ is
    \mfm-primary, i.e., if $M$ is a $p$-primary torsion abelian group,
    then  the considerations of  \ref{padicGor} give an equivalence
    \[
        \Sigma\Hom_W(M,W)\weq\Hom_{\Zp}(M,\Zpinfty)\,.
    \]
    Combining the equivalences, applying $\pi_*$, and verifying
    naturality gives the result. \qed 
  \end{titled}
  
\end{NumberedSubSection}

\section{Chromatic ingredients}\label{CPreliminaries}

The purpose of this section is to recall some material from \cite{rHS}
and \cite{rS}.  As in \ref{GHIntro}, let \IIR/ denote
$\Cell^{\Sn}_{\kn}\Hom(\Sn,\II)$, where \II/ is the ordinary
Brown-Comenetz dual of the sphere.

\begin{rem}\label{IIRGivesPontriagin}
  Note that if $X$ is an \Sn-module which is built from \kn, then
  $\Hom(X,\IIR)\weq\Hom(X,\Hom(\Sn,\II))$ is equivalent to
  $\Hom(\Sn\Tensor X,\II)\weq\Hom(X,\II)$. In particular, for
  such an $X$ the homotopy groups of $\Dual\IIR X$ are the Pontriagin
  duals of the homotopy groups of $X$.
\end{rem}

\begin{prop}\label{KnIsBC}
  The \Sn-module \IIR/ is a Brown-Comenetz dualizing module for
  $\Sn\to\kn$. 
\end{prop}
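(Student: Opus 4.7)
The plan is to verify the two conditions in Definition \ref{DBrownComenetz} for $\IIR$ regarded as an $\Sn$-module with $k=\kn$: namely, that $\IIR$ is effectively constructible from $\kn$, and that $\Hom_{\Sn}(\kn,\IIR)$ is equivalent as a left $\kn$-module to $\Sigma^d\kn$ for some $d\ge 0$.

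The first condition will be essentially formal. By construction, $\IIR=\Cell^{\Sn}_{\kn}\Hom(\Sn,\II)$ is $\kn$-cellular. Proposition \ref{knIsKoszul} supplies a Koszul complex $\Typen$ for $\Sn\to\kn$, so by Proposition \ref{Proxy} (and the equivalent conditions of \ref{ConstructingCellk}), every $\kn$-cellular $\Sn$-module, and in particular $\IIR$, is effectively constructible from $\kn$.

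For the dualizing property, I would first apply Remark \ref{IIRGivesPontriagin} with $X=\kn$ (which trivially builds itself) to get a natural equivalence
\[
\Hom_{\Sn}(\kn,\IIR)\weq\Hom(\kn,\II),
\]
reducing the problem to identifying $\Hom(\kn,\II)$ as a $\kn$-module. By Brown-Comenetz duality, $\pi_i\Hom(\kn,\II)\iso\Hom_{\Z}(\pi_{-i}\kn,\Q/\Z)$. Since $\pi_*\kn\iso\F_p[v_n^{\pm 1}]$ is one-dimensional over $\F_p$ in each degree that is a multiple of $2(p^n-1)$ and vanishes otherwise, the Pontriagin dual has the same description; in particular $\pi_0\Hom(\kn,\II)\iso\F_p$ is nonzero. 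Picking a generator yields a left $\kn$-module map $\kn\to\Hom(\kn,\II)$. I would then argue this is an equivalence: since $\pi_*\kn$ is a graded field, both sides are free of rank one as $\pi_*\kn$-modules, and the chosen generator of the source maps to a generator of the target, so the induced map on $\pi_*$ is an isomorphism. This establishes condition (ii) with $d=0$.

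The main obstacle is the last step: upgrading the degree-by-degree homotopy identification to an honest $\kn$-module equivalence. The graded-field nature of $\pi_*\kn$ is what makes this work, since over such a ring any module is determined up to equivalence by its homotopy, and a map of rank-one free modules sending generator to generator is automatically an isomorphism. Without this algebraic input, the counting argument would only give an abstract isomorphism of graded groups rather than the required equivalence of $\kn$-modules.
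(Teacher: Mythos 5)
Your proof is correct and follows the same outline as the paper's: use the Koszul complex from \ref{knIsKoszul} together with \ref{Proxy} to get effective constructibility, then use \ref{IIRGivesPontriagin} plus Pontriagin duality and the graded-field structure of $\kn_*$ to identify $\Hom(\kn,\IIR)$ with $\kn$. The paper compresses this into two sentences (citing \ref{RecognizeGorenstein} for the Koszul-complex reduction), but the substance is identical; your more explicit treatment of the rank-one $\kn$-module comparison is exactly the "homotopy group calculation" the paper leaves to the reader.
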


\begin{proof}
  Following \ref{IIRGivesPontriagin}, a homotopy group calculation
  shows that $\Hom(\kn,\II)$ is equivalent to $\kn$ as a left
  $\kn$-module.  Since $\Sn\to \kn$ has a Koszul complex
  (\ref{knIsKoszul}), the result follows from
  \ref{RecognizeGorenstein}.
\end{proof}

Recall that a
\kn-local spectrum $X$ is said to be \emph{invertible} if there exists
a \kn-local spectrum $Y$ such that $X\KSmash Y\weq\SKn$. In the
following statement ``shifted isomorphic'' means ``isomorphic up to
suspension''. 

\begin{prop} \label{InvertibleMeansNiceK}
  Suppose that $X$ is a \kn-local spectrum. Then the following 
  conditions are equivalent:
  \begin{enumerate}
  \item $X$ is invertible.
  \item $\kn_* X$ is shifted isomorphic to $\kn_*$ as a $\kn_*$-module.
  \item $\kn^*X$ is shifted isomorphic to $\kn^*$ as a $\kn^*$-module.
  \item $\Eph_* X$ is shifted isomorphic to $\Ep_*$ as an $\Ep_*$-module.
    \label{Eversion}
  \item $\Ep^*X$ is shifted isomorphic to $\Ep^*$ as an
    $\Ep^*$-module. \label{Ecversion}
  \end{enumerate}
\end{prop}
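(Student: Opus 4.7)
\begin{titled}{Proof sketch of \ref{InvertibleMeansNiceK}}
The plan is to organize the five conditions around condition~(2): first establish (1) $\Leftrightarrow$ (2), then deduce (3) from (2) by a universal coefficient collapse, and finally bootstrap to $\Ep$-theory to obtain (4) and (5).

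For (1) $\Rightarrow$ (2), given an equivalence $X\hatSmash Y\weq\SKn$, I would smash with $\kn$; since the localization map $X\Tensor Y\to X\hatSmash Y$ is a $\kn_*$-equivalence and $\kn_*(\SKn)=\kn_*$, this yields $\kn_*X\Tensor_{\kn_*}\kn_*Y\cong\kn_*$. Because $\kn_*$ is a graded field, each tensor factor is automatically free, and the rank-one conclusion forces each to be shifted rank-one free. For (2) $\Rightarrow$ (1), I would take as candidate inverse $Y=\DK X=\Hom(X,\SK)$ and check that the evaluation map $X\hatSmash Y\to\SK$ is a $\kn$-local equivalence; by \ref{TwoKEquivalences}, it is enough to verify that it is a $\kn_*$-equivalence. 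The hypothesis that $\kn_*X$ is shifted rank one makes $X$ dualizable in the $\kn$-local category (a standard consequence of \cite[\S8]{rHS}), and the evaluation then becomes a $\kn_*$-equivalence by direct computation once $\kn\Tensor\DK X$ is identified with the $\kn_*$-linear dual of $\kn_*X$.

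The equivalence (2) $\Leftrightarrow$ (3) is the universal coefficient spectral sequence for $\kn$, which degenerates because $\kn_*$ is a graded field; this identifies $\kn^*X$ with the $\kn_*$-linear dual of $\kn_*X$, so shifted rank one on one side is shifted rank one on the other. For (2) $\Leftrightarrow$ (4), the key input is the comparison of \cite[\S8]{rHS}: for a $\kn$-local spectrum $X$, the module $\Eph_*X$ is pro-free over $\Ep_*$ of a given rank exactly when $\kn_*X$ is free over $\kn_*$ of the same rank, with degrees of generators matching modulo the grading of the coefficient ring. Shifted rank-one-freeness therefore transports directly between $\kn_*X$ and $\Eph_*X$. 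Finally (4) $\Leftrightarrow$ (5) is a parallel universal coefficient argument, now using the $\kn$-local pro-duality theory for $\Ep$ developed in \cite{rS} to identify $\Ep^*X$ with an appropriate completed $\Ep_*$-linear dual of $\Eph_*X$.

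The main obstacle will be the transition from $\kn$ to $\Ep$: one has to track the degree shift carefully through the pro-system defining $\Eph_*X$ and to ensure that $\kn$-local dualizability of $X$ is enough to make the universal coefficient arguments for $\Ep$ collapse in the expected way. Rather than reprove these collapse and pro-freeness facts, I would take them as given from \cite[\S8]{rHS} and \cite{rS}.
\end{titled}
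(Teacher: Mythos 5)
Your route is genuinely different from the paper's. The paper does essentially no work of its own: it cites \cite[14.2]{rHS}, which already contains the equivalences (1)$\Leftrightarrow$(2)$\Leftrightarrow$(3) together with an $E$-homology version, and then spends the entire proof on a single bookkeeping point, namely that Hovey--Strickland's spectrum $\EHS$ (with $\EHS_*=\Zp\lbbracket v_1,\ldots,v_{n-1}\rbbracket[v_n^{\pm1}]$) is not the same as the paper's Lubin--Tate spectrum $\Ep$ (with $W(\Fpn)$-coefficients), and that one translates between $\EHS_*^\vee(\whatever)$ and $\Eph_*(\whatever)$ by base change along the finite free extension $\EHS_*\to\Ep_*$, courtesy of Landweber exactness. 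By contrast, you reconstruct the content of \cite[14.2]{rHS} from more primitive ingredients: K\"unneth over the graded field $\kn_*$ for (1)$\Rightarrow$(2), dualizability of $X$ and nondegeneracy of the evaluation pairing for (2)$\Rightarrow$(1), universal-coefficient collapse for (2)$\Leftrightarrow$(3), and pro-freeness of $\Eph_*X$ from \cite[\S8]{rHS} for (2)$\Leftrightarrow$(4)$\Leftrightarrow$(5). Your argument is more self-contained and more illuminating; the paper's is much shorter.

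One point worth making explicit, though: the pro-freeness results you invoke from \cite[\S8]{rHS} are stated for $\EHS$, not for $\Ep$, so you have not actually escaped the notational wrinkle the paper devotes its proof to. When you assert that ``$\Eph_*X$ is pro-free over $\Ep_*$ of rank $d$ exactly when $\kn_*X$ is free over $\kn_*$ of rank $d$,'' you are implicitly applying the Hovey--Strickland results to a spectrum they did not directly treat, and the justification is exactly the paper's observation \eqref{BigVsSmall}: $\Eph_*(X)\iso\Ep_*\Tensord_{\EHS_*}\EHS_*^\vee(X)$, with $\Ep_*$ finitely generated free over $\EHS_*$, so pro-freeness of rank $d$ transports across the base change. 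Your sketch should record this, since otherwise the citation does not quite carry the weight you put on it. With that addition, your argument is correct and gives a cleaner conceptual picture than the paper's terse citation.
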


\begin{proof}
  This is essentially \cite[14.2]{rHS}. There is a technical point
  to take into account. Hovey and Strickland use the letter ``$E$'' to
  denote a spectrum which we will call $\EHS$; its homotopy groups are
  given by
  \[
      \EHS_* = \Zp\lbbracket
    v_1,v_2,\ldots,v_{n-1}\rbbracket[v_n,v_n^{-1}]\,.
  \] 
  where $|v_k|=2(p^k-1)$.
  Our ring $\Ep_*$ is a finitely generated free module over $\EHS_*$
  under the map sending $v_k$ to $u^{p^{k-1}}u_k$, where $u_0=p$,
  $u_n=1$. Let $\EHS^\vee_*(X)$ denote $\pi_*L_{\kn}(\EHS\Tensor X)$. 
  Given the way in which the cohomology theories $\EHS_*$ and $\Ep_*$
  are defined (i.e., by Landweber exactness \cite[p.~4]{rHS}
  \cite{rS}), for any
  spectrum $X$ there are isomorphisms
  \begin{equation}\label{BigVsSmall}
    \begin{aligned}
      \Ep_*(X)&\iso\Ep_*\Tensord_{\EHS_*}\EHS_*(X)\\
      \Eph_*(X)&\iso\Ep_*\Tensord_{\EHS_*}\EHS_*^\vee(X)\,.
    \end{aligned}
  \end{equation}
  Hovey and Strickland show that conditions (1) and (2) and (3) of the
  proposition hold if and only if $\EHS^\vee_*(X)$ is isomorphic to
  $\EHS_*$ (up to suspension).  The proof is completed by observing that,
  in view of \ref{BigVsSmall}, $\Eph_*(X)$ is isomorphic 
  to $\Ep_*$ (up to suspension) if and only if $\EHS^\vee_*(X)$
  is equivalent to $\EHS_*$ (up to suspension). Similar considerations
  apply to $\Ep^*$.
\end{proof}

\begin{prop} \label{IisInvertible}
  The \kn-local spectrum \IIRhat/ is invertible. 
\end{prop}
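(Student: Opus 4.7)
The plan is to apply Proposition~\ref{InvertibleMeansNiceK}(2): it is enough to show that $\kn_*\hat\IIR$ is a shifted copy of $\kn_*$. By Example~\ref{TwoKEquivalences}, the map $\IIR\to\hat\IIR=L_\kn\IIR$ is a $\kn_*$-equivalence, so it suffices to establish that $\kn\wedge_\Sn\IIR\simeq\Sigma^d\kn$ as a $\kn$-module.

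The key input is the Brown--Comenetz identity supplied by Proposition~\ref{KnIsBC}: $\Hom_\Sn(\kn,\IIR)\simeq\Sigma^d\kn$ as left $\kn$-modules, for some $d\ge 0$. Since $\Sn\to\kn$ has the Koszul complex $\Typen$ (Proposition~\ref{knIsKoszul}), Proposition~\ref{Proxy} shows that $\IIR$ is effectively constructible from $\kn$; setting $\EC=\End_\Sn(\kn)$, this combines with the BC identification to give
\[
\IIR\;\simeq\;\Hom_\Sn(\kn,\IIR)\wedge_\EC\kn\;\simeq\;\Sigma^d\kn\wedge_\EC\kn,
\]
where the first factor is viewed as a right $\EC$-module via precomposition with endomorphisms of $\kn$. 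Smashing with $\kn$ over $\Sn$ and rearranging yields
\[
\kn\wedge_\Sn\IIR\;\simeq\;\Sigma^d(\kn\wedge_\Sn\kn)\wedge_\EC\kn,
\]
and the concluding step is to identify $(\kn\wedge_\Sn\kn)\wedge_\EC\kn$ with $\kn$. This is a Morita-style collapse using the equivalence, implicit in Proposition~\ref{Proxy}, between right $\EC$-modules and $\kn$-cellular $\Sn$-modules: under this equivalence, the right $\EC$-action on $\kn\wedge_\Sn\kn$ transported from the BC identification becomes exactly the one that returns a single shifted copy of $\kn$ upon tensoring back against $\kn$ over $\EC$.

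The chief technical work is the bimodule bookkeeping required for this final Morita collapse: one must track the several interlocking left/right $\kn$- and $\EC$-module structures and verify that the right $\EC$-action on $\kn\wedge_\Sn\kn$ implicit in the identification $\Hom_\Sn(\kn,\IIR)\simeq\Sigma^d\kn$ is indeed the one that produces the desired collapse rather than some twisted variant. The potential twist is exactly the phenomenon measured by the classification Theorem~\ref{GHClassifyBC}; since that theorem will show the collection of possible twists is itself the Picard group of invertible $\kn$-local spectra, tracking the bookkeeping honestly only enlarges the conclusion from ``$\hat\IIR\simeq\Sigma^d\hat\Sn$'' to ``$\hat\IIR$ is invertible,'' which is exactly what Proposition~\ref{InvertibleMeansNiceK} asks us to prove. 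Once the bookkeeping is carried out, invertibility is immediate.
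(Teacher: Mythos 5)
Your setup is sound as far as it goes: Propositions~\ref{KnIsBC}, \ref{knIsKoszul}, and \ref{Proxy} do give $\IIR\simeq\Hom_{\Sn}(\kn,\IIR)\Tensor_{\EC}\kn$ with $\Hom_{\Sn}(\kn,\IIR)\simeq\Sigma^d\kn$ as \emph{left} $\kn$-modules, and since $\IIR\to\IIRhat$ is a $\kn_*$-equivalence (Example~\ref{TwoKEquivalences}) it suffices to compute $\kn_*\IIR$. But the proof has a genuine gap at the step you call the Morita collapse. The equivalence in \ref{KnIsBC} is one of left $\kn$-modules only; the right $\EC$-action it transports onto $\Sigma^d\kn$ is not identified by any of the cited results, and the value of $\Sigma^d\kn\Tensor_{\EC}(\kn\Tensor_{\Sn}\kn)$ depends on that action. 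Your proposed resolution --- invoking Theorem~\ref{GHClassifyBC} to say that every possible twist still yields an invertible spectrum --- is circular in this paper's logical order: the proof of \ref{GHClassifyBC} runs through Theorem~\ref{BCequalsInvertible} and Theorem~\ref{GHShift}, and the proof of \ref{GHShift} already invokes Proposition~\ref{IisInvertible}. You cannot appeal to the classification theorem to establish the invertibility statement it rests on.

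The paper's actual proof is a citation to the Hovey--Strickland computation \cite[10.2(e)]{rHS}; this is external input, and it is precisely what breaks the cycle. The ``see also'' pointer to Theorem~\ref{BCequalsInvertible} indicates consistency rather than an independent internal derivation (the ``if'' direction of \ref{BCequalsInvertible} as written also quotes \ref{GHShift} to identify $\Hom(\kn,\SK)$). To make your approach non-circular you would have to analyze $\kn\Tensor_{\Sn}\kn$ as an $\EC$-bimodule and prove directly that for \emph{every} right $\EC$-action on $\Sigma^d\kn$ extending the canonical right $\kn$-action the tensor product over $\EC$ has $\kn_*$ free of rank one; that is the real content, and it is essentially the Hovey--Strickland calculation under another name.
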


\begin{proof}
  This is \cite[10.2(e)]{rHS}; see also
  Theorem~\ref{BCequalsInvertible}. 
\end{proof}

\begin{prop}\label{SelfEquivalence}
  If $I$ is an invertible \kn-local spectrum, then the functor
  $X\mapsto X\KSmash I$ gives a self-equivalence of the homotopy
  category of \kn-local spectra. In particular, for any \kn-local
  spectra $X$, $Y$ the natural map
  \[
    \Hom(X,Y)\to\Hom(X\KSmash I, Y\KSmash I)
  \]
  is an equivalence.
\end{prop}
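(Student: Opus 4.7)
The plan is essentially formal: exhibit a two-sided inverse to $-\KSmash I$ and deduce the statement about $\Hom$ by composing with this inverse. Since $I$ is invertible, I would begin by choosing a $\kn$-local spectrum $J$ together with an equivalence $I \KSmash J \weq \SKn$. Recall that $\SKn$ is the unit for $\KSmash$ on the $\kn$-local category, so for any $\kn$-local $X$ there is a natural equivalence $X \KSmash \SKn \weq X$; combined with associativity and symmetry of $\KSmash$, this yields natural equivalences
\[
     X \KSmash I \KSmash J \;\weq\; X \;\weq\; X \KSmash J \KSmash I
\]
for every $\kn$-local $X$. Hence the endofunctors $-\KSmash I$ and $-\KSmash J$ on the homotopy category of $\kn$-local spectra are mutually inverse self-equivalences, which establishes the first sentence of the proposition.

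For the statement about mapping spectra, consider the smashing maps
\[
     \Hom(X,Y) \to \Hom(X \KSmash I,\, Y \KSmash I) \to \Hom(X \KSmash I \KSmash J,\, Y \KSmash I \KSmash J)\,.
\]
Using the natural equivalence $I \KSmash J \weq \SKn$ to identify the rightmost term with $\Hom(X,Y)$, the composite is naturally equivalent to the identity. Applying the same construction starting from the middle term, with the roles of $I$ and $J$ reversed, shows that the second arrow composed with smashing by $I$ is also naturally the identity. Consequently each of the two arrows is both a split injection and a split surjection in the homotopy category of spectra, and hence an equivalence; in particular the first map is an equivalence, as required.

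The only thing that needs care is coherence: one must check that the natural equivalences used to identify $X \KSmash I \KSmash J$ with $X$ are compatible with the smashing maps on $\Hom$ so that the two composites really do reduce to the identity. This follows from the fact that $\KSmash$ makes the $\kn$-local category into a symmetric monoidal category with unit $\SKn$, and that the smashing map on $\Hom$ is by construction natural in the smash factor. No additional input beyond invertibility of $I$ and the symmetric monoidal structure on $\kn$-local spectra is required, so I do not expect any substantial obstacle here.
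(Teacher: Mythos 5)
Your proposal is correct and follows essentially the same approach as the paper: the paper's proof simply observes that the inverse functor is $X\mapsto X\KTensor J$ where $I\KTensor J\weq\SK$. You spell out the two-sided inverse and the retraction argument for the $\Hom$ statement in more detail, but the substance is identical.
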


\begin{proof}
  The inverse functor is given by $X\mapsto X\KTensor J$, where
  $I\KTensor J\weq\SK$.
\end{proof}

\begin{rem}\label{DualOfInvertible}
  If $I$ is invertible, the ``multiplicative inverse''
  $J$ of $I$ is given by $J=\Hom(I,\SK)$. This can be derived from the
  chain of equivalences
  \[
    J\weq\Hom(\SK,J)\weq \Hom(I\KTensor\SK,I\KTensor J)\weq
    \Hom(I,\SK)\,.
  \]
\end{rem}

\begin{prop}\label{InvertibleMeansDualizable}\cite[10.6]{rHS}
  If $I$ is an invertible \kn-local spectrum, then for any spectrum
  $X$, the natural map $\Hom(X,\SK)\KSmash
  I\to\Hom(X, I)$ is an equivalence.
\end{prop}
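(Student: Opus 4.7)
The plan is to identify both sides of the natural map with a common third object, namely $\Hom(X \hatSmash J, \SK)$, where $J$ is the $K_n$-local inverse of $I$. Concretely, by Remark~\ref{DualOfInvertible} one can take $J = \Hom(I, \SK)$; applying the same remark to $J$ (which is itself invertible with inverse $I$) gives $I \simeq \Hom(J, \SK)$. Both identifications will be constructed from the tensor-hom adjunction together with a single auxiliary equivalence established using Proposition~\ref{SelfEquivalence}.

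The auxiliary equivalence I would prove first asserts that, for every $K_n$-local spectrum $Y$, there is a natural equivalence $Y \hatSmash I \simeq \Hom(J, Y)$. To see this, apply Proposition~\ref{SelfEquivalence} with the invertible spectrum $J$, source $\SK$, and target $Y \hatSmash I$; this gives
\[
Y \hatSmash I \;\simeq\; \Hom(\SK, Y \hatSmash I) \;\xrightarrow{\weq}\; \Hom(\SK \hatSmash J, Y \hatSmash I \hatSmash J),
\]
which simplifies to $\Hom(J, Y)$ using $\SK \hatSmash J \simeq J$ and $I \hatSmash J \simeq \SK$. Conceptually this is the standard fact that an invertible object in a closed symmetric monoidal category is dualizable, with dual given by its inverse.

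With the lemma in hand, the two identifications are parallel. Applying the lemma to $Y = \Hom(X, \SK)$ (which is $K_n$-local, since $\SK$ is) and then invoking tensor-hom adjunction gives
\[
\Hom(X, \SK) \hatSmash I \;\simeq\; \Hom(J, \Hom(X, \SK)) \;\simeq\; \Hom(X \hatSmash J, \SK),
\]
while using $I \simeq \Hom(J, \SK)$ and the same adjunction gives
\[
\Hom(X, I) \;\simeq\; \Hom(X, \Hom(J, \SK)) \;\simeq\; \Hom(X \hatSmash J, \SK).
\]

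The substantive remaining point---and the step I expect to require the most care---is to verify that the composite of these identifications, running from $\Hom(X, \SK) \hatSmash I$ through $\Hom(X \hatSmash J, \SK)$ back to $\Hom(X, I)$, actually coincides with the canonical evaluation map of the proposition, rather than differing from it by some Picard-group twist. This amounts to a diagram chase through the various adjunctions together with the naturality clauses of Proposition~\ref{SelfEquivalence}; once it is carried out, the equivalence in the statement follows.
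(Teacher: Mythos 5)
Your proof is correct and uses the same ingredients as the paper's argument: pick an inverse $J$, apply Proposition~\ref{SelfEquivalence} to slide factors of $I$ and $J$ in and out of mapping spectra, and use tensor-hom adjunction. The paper simply runs the chain in a single direction, from $\Hom(X,I)$ through $\Hom(J,\Hom(X,\SK))$ to $\Hom(J\KSmash I,\Hom(X,\SK)\KSmash I)\weq\Hom(X,\SK)\KSmash I$, whereas you factor through the common middle object $\Hom(X\hatSmash J,\SK)$ and invoke $I\weq\Hom(J,\SK)$ from Remark~\ref{DualOfInvertible}; the two organizations are equivalent, and the concern you raise at the end about matching the composite with the canonical evaluation map is a legitimate one that the paper also leaves implicit.
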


\begin{proof}
  Pick a \kn-local $J$ such that $I\KSmash J\weq\SK$. 
  Now use \ref{SelfEquivalence} to compute
  \[
    \begin{aligned}
    \Hom(X,I)&\weq\Hom(X\KSmash J,I\KSmash J)\\
    &\weq\Hom(J,\Hom(X,\SK))\\
    &\weq\Hom(J\KSmash I,\Hom(X,\SK)\KSmash I)\,
   \end{aligned}
  \]
  and note that the final spectrum is $\Hom(X,\SK)\KSmash I$.
\end{proof}

\begin{thm}\label{EtoSKisGor}\label{DualOfE}
  \cite[Prop.~16]{rS} 
  There is a weak equivalence
  \begin{equation}
            \DK\Ep\weq \Sigma^{-n^2}\Ep
  \end{equation}
  of left \En-modules, which respects the actions of $\Gamma$ on both
  sides.
\end{thm}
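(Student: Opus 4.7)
The plan is to deduce the duality from two structural facts about $\Ep$ over $\SK$ that are recorded in \cite{rS} and partly recalled in the body of the paper: first, the derived endomorphism spectrum $\End_{\SK}(\Ep)$ is equivalent to the completed twisted group ring $\Ep\lbbracket\Gamma\rbbracket$ (cited in the text following \ref{SimpleNaturality}), where $\Gamma$ is the Morava stabilizer group; and second, the Devinatz--Hopkins equivalence $\SK\weq\Ep^{h\Gamma}$, which exhibits $\SK\to\Ep$ as a continuous pro-Galois extension with Galois group $\Gamma$. Together these say that, up to completion, $\Ep$ is ``finite \'etale'' over $\SK$ with Galois group $\Gamma$.

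Granted these inputs, the first step is to manipulate
\[
\DK\Ep \,=\, \Hom_{\SK}(\Ep,\SK) \,\weq\, \Hom_{\SK}(\Ep,\Ep^{h\Gamma}) \,\weq\, \Hom_{\SK}(\Ep,\Ep)^{h\Gamma} \,\weq\, \Ep\lbbracket\Gamma\rbbracket^{h\Gamma}\,,
\]
using in the middle step that $\Hom_{\SK}(\Ep,-)$ commutes with the homotopy limit defining $(-)^{h\Gamma}$. The right-hand side is the continuous $\Gamma$-cohomology of $\Ep\lbbracket\Gamma\rbbracket$, where $\Gamma$ acts diagonally (by the natural action on the target $\Ep$ of $\Hom_{\SK}(\Ep,\Ep)$, which translates under the identification with $\Ep\lbbracket\Gamma\rbbracket$ to a conjugation-like action). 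This reduces the problem to a profinite group-cohomological computation.

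The second step is to carry out that computation. The profinite group $\Gamma$ is a compact $p$-adic Lie group of dimension $n^2$, so (virtually) a Poincar\'e duality profinite group of that dimension. Since $\Ep\lbbracket\Gamma\rbbracket$ is, as a $\Gamma$-module under left translation, co-induced from the trivial subgroup, Shapiro's lemma together with Poincar\'e duality for $\Gamma$ shows that the continuous $\Gamma$-cohomology of $\Ep\lbbracket\Gamma\rbbracket$ (with the above diagonal action) is concentrated in top degree $n^2$, where it is canonically isomorphic as a left $\Ep$-module to $\Ep$. Unwinding the chain of equivalences gives $\DK\Ep\weq\Sigma^{-n^2}\Ep$ of left $\Ep$-modules, and the $\Gamma$-equivariance is automatic from the naturality of each step, since every equivalence above is functorial in $\Ep$ with respect to $\Gamma$-actions.

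The main obstacle I anticipate is the Poincar\'e duality input for $\Gamma$: for small primes $p$ relative to $n$, $\Gamma$ carries $p$-torsion and is not literally a Poincar\'e duality profinite group, so one must pass to a torsion-free open subgroup $\Gamma'\subset\Gamma$ (which is a Poincar\'e duality group of dimension $n^2$ in Lazard's sense) and then descend along the finite quotient $\Gamma/\Gamma'$; it is precisely in this finite descent that the integer $n^2$ and the absence of any overall twist on $\Ep$ (in contrast to the determinant twist appearing in \ref{GHTheorem} and \ref{GHDifferent}, which arises at a different level) have to be verified with care. A secondary technical issue is ensuring that the derived constructions $\Hom_{\SK}(\Ep,\Ep)^{h\Gamma}$ and $\Ep\lbbracket\Gamma\rbbracket^{h\Gamma}$ are interpreted in a sufficiently continuous sense that the chain of $\Hom$-manipulations is actually valid at the spectrum level.
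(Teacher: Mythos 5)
Your argument is correct in outline and ultimately rests on the same cohomological input as the paper's proof, namely that $H^*_c(\Gamma,\Ep_*\lbbracket\Gamma\rbbracket)$ is concentrated in degree $n^2$ where it is $\Ep_*$ (cited in the paper as \cite[Prop.~5]{rS}), but the route you take is genuinely different. You start from the Devinatz--Hopkins equivalence $\SK\weq\Ep^{h\Gamma}$ and commute $\Hom_{\SK}(\Ep,-)$ through the continuous homotopy fixed points to land on $\End_{\SK}(\Ep)^{h\Gamma}$. The paper avoids Devinatz--Hopkins entirely: it observes that the natural double-dual map $\Hom(\Ep,X)\to\Hom_{\EndE}(\Hom(X,\Ep),\EndE)$ is an equivalence for $X=\Ep$, hence (using \ref{SisFinite} and that both sides respect cofibration sequences) for $X=\SK$, and then computes $\pi_*\Hom_{\EndE}(\Ep,\EndE)$ by a universal-coefficient spectral sequence over $\pi_*\EndE\iso\Ep_*\lbbracket\Gamma\rbbracket$ and a change of rings. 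Your approach makes the Galois descent picture explicit and is conceptually attractive, but it carries a heavier external dependence (Devinatz--Hopkins) that this paper never invokes; the paper's argument stays inside the Koszul-complex/finiteness framework it has already built. The two technical cautions you flag are both real and nontrivial: the commutation of $\Hom_{\SK}(\Ep,-)$ with the \emph{continuous} $(-)^{h\Gamma}$ is not a formal homotopy-limit exchange, since $\Ep^{h\Gamma}$ is modeled by a cosimplicial object involving $\Map_c(\Gamma^{\times s},\Ep)$ and one needs the $K(n)$-local compactness of $\Ep$ (essentially \ref{SisFinite} again) to pass $\Hom_{\SK}(\Ep,-)$ inside those continuous mapping spectra; and for small $p$ the group $\Gamma$ has torsion, so the Poincar\'e duality statement has to be deduced from a torsion-free open subgroup together with finite descent, which is exactly how Strickland establishes the fact the paper cites.
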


\begin{proof}
   Much of the content of this proof is in the technical details, but
  we will sketch the argument. Let $\EndE=\End(\EPP)$. Note that
the natural map
 \begin{equation}\label{ESpectralSeq}
     \Hom(\EPP,X)\to \Hom_{\EndE}(\Hom(X,\EPP), \Hom(\EPP,\EPP))
  \end{equation}
  is a weak equivalence for $X=\EPP$.  Since $\SK$ is finitely built
  from \En/ (\ref{SisFinite}) and both sides of \ref{ESpectralSeq}
  respect cofibration sequences in $X$, it follows that
  \ref{ESpectralSeq} is an equivalence for $X=\SK$. This results in a
  strongly convergent
  Adams spectral sequence
  \[
  E^2_{*,*}=\Ext^*_{\EPP_*\lbbracket\Gamma\rbbracket}
  (\EPP_*,\EPP_*\lbbracket\Gamma\rbbracket)\Rightarrow\pi_*\Hom(\EPP,\SK)\,.
  \] 
  By a change of rings, the $E^2$-page is isomorphic to the continuous
  cohomology $H_c^*(\Gamma,\EPP_*\lbbracket\Gamma\rbbracket))$.  Since
  $\Gamma$ is a Poincar\'e duality group of dimension $n^2$, this
  continuous cohomology vanishes except in homological
  degree~$n^2$, where it is isomorphic to $\Ep_*$
  \cite[Prop.~5]{rS}.
\end{proof}

\begin{rem}\label{EDoubleDual}
  It follows from \ref{DualOfE} that the natural map
  $\rho:\EPP\to \DK^2\EPP$ is an equivalence.
  To see this, let $f:\Sigma^{-n^2}\EPP\to\SK$ be a map
  which corresponds under \ref{DualOfE} to the unit in $\EPP_0$. The
  adjoint of the equivalence $\Sigma^{-n^2}\EPP\to\Hom(\EPP,\SK)$ is then
  the composite
  \begin{equation}\label{FirstAdjoint}
       (\Sigma^{-n^2}\EPP)\Tensor \EPP
      \RightArrow{m}\Sigma^{-n^2}\EPP\RightArrow{f}\SK\,,
  \end{equation}
  where $m$ is obtained from the multiplication on $\EPP$. 
 Consider
  the following two maps
  \[\rho,\lambda:\EPP\to\DK^2\EPP\weq\Hom(\Sigma^{-n^2}\EPP,\SK)\,,\]
  which we will specify by giving their adjoints
  $\EPP\Tensor\Sigma^{-n^2}\EPP\to\SK$.  The adjoint of $\rho$ is the
  composite of \ref{FirstAdjoint} with the transposition map
  $\EPP\Tensor(\Sigma^{-n^2}\EPP)\to (\Sigma^{-n^2}\EPP)\Tensor \EPP$;
  the adjoint of $\lambda$ is obtained by shifting the suspension
  coordinate in \ref{FirstAdjoint} from one tensor factor to the
  other. The map $\lambda$ is an equivalence because the adjoint of
  \ref{FirstAdjoint} is an equivalence.  The fact that $\rho$ is an
  equivalence now follows from the fact that $\EPP$ is a commutative
  \salgebra/.
\end{rem}

\section{Gross-Hopkins duality}\label{SGrossHopkins}

In this section we prove the main statements involved in Gross-Hopkins
duality, except, of course, for the Gross-Hopkins calculation itself
(\ref{GHTheorem}). We rely heavily on \cite{rHS} and \cite{rS}.

  \begin{titled}{Proof of \ref{GHShift}}
    By \ref{TwoKEquivalences}, $\Hom(\kn,\Sn)$ is equivalent to
    $\Hom(\kn,\SK)$.  Use \ref{IisInvertible} and
    \ref{SelfEquivalence} to obtain an equivalence
    $\Hom(\kn,\SK)\weq\Hom(\kn\KSmash\IIRhat,\IIRhat)$,  observe
    (\ref{InvertibleMeansNiceK}) that $\kn\KSmash\IIRhat$ is
    equivalent to \kn, and invoke \ref{KnIsBC} to evaluate
    $\Hom(\kn,\IIRhat)\weq\Hom(\kn,\IIR)$.
  \end{titled}

  \begin{titled}{Proof of \ref{GHSame}}
    For the statment involving \IIRG, note that the map $\IIRG\to\Sn$
    is a $\Cell_{\kn}$-equivalence, and therefore
    (\ref{TwoKEquivalences}) an equivalence on $\kn_*$ or $\Eph_*$. It
    follows that $\Eph_*\IIRG$ is isomorphic to $\Eph_*\Sn\iso\Ep_*$,
    even as modules over $\Gamma$.  The statement involving $\IIR$ is
    a consequence of \ref{IisInvertible} and
    \ref{InvertibleMeansNiceK}, since the localization map
    $\IIR\to\IIRhat$ induces an isomorphism on $\kn_*$ or $\Eph_*$.
    \qed
  \end{titled}
  
  \begin{titled}{Proof of \ref{SimpleNaturality}}
    For the first isomorphism, observe that because $\Typen$ is
    finitely built from \kn/ \cite[8.12]{rHS} and $\IIRG\to\Sn$ is a
    $\Cell_{\kn}$-equivalence, $\Dual\IIRG(\Typen)$ is equivalent to
    $\Dual\Sn(\Typen)$. Since $\Typen$ is finitely built from \Sn, the
    usual properties of Spanier-Whitehead duality give an equivalence
  \[
        \Ep\Tensor\Dual\Sn(\Typen)\weq\Hom(\Typen,\Ep)\,.
  \]
  It follows from \ref{GHShift} that $\Dual\Sn(\Typen)$ is also
  finitely built from \kn, which implies that
  $\Ep\Tensor\Dual\Sn(\Typen)$ is \kn/-local and hence equivalent to
  $\Ep\KTensor\Dual\Sn(\Typen)$. Combining these obervations gives an
  equivalence $\Ep\KTensor\Dual\IIRG(\Typen)\weq\Hom(\Typen,\Ep)$, so
  that $\Eph_i\Dual\IIRG(\Typen)$ is isomorphic to $\Ep^{-i}(\Typen)$.
  There is a strongly convergent universal coefficient spectral
  sequence
  \[
     \Ext^*_{\Ep_*}(\Ep_*\Typen,\Ep_*)\Rightarrow \Ep^*(\Typen)\,.
  \]
  Since $\Ep_*$ is isomorphic as a graded $\Ep_*$-module
  to $\Ext^0_{\Ep_0}(\Ep_*,\Ep_0)$, a standard change of rings
  argument (Shapiro's lemma) produces a spectral sequence
  \[
  \Ext^i_{\Ep_0}(\Ep_j\Typen,
  \Ep_0)\Rightarrow\Ep_{-j-i}\Dual\IIRG(\Typen)\,.
  \]
  But $\Ep_0\to\Fpn$ is Gorenstein and each group $\Ep_j\Typen$ has a
  finite composition series in which the successive quotients are
  isomorphic, as $\Ep_0$-modules, to $\Fpn$. This implies that the
  above $\Ext$-groups vanish except for $i=n$, which leads to the
  desired result.

  For the second isomorphism, observe that there are equivalences
   \begin{equation}\label{DualityChain}
   \begin{aligned}
      \Hom(\Ep\KSmash \Typen, \IIRhat)&\weq
      \Hom(\Typen,\Hom(\Ep,\IIRhat))\\
       &\weq \Hom(\Typen,\Sigma^{-n^2}\Ep\KSmash\IIRhat)\\
       &\weq\Sigma^{-n^2}\Ep\KSmash\Hom(\Typen,\IIRhat)     
   \end{aligned}
   \end{equation}
   where the second equivalence comes from combining \ref{EtoSKisGor}
   with \ref{InvertibleMeansDualizable}, and the third from the fact
   that $\Typen$ is finite. Since $\Typen$ is finitely built out of
   \kn, $\Ep\KSmash\Typen\weq\Ep\Tensor\Typen$ is built out of \kn, and
   the homotopy groups of the initial spectrum in the chain
   \ref{DualityChain} are the Pontriagin duals of $\Eph_*\Typen$
   (\ref{IIRGivesPontriagin}). The proof is completed by noting that
   the homotopy groups of the terminal spectrum in \ref{DualityChain}
   are given by $\Eph_*\Dual\IIR \Typen$. \qed
  \end{titled}

  \begin{titled}{Proof of \ref{TwistedDifference}}
    It follows from \ref{IisInvertible},
    \ref{InvertibleMeansDualizable} and the argument in the proof of
    \ref{SimpleNaturality} that for any finite complex $\Typen$ of
    type~$n$ there is are equivalences
    $\Dual\IIRG(\Typen)\weq\Dual\Sn(\Typen)\Tensor\IIRG$ and
    $\Dual\IIR(\Typen)\weq\Dual\Sn(\Typen)\Tensor\IIR$. This gives
    Kunneth isomorphisms
  \[
       \begin{aligned}
       \Eph_*\Dual\IIR(\Typen)&\iso\Eph_*\Dual\Sn(\Typen)\Tensord_{\Ep_*}\Eph_*\IIRG\\
       \Eph_*\Dual\IIR(\Typen)&\iso\Eph_*\Dual\Sn(\Typen)\Tensord_{\Ep_*}\Eph_*\IIR
       \end{aligned}
  \]
  of modules over $\Ep\lbbracket\Gamma\rbbracket$.
  Let \EHS/ be the spectrum described in the proof of
  \ref{InvertibleMeansNiceK}. Call an ideal $J
  \subset\EHS_*$
  \emph{admissible} if it has the form
  $(p^{a_0},v_1^{a_1},\ldots,v_{n-1}^{a_{n-1}})$. As described in
  \cite[\S4]{rHS}, there exists a family $\{J_\alpha\}$ of
  admissible ideals, such that $\cap_kJ_\alpha=0$, and such that for
  each $\alpha$ there exists a finite complex $\Typen_\alpha$ of type~$n$
  with $\EHS_*\Typen_\alpha\iso\EHS_*/J_\alpha$. Under the inclusion
  $\EHS_*\to\Ep_*$ we can treat $J_\alpha$ as an ideal of $\Ep_*$ and
  obtain (\ref{BigVsSmall}) $\Eph_*Y_\alpha\iso\Ep_*/J_\alpha$.
  Let $X_\alpha=\Dual{\Sn}\Typen_\alpha$, so that
  $\Typen_\alpha\weq\Dual{\Sn}X_\alpha$.
  Then there are isomorphisms
  \[
       \begin{aligned}
       \Eph\Dual\IIRG X_\alpha&\iso \Eph_*(\IIRG)/J_\alpha\\
       \Eph\Dual\IIR X_\alpha&\iso\Eph_*(\IIR)/J_\alpha\,.
       \end{aligned}
  \]
  The proof is completed by combining these isomorphisms with
  \ref{SimpleNaturality} and passing to the limit in $J_\alpha$
  \cite[4.22]{rHS}.
  \end{titled}

\section{Invertible modules}

The aim of this section is to prove \ref{GHClassifyBC}. We begin with
an extension of \ref{InvertibleMeansNiceK}.

\begin{thm}\label{BCequalsInvertible}
  A \KPP-local spectrum $I$ is invertible if and only if $\Hom(\kn,
  I)$ is equivalent to $\kn$ (up to suspension)
\end{thm}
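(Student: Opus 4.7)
The plan is to handle the two implications of the biconditional separately. The forward implication will follow directly from \ref{InvertibleMeansDualizable} and the Gorenstein condition \ref{GHShift}; the converse, which is more delicate, I plan to reduce to a Picard-style uniqueness statement for Brown-Comenetz dualizing modules over the commutative $\salgebra$ $\EPP$.

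For the forward direction, suppose $I$ is invertible. The Gorenstein condition \ref{GHShift}, combined with \ref{RecognizeGorenstein} (applicable via the Koszul complex $\Typen$ of \ref{knIsKoszul}), produces an equivalence $\Hom(\kn,\SK)\weq\Sigma^{d_0}\kn$ of left $\kn$-modules. Applying \ref{InvertibleMeansDualizable} to the invertible $I$ with $X=\kn$ yields $\Hom(\kn,I)\weq\Hom(\kn,\SK)\KSmash I\weq\Sigma^{d_0}\kn\KSmash I$, and since $\kn\KSmash I$ is a suspension of $\kn$ by \ref{InvertibleMeansNiceK}(2), so is $\Hom(\kn,I)$.

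For the reverse, suppose $\Hom(\kn,I)\weq\Sigma^{d}\kn$. The goal is to show $\Eph_{*}I$ is a shifted copy of $\EPP_{*}$, so that \ref{InvertibleMeansNiceK}(4) gives invertibility. Set $M:=\Hom_{\SPP}(\EPP,I)$; the dualizability of $\EPP$ in the $\kn$-local category, with $\DK\EPP\weq\Sigma^{-n^{2}}\EPP$ (by \ref{DualOfE} and \ref{EDoubleDual}), yields $\EPP\KSmash I\weq\Sigma^{n^{2}}M$. The change-of-rings adjunction
\[
\Hom_{\EPP}(\kn,M)\weq\Hom_{\SPP}(\EPP\Tensor_{\EPP}\kn,I)\weq\Hom_{\SPP}(\kn,I)\weq\Sigma^{d}\kn
\]
shows that $M$ is a Brown-Comenetz dualizing $\EPP$-module for the map $\EPP\to\kn$ (Gorenstein by \ref{GorAlgebraExamples}, since $\EPP_{0}\to\Fpn$ is the canonical projection of the complete regular local ring $\EPP_{0}$ to its residue field). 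The argument is completed by the remaining claim that, up to suspension, the only Brown-Comenetz dualizing $\EPP$-module is $\EPP$ itself; granting this, $M\weq\Sigma^{e}\EPP$ for some $e$, whence $\EPP\KSmash I\weq\Sigma^{e+n^{2}}\EPP$ and $\Eph_{*}I\iso\Sigma^{e+n^{2}}\EPP_{*}$.

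The principal obstacle is this uniqueness statement for Brown-Comenetz dualizing $\EPP$-modules. Because $\EPP$ is commutative and $\EPP_{0}$ is a complete regular local Noetherian ring, it is the homotopical analogue of the classical uniqueness of the Matlis dualizing module for such a ring, as in \ref{padicGor} and \ref{SFullDuality}. Lifting it to the spectrum level requires carefully tracking $\kn$-cellular approximations of $\EPP$-modules and ruling out the extra Picard twists that appear in the noncommutative setting $\SPP\to\kn$ and that are responsible for Gross--Hopkins duality.
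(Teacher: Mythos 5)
The forward implication is the same as the paper's, and it is correct. The reverse implication, however, has a genuine gap at a step you present as established.

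You claim that ``the dualizability of $\EPP$ in the $\kn$-local category, with $\DK\EPP\weq\Sigma^{-n^2}\EPP$'' gives $\EPP\KSmash I\weq\Sigma^{n^2}M$ where $M=\Hom_{\SPP}(\EPP,I)$. But $\EPP$ is \emph{not} dualizable in the $\kn$-local category: dualizable objects there are exactly those with finite $\kn_*$ (Hovey--Strickland, Theorem~8.6), and $\kn_*\EPP$ is far from finite. The fact that $\DK\EPP\weq\Sigma^{-n^2}\EPP$ (\ref{DualOfE}) and $\EPP\weq\DK^2\EPP$ (\ref{EDoubleDual}) does not imply that $\Hom(\EPP,X)\weq\DK\EPP\KSmash X$ for general $\kn$-local $X$; that statement is precisely the definition of dualizability. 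The only place the paper proves an equivalence of this shape is \ref{InvertibleMeansDualizable}, which requires $I$ to already be invertible --- so invoking it here is circular. Without this step you have no way to pass from information about $M=\Hom(\EPP,I)$ to information about $\Eph_*I$ (or any of the invariants in \ref{InvertibleMeansNiceK}).

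Two further problems. First, $M=\Hom(\EPP,I)$ is not built from $\kn$ over $\EPP$ (for instance $M\weq\Sigma^{-n^2}\EPP$ when $I=\SK$, and $\EPP$ is not a $\kn$-torsion $\EPP$-module), so $M$ itself is not a Brown--Comenetz dualizing module; you would have to work with $\Cell^{\EPP}_{\kn}M$. Relatedly, the uniqueness claim you defer to --- that ``up to suspension the only Brown--Comenetz dualizing $\EPP$-module is $\EPP$ itself'' --- cannot be right as stated, since a dualizing module must be built from $\kn$; the correct candidate is $\Cell^{\EPP}_{\kn}\EPP$, whose homotopy is the local cohomology of $\EPP_*$, not $\EPP_*$. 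Even granting uniqueness in the corrected form, you would only learn $\Cell^{\EPP}_{\kn}M\weq\Sigma^e\Cell^{\EPP}_{\kn}\EPP$, which does not obviously recover the freeness of $\Eph_*I$.

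The paper handles the reverse direction by a quite different route (Lemma~\ref{BCDoubleDual}): under the hypothesis $\Hom(\kn,I)\weq\Sigma^d\kn$, one first proves that $\kappa_X:X\to\Dual I^2X$ is an equivalence for $X=\kn$ directly, then for $X$ finitely built from $\kn$ by a thick subcategory argument, then for $X=\SK$ by using the algebra structure on $\Hom(I,I)$ together with $\kn_*\Hom(I,I)\iso\kn_*$. Double duality for $\SK$ then yields an equivalence $\Hom(\kn,\SK)\weq\Hom(I,\Dual I\kn)\weq\Hom(I,\Sigma^d\kn)$, so $\kn^*I$ is shifted free and \ref{InvertibleMeansNiceK} applies. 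This sidesteps both the dualizability of $\EPP$ and any classification of dualizing modules over $\EPP$ --- the latter classification is essentially what the surrounding results (\ref{InvertibleBijection}, \ref{GHClassifyBC}) are designed to establish, so assuming it here would be assuming a large part of what is being proved.
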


\begin{rem}
  It is easy to see that $\Hom(\kn,I)$ is equivalent to $\Sigma^d\kn$
  as a spectrum if and only if it is equivalent to $\Sigma^d\kn$ as a
  \kn-module.  
\end{rem}

\begin{lem}\label{LocalMappingSpaces}
  If $Y$ is \kn-local and $X$ is any spectrum, then $\Hom(X,Y)$ is
  \kn-local.
\end{lem}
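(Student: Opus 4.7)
The plan is to reduce $\kn$-locality of $\Hom(X,Y)$ to the corresponding property of $Y$ by an adjunction argument, using the fact that the class of $\kn$-acyclic spectra is closed under smashing with arbitrary spectra.

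First, recall the characterization that a spectrum $W$ is $\kn$-local if and only if $\Hom(Z,W) \weq \pt$ for every $\kn_*$-acyclic spectrum $Z$ (equivalently, if and only if every $\kn_*$-equivalence $A \to B$ induces an equivalence $\Hom(B,W) \to \Hom(A,W)$). So to prove the lemma, we fix an arbitrary $\kn_*$-acyclic spectrum $Z$ and aim to show that $\Hom(Z, \Hom(X,Y)) \weq \pt$.

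Next, the key reduction is the standard tensor–hom adjunction in the stable category, which gives a natural equivalence
\[
\Hom(Z, \Hom(X,Y)) \weq \Hom(Z \Tensor X, Y).
\]
Since $Y$ is $\kn$-local, the right-hand side is contractible as soon as we know that $Z \Tensor X$ is $\kn_*$-acyclic. But $\kn_*$-acyclicity is preserved by smashing with an arbitrary spectrum: $\kn \Tensor (Z \Tensor X) \weq (\kn \Tensor Z) \Tensor X \weq \pt \Tensor X \weq \pt$, using the associativity and symmetry of the smash product together with the assumption that $\kn \Tensor Z \weq \pt$.

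There is no serious obstacle here; the only thing worth noting is that the argument uses nothing about $\kn$ beyond the general fact that Bousfield localization at a homology theory represented by a spectrum has the class of acyclics closed under smashing with arbitrary objects. So the lemma is really a formal property of smashing localizations, and the proof occupies only a few lines once the adjunction and the acyclicity characterization of locality are in hand.
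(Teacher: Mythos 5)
Your proof is correct, and it uses the same basic ingredients as the paper's proof (the tensor--hom adjunction together with the characterization of locality via acyclics), but it deploys the adjunction in a slightly different way. The paper rewrites $\Hom(A,\Hom(X,Y))$ as $\Hom(X,\Hom(A,Y))$ --- swapping the order of the two mapping arguments --- and then kills $\Hom(A,Y)$ immediately because $Y$ is $\kn$-local. You instead uncurry to get $\Hom(Z\Tensor X, Y)$ and then invoke the fact that the class of $\kn_*$-acyclics is closed under smashing with an arbitrary spectrum. Both are valid; the paper's route is marginally leaner because it needs only the adjunction and the locality of $Y$, while yours additionally uses (the easy, standard fact of) closure of acyclics under smash. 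As you note, both arguments are formal properties of smashing/Bousfield localization and have nothing to do with $\kn$ specifically.
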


\begin{proof}
  It is necessary to show that if $A$ is \kn-acyclic, then
  $\Hom(A,\Hom(X,Y))$ is contractible. But this spectrum is
  equivalent to $\Hom(X,\Hom(A,Y))$, and $\Hom(A,Y)$ is contractible
  because $Y$ is \kn-local.
\end{proof}

\begin{lem}\label{BCDoubleDual}
  Suppose that $I$ is a \kn-local spectrum such
  that $\Hom(\kn,I)$ is equivalent to a suspension of \kn.
  Then the natural map $\kappa_X: X\to\Dual I^2(X)$ is an equivalence for
  $X=\kn$ and $X=\SK$.
\end{lem}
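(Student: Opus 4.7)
The plan is to treat the two cases $X = \kn$ and $X = \SK$ separately.

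For $X = \kn$, I would let $\alpha : \Sigma^d \kn \xrightarrow{\sim} \Hom(\kn, I)$ be the given equivalence (also one of $\kn$-modules, by the remark). Applying $\Hom(-, I)$ identifies
\[ \Dual I^2(\kn) \simeq \Hom(\Sigma^d \kn, I) \simeq \Sigma^{-d}\Hom(\kn, I) \simeq \kn, \]
so the target of $\kappa_\kn$ is abstractly equivalent to $\kn$. I will then unravel the adjunction to check that $\kappa_\kn$ itself realizes this equivalence: by definition $\kappa_\kn$ is adjoint to the evaluation $\kn \wedge \Hom(\kn, I) \to I$, which, after substitution along $\alpha$, becomes a pairing $\mu : \kn \wedge \Sigma^d \kn \to I$. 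The ``$\Sigma^d\kn$-first'' adjoint of $\mu$ is $\alpha$ (an equivalence by hypothesis), while the ``$\kn$-first'' adjoint, under the identifications above, is $\kappa_\kn$. Using the symmetric braiding $\tau: \kn \wedge \Sigma^d \kn \xrightarrow{\sim} \Sigma^d \kn \wedge \kn$, which is always an equivalence of spectra, I can relate the two adjoints of $\mu$ and conclude that both are simultaneously equivalences.

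For $X = \SK$, I first observe that because $I$ is $\kn$-local and $\Sphere \to \SK$ is a $\kn_*$-equivalence, the induced map $\Hom(\SK, I) \to \Hom(\Sphere, I) = I$ is an equivalence. Thus $\Dual I^2(\SK) \simeq \Hom(I, I)$, and $\kappa_{\SK}$ becomes the unit map $\SK \to \Hom(I, I)$ sending $1 \in \pi_0 \SK$ to $1_I$. Both $\SK$ and $\Hom(I, I)$ are $\kn$-local (the latter by \ref{LocalMappingSpaces}), so by \ref{TwoKEquivalences} it suffices to verify that $\kappa_{\SK}$ is a $\kn_*$-equivalence. I would then compute
\[ \Hom(\kn, \Hom(I, I)) \simeq \Hom(I, \Hom(\kn, I)) \simeq \Hom(I, \Sigma^d \kn), \]
and match this with $\Hom(\kn, \SK) \simeq \Sigma^{d_0}\kn$ (from the Gorenstein property of $\SK \to \kn$ in \ref{GHShift}) by applying a dualized form of the Step 1 adjunction chase, in which $I$ plays the role symmetric to $\kn$.

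The main obstacle I anticipate lies in the $X = \SK$ case: rigorously identifying $\Hom(I, \Sigma^d \kn)$ with the expected shift of $\kn$ requires a ``dual reflexivity'' argument applied to $I$, combining the hypothesis with the Koszul complex formalism of \ref{knIsKoszul} and \ref{TwoKEquivalences}. The $X = \kn$ case, by contrast, should be a mechanical adjunction chase once one appeals to the symmetric monoidal structure on spectra.
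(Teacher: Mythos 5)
Your treatment of $X=\kn$ is essentially the paper's: the paper likewise interprets $\kappa_{\kn}$ via the adjoint pairing $\kn\Tensor\kn\to I$ built from a generator $f$ of $\pi_*\Hom(\kn,I)$ over $\kn_*$, and compares the two adjoints. One caveat: $\kn$ is \emph{not} a commutative \salgebra/, so the symmetric braiding $\tau$ does not intertwine the multiplication $m$ with itself but rather with $m^{\mathrm{op}}$. The clean way to close this (which is what the paper does) is to observe that $\Hom(\kn,I)$ is free of rank~1 over $\kn_*$ both as a \emph{left} and as a \emph{right} $\kn_*$-module; since both adjoints send $1\in\kn_0$ to the class of $f$, each is a nonzero $\kn$-module map between rank~1 free modules over the graded field $\kn_*$, hence an equivalence. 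That is a small repair, not a change of strategy.

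The $X=\SK$ case is where your plan genuinely breaks down, and you have half-noticed it. You reduce correctly to showing the unit map $\SK\to\Hom(I,I)$ is a $\kn_*$-equivalence and compute $\Hom(\kn,\Hom(I,I))\weq\Hom(I,\Hom(\kn,I))\weq\Hom(I,\Sigma^d\kn)$. But there is no way to evaluate $\Hom(I,\kn)$, i.e.\ $\kn^*I$, directly from the hypothesis: the hypothesis controls $\pi_*\Hom(\kn,I)$, which is a different invariant. Indeed, knowing that $\kn^*I$ is a shift of $\kn_*$ is, by \ref{InvertibleMeansNiceK}, exactly the statement that $I$ is invertible, which is what Theorem \ref{BCequalsInvertible} (the application of this lemma) is trying to prove. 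So the ``dual reflexivity'' step you flag as the obstacle would make the argument circular. The paper sidesteps this entirely: from the $\kn$ case it deduces, by a thick-subcategory argument, that $\kappa_{\Typen}$ is an equivalence for a finite complex $\Typen$ of type~$n$; then, because $\Typen$ is dualizable over $\Sn$, it identifies $\Dual I^2(\Typen)\weq\Typen\Tensor\Hom(I,I)$, so that $\kappa_{\Typen}$ forces $\kn_*\Hom(I,I)\iso\kn_*$ by the K\"unneth formula. Since $\Hom(I,I)$ is a nontrivial $\kn$-local ring spectrum, its unit map is then visibly a $\kn_*$-isomorphism. The detour through a dualizable type-$n$ object is the missing idea in your proposal; without it the $X=\SK$ case does not close.
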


\begin{proof}
  We can shift $I$ by a suspension and assume $\Hom(\kn,I)\weq \kn$.
  Let $f:\kn\to I$ be essential. Under the identification
  $\kn\weq\Hom(\kn,I)$ obtained by choosing $f$ as a generator for
  $\pi_*\Hom(\kn,I)$ as a module over $\kn_*$, the map $\kappa_{\kn}$
  is adjoint to the composite of $f$ with the multiplication map
  $\kn\Tensor\kn\to \kn$. Since $\Hom(\kn,I)$ is clearly equivalent to
  $\kn$ both as a left module and as a right module over \kn, it is
  easy to conclude that $\kappa_{\kn}$ is an equivalence
  (cf. \ref{EDoubleDual}).

  By a thick subcategory argument, $\kappa_X$ is an equivalence for
  all spectra finitely built from \kn, e.g., for a finite spectrum
  $\Typen$ of type~$n$.  Since $\Dual
  I(\Typen)\weq\Dual\Sn(\Typen)\Tensor I$ and $\Typen\weq\Dual\Sn
  ^2(\Typen)$, the spectrum $\Dual I^2(\Typen)$ can be identified with
  $\Typen\Tensor\Hom(I,I)$. It follows that
  $\kn_*(\Typen)\iso\kn_*(\Typen)\Tensord_{\kn_*}\kn_*\Hom(I,I)$ and
  hence that $\kn_*\Hom(I,I)\iso\kn_*$. The spectrum $\Hom(I,I)$ is
  \kn-local (\ref{LocalMappingSpaces}), is not contractible, and is an
  \salgebra/ under composition; it follows that the unit map
  $\Sphere\to\Hom(I,I)$ is nontrivial on $\kn_*$. Visibly, then, the
  unit map is an isomorphism on $\kn_*$ and induces an equivalence
  $\SK\to\Hom(I,I)$.  It is not hard to identify this equivalence with
  the natural map $\SK\to\Dual I^2(\SK)$ and conclude that
  $\kappa_{\SK}$ is an equivalence.
\end{proof}

\begin{titled}{Proof of \ref{BCequalsInvertible}}
  Suppose that $I$ is invertible.  Use \ref{InvertibleMeansDualizable}
  to deduce
  \[
     \Hom(\kn, I)\weq\Hom(\kn,\SK)\KTensor I
  \]
  and observe that both $\Hom(\kn, \SK)$ (\ref{GHShift}) and
  $\kn\KTensor I$ (\ref{InvertibleMeansNiceK}) are equivalent to \kn/
  up to suspension. The conclusion is that $\Hom(\kn,I)$ is equivalent
  to \kn/ up to suspension.

  Suppose on the other hand that $\Hom(\kn,I)$ is equivalent to \kn,
  up to suspension. It follows from \ref{BCDoubleDual} that the
  natural map
  \[
    \Hom(\kn,\SK)\sim\Hom(\kn,\Dual I^2\SK)\to\Hom(\Dual I \SK,\Dual I\kn)\weq\Hom(I,\Dual I\kn)
  \]
  is an equivalence. The conclusion is that $\kn^*I$ is isomorphic to
  $\kn_*$, up to suspension, and hence by \ref{InvertibleMeansNiceK}
  that $I$ is invertible.
\end{titled}

For the rest of this section, \EndE/ will denote the endomorphism
\salgebra/ $\End(\Ep)$ of \Ep. The left action of \Ep/ on itself
gives a ring map $\Ep\to\EndE$.

\begin{prop}\label{StrangeIsFinite}
  Suppose that $\EPP'$ is any right \EndE/-module which is equivalent
  as an \EPP-module to \EPP. Then $\EPP'$ is finitely built from
  $\EndE$ as a right module over $\EndE$.
\end{prop}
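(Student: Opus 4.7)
The plan is to adapt the argument, given just before the proposition, which shows that $\EPP = \Hom(\SK, \EPP)$ is finitely built from $\EndE$ as a \emph{left} $\EndE$-module. That proof applied the contravariant functor $\Hom(-, \EPP)$ to the finite building $\SK \in \mathrm{thick}(\EPP)$ supplied by Proposition \ref{SisFinite}, obtaining finite buildability of $\Hom(\SK, \EPP) \simeq \EPP$ from $\Hom(\EPP, \EPP) = \EndE$ in left $\EndE$-modules.

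For the right-module version, I would apply the contravariant functor $\Hom(-, \EPP')$ to the same finite building. The key point is that since $\EPP'$ carries a right $\EndE$-action, each $\Hom(X, \EPP')$ inherits a right $\EndE$-module structure by post-composition on the target, so $\Hom(-,\EPP')$ takes values in right $\EndE$-modules. Since this functor sends cofibration sequences contravariantly to cofibration sequences and preserves retracts and suspensions, it transports thick subcategory statements:
\[
   \Hom(\SK, \EPP') \in \mathrm{thick}\bigl(\Hom(\EPP, \EPP')\bigr)
\]
in right $\EndE$-modules.

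Next, I would identify $\Hom(\SK, \EPP') \simeq \EPP'$ as right $\EndE$-modules; this follows because $\EPP' \simeq \EPP$ as a spectrum forces $\EPP'$ to be $\kn$-local, hence $\SK$-local, and the natural evaluation $\Hom(\SK, \EPP') \to \EPP'$ respects the action inherited from the target. I would then identify $\Hom(\EPP, \EPP') \simeq \EndE$ as right $\EndE$-modules, starting from the spectrum-level equivalence $\Hom(\EPP, \EPP') \simeq \Hom(\EPP, \EPP) = \EndE$ obtained by applying $\Hom(\EPP, -)$ to the given $\EPP$-module equivalence $\EPP' \simeq \EPP$. Together these identifications give $\EPP' \in \mathrm{thick}(\EndE)$ in right $\EndE$-modules, which is the conclusion.

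The main obstacle lies in the second identification: one must verify that it is an equivalence of right $\EndE$-modules, not just of spectra or of right $\EPP$-modules. The right $\EndE$-module structure on $\Hom(\EPP, \EPP')$ inherited from $\EPP'$ via the target must match, under the equivalence induced by $\EPP' \simeq \EPP$, the canonical right $\EndE$-structure on $\EndE$ given by right multiplication. The compatibility relies essentially on the hypothesis that the right $\EndE$-action on $\EPP'$ extends the natural right $\EPP$-action, which is what pins down the extra data on $\Hom(\EPP,\EPP')$ coming from promoting $\EPP$-linearity to $\EndE$-linearity.
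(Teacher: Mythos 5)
Your plan — transport the fact that $\SK$ is finitely built from $\EPP$ (\ref{SisFinite}) through the contravariant functor $\Hom(-,\EPP')$, then identify the two endpoints — is a legitimate reduction, and the identification $\Hom(\SK,\EPP')\simeq\EPP'$ is fine. But the step you flag as ``the main obstacle,'' identifying $\Hom(\EPP,\EPP')$ with $\EndE$ as right $\EndE$-modules, carries all the actual content of the proposition, and the sentence you offer in its place is not an argument. Concretely: choose an $\EPP$-module equivalence $\phi\colon\EPP\to\EPP'$. Under the spectrum equivalence $\phi_*\colon\EndE\to\Hom(\EPP,\EPP')$ the post-composition right $\EndE$-action on the target corresponds, not to the regular right action, but to the regular action twisted by a ring map $\alpha\colon\EndE\to\EndE$ encoding the given $\EndE$-structure on $\EPP'$; the twisted module is free of rank one exactly when $\alpha$ is a weak equivalence. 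The hypothesis that the $\EndE$-action extends the $\EPP$-action constrains $\alpha$ only on the image of $\EPP\to\EndE$ — the remaining freedom is roughly $H^1(\Gamma;\EPP_0^\times)$ worth of extensions — so nothing you have said rules out a non-invertible $\alpha$, in which case your step 4 would fail.

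This is precisely the point the paper's proof addresses. It observes that any two right $\EndE$-actions on $\EPP$ extending the $\EPP$-action differ by a twist by such an $\alpha$, and then cites \cite[\S 7]{rPicard} to identify $\pi_*(\alpha)$ on $\EPP_*\lbbracket\Gamma\rbbracket$ with multiplication by a cocycle valued in the units $\EPP_0^\times$, which is therefore an isomorphism. Once $\alpha$ is known to be an equivalence, twisting by $\alpha$ preserves finite buildability, so it suffices to exhibit a single example that is finitely built; the paper takes $\Sigma^{n^2}\DK\EPP=\Sigma^{n^2}\Hom(\EPP,\SK)$, obtained by the covariant form of your thick-subcategory transport (apply $\Hom(\EPP,-)$ to the finite building of $\SK$ from $\EPP$). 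Your contravariant route is not wrong, but it reduces to the same key input — that $\pi_*(\alpha)$ is an isomorphism — and once you supply that, the paper's direct twist argument is the cleaner path.
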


\begin{proof}
  Consider two right actions $\EPP(1)$ and $\EPP(2)$ of \EndE/ on
  \EPP/ which extend the right action of $\EPP$ on itself.  Since
  \EndE/ is in fact the endomorphism \salgebra/ of $\EPP=\EPP(1)$, the
  right action of \EndE/ on $\EPP(2)$ is determined by \ansalgebra/
  homomorphism $\alpha:\EndE\to\EndE$. For any right \EndE-module $M$,
  let $M^\alpha$ denote the right \EndE-module obtained by twisting
  the action of \EndE/ on $M$ by $\alpha$, so that
  $\EPP(2)=\EPP(1)^\alpha$. As in \cite[\S7]{rPicard},
  the homomorphism $\pi_*(\alpha):
  \EPP_*\lbbracket\Gamma\rbbracket \to
  \EPP_*\lbbracket\Gamma\rbbracket$ is determined by a cocycle
  representing an element of $H^1(\Gamma;\EPP_0^\times)$, and in
  particular, $\pi_*(\alpha)$ is an isomorphism. It follows that if
  $M$ is a free right $\EndE$-module, so is $M^\alpha$; if $M$ is
  finitely built from \EndE/ as a module over \EndE, so is $M^\alpha$.
  It suffices then to find a single example of a suitable $\EPP(1)$
  which \emph{is} finitely built from $\EndE$. For this, take
  $\EPP(1)=\Sigma^{n^2}\DK\EPP$; the distinction between the left
  action of \EPP/ on $\DK\EPP$ (\ref{DualOfE}) and the corresponding
  right action is immaterial, since \EPP/ is a commutative \salgebra.
  Since \SK/ is finitely built from \EPP/ (\ref{SisFinite}),
  $\Hom(\EPP,\SK)=\DK\EPP$ is finitely built from
  $\Hom(\EPP,\EPP)=\EndE$ as a right module over \EndE.
\end{proof}

\begin{thm}\label{InvertibleBijection}
  The functor $I\mapsto\Hom(\EPP,I)$ gives a bijection between
  equivalence classes of invertible \KPP-local spectra and equivalence
  classes of right \EndE-modules which are equivalent to \EPP/, up to
  suspension, as right \EPP-modules.
\end{thm}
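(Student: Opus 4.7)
The plan is to construct an explicit inverse
\[
G\colon M\mapsto M\TensorE\EPP
\]
and show that $F$ and $G$ are mutually inverse equivalences; the proof then splits naturally into four steps.

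First, I would show that for any $\KPP$-local spectrum $I$ the natural evaluation $\Hom(\EPP,I)\TensorE\EPP\to I$ is an equivalence. Since $\SK$ is a Koszul complex for $\SK\to\EPP$ by \ref{SisFinite}, \ref{Proxy} guarantees the map is a $\Cell_{\EPP}$-equivalence; because $\SK$ is finitely built from $\EPP$, every $\SK$-module is $\EPP$-cellular, so the evaluation is an honest equivalence. This gives $G\circ F\weq\mathrm{id}$ on all $\KPP$-local spectra, in particular on the invertibles.

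Second, I would verify $F\circ G\weq\mathrm{id}$ on the target. By \ref{StrangeIsFinite}, any right $\EndE$-module $M$ equivalent to a shift of $\EPP$ as a right $\EPP$-module is finitely built from $\EndE$ as a right $\EndE$-module. The natural coevaluation $M\to\Hom(\EPP,M\TensorE\EPP)$ is an equivalence for $M=\EndE$ (both sides reducing to $\EndE$), and extends to all $M$ finitely built from $\EndE$ by a standard thick subcategory argument. Combined with the first step, this shows $F$ is injective on equivalence classes with a one-sided inverse $G$.

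Third, I would check that $F$ lands in the target. For invertible $I$, combining \ref{InvertibleMeansDualizable} (applied to $X=\EPP$) with \ref{EtoSKisGor} gives
\[
\Hom(\EPP,I)\weq\Hom(\EPP,\SK)\KSmash I\weq\Sigma^{-n^2}\EPP\KSmash I,
\]
and criterion (\ref{Eversion}) of \ref{InvertibleMeansNiceK} tells us $\EPP\KSmash I$ is free of rank one over $\EPP$ as a right $\EPP$-module (up to suspension), so $F(I)$ is in the target.

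The main obstacle is the remaining direction: showing $G(M)$ is invertible for $M$ in the target. That $G(M)$ is $\KPP$-local is automatic, since \ref{StrangeIsFinite} forces $G(M)$ to be finitely built from $\EPP$ among spectra. For invertibility, I would verify criterion~(\ref{Eversion}) of \ref{InvertibleMeansNiceK} by computing $\Eph_*G(M)$. The cocycle description from the proof of \ref{StrangeIsFinite} presents $M$ as $\EPP$ with its right $\EndE$-action twisted by an algebra endomorphism classified by a cocycle in $H^1(\Gamma;\EPP_0^\times)$; using the identification $\pi_*\EndE\iso\EPP_*\lbbracket\Gamma\rbbracket$ from the proof of \ref{EtoSKisGor} and the fact that $\pi_*M\iso\Sigma^d\EPP_*$ with the twisted $\Gamma$-action, a Künneth-type spectral sequence for the derived tensor product $M\TensorE\EPP$ collapses to yield $\Eph_*G(M)\iso\Sigma^d\EPP_*$, confirming invertibility.
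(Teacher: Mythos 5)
Your steps 1--3 are essentially the paper's argument, just packaged slightly differently: you reach the equivalence $\Hom(\EPP,X)\TensorE\EPP\weq X$ for all $\SK$-modules $X$ via \ref{Proxy} and the Koszul complex \ref{SisFinite}, whereas the paper gives the same argument directly by testing against $Y=\EPP$ and using that $\EPP$ finitely builds $\SK$. Steps 2 and 3 match the paper essentially verbatim.

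The problem is your final step, which as written contains a genuine error. A K\"unneth (Tor) spectral sequence for the derived tensor product $M\TensorE\EPP$, with input $\pi_*M$ and $\pi_*\EPP$ over $\pi_*\EndE\iso\EPP_*\lbbracket\Gamma\rbbracket$, abuts to $\pi_*(M\TensorE\EPP)=\pi_*G(M)$, not to $\Eph_*G(M)$; these are very different objects. Indeed, if the theorem is correct then $\pi_*G(M)$ is the homotopy of a twisted $\KPP$-local sphere, which is nowhere near a shift of $\EPP_*$. Concretely, the $E_2$-page $\Tor^{\EPP_*\lbbracket\Gamma\rbbracket}_*(\pi_*M,\EPP_*)$ is (up to the twist by $\alpha$) a continuous homology of $\Gamma$ with coefficients in $\EPP_*$, and by Poincar\'e duality of $\Gamma$ this is spread across homological degrees $0$ through $n^2$; the spectral sequence therefore cannot collapse onto a single line. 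Getting at $\Eph_*G(M)$ instead would require a base change along a map $\EndE\to\EPP\KSmash\EPP$ together with knowing that the target is free of rank one as a right $\EndE$-module, and $\EPP\KSmash\EPP$ is not equivalent to $\EndE$, so there is no quick route here.

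The paper sidesteps any homotopy computation of $G(M)$ and uses duality instead. Since $M$ is finitely built from $\EndE$ (\ref{StrangeIsFinite}), the spectrum $Y=G(M)$ is finitely built from $\EPP$; by \ref{EDoubleDual} and a thick-subcategory argument the biduality map $Y\to\DK^2Y$ is then an equivalence, as is $\EPP\to\DK^2\EPP$. This gives
\[
M\weq\Hom(\EPP,Y)\weq\Hom(\DK Y,\DK\EPP)\weq\Hom(\DK Y,\Sigma^{-n^2}\EPP)\,,
\]
the last equivalence by \ref{DualOfE}. Reading off homotopy groups shows $\EPP^*(\DK Y)$ is a shift of $\EPP^*$, so $\DK Y$ is invertible by criterion~(\ref{Ecversion}) of \ref{InvertibleMeansNiceK}, and then $Y=\DK(\DK Y)$ is invertible by \ref{DualOfInvertible}. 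You should replace the K\"unneth spectral sequence paragraph with this argument.
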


\begin{rem}
    The inverse bijection sends a right module $\EPP'$ of the
    indicated type to $\EPP'\Tensor_{\EndE}E$.
\end{rem}

\begin{proof}
  First observe that if $I$ is an invertible \kn-local spectrum,
  then $\Hom(\EPP,I)$ is equivalent to \Ep/ as a right \Ep-module:
  this follows from \ref{InvertibleMeansNiceK}, together with the fact
  (\ref{InvertibleMeansDualizable}, \ref{EtoSKisGor}) that there are
  equivalences 
  \[
     \Hom(\Ep, I) \weq\Hom(\Ep,\SK)\KSmash
     I\weq\Sigma^{-n^2}\Ep\KSmash I\,.
  \]
  Next, we claim that for \emph{any} \SK-module $X$, in particular for
  $X=I$, the natural map
  \[
     \Hom(\EPP,X)\Tensor_{\EndE}\EPP\to X
  \]
  is an equivalence.  To see this, fix $X$, and consider the class of
  all spectra $Y$ such that the natural map
  \begin{equation}\label{AComparison}
     \Hom(\EPP,X)\Tensor_{\EndE}\Hom(Y,\EPP)\to\Hom(Y,X)
  \end{equation}
  is an equivalence. This class certainly includes $Y=\EPP$. Since both
  sides of \ref{AComparison} respect cofibration sequences, and $\EPP$
  finitely builds \SK/ \cite[8.9, p.~48]{rHS}, the class includes
  $Y=\SK$, which gives the desired result (cf. \cite[2.10]{rDGI}).

  Now suppose that $M$ is a right \EndE-module which is equivalent to
  \Ep/ as a right \Ep-module. Let $Y=M\Tensor_{\EndE}\Ep$. We will
  show that $Y$ is invertible, and that the natural map
  \[
        M\weq M\Tensor_{\EndE}\Hom(\Ep,\Ep)\to\Hom(\Ep,M\Tensor_{\EndE}E)=\Hom(\Ep,Y)
  \]
  is an equivalence. For the second statement,
  consider the class of right \EndE-modules $X$ with the property that
  the natural map
  \begin{equation}\label{AnotherComparison}
     X\weq
     X\Tensor_{\EndE}\Hom(\EPP,\EPP)\to\Hom(\EPP,X\Tensor_{\EndE}E)
  \end{equation}
  is an equivalence. The class certainly includes the free module $X=\EndE$,
  and hence, by a thick subcategory argument, all modules finitely
  built from \EndE.  By \ref{StrangeIsFinite}, $M$ is finitely built
  from $\EndE$, and so the class includes $M$.  Again because
  $M$ is finitely built from $\EndE$, $Y$ is finitely built from
  $\EndE\Tensor_{\EndE}E\weq E$, and so (\ref{EDoubleDual}) the natural
  maps $\Ep\to\DK^2\Ep$ and $Y\to\DK^2Y$ are equivalences.  This gives
  an equivalence
  \[
    M\weq \Hom(\Ep,Y)\weq \Hom(\DK Y, \DK\Ep)\weq\Hom(\DK
    Y,\Sigma^{-n^2}\Ep)
   \,,
  \]
  where the last equivalence is from \ref{DualOfE}.
  By \ref{InvertibleMeansNiceK}(\ref{Ecversion}), $\DK Y$ is
  invertible, and so $Y=\DK(\DK Y)$ is also invertible
  (\ref{DualOfInvertible}).
\end{proof}

\begin{titled}{Proof of \ref{GHClassifyBC}}
  By \ref{RecognizeGorenstein} and \ref{knIsKoszul}, a spectrum $X$
  which is built from \kn/ is a Brown-Comenetz dualizing module for
  $\Sn\to\kn$ if and only if $\Hom(\kn, X)$ is equivalent up to
  suspension to~\kn. It then follows from \ref{TwoKEquivalences} and
  \ref{BCequalsInvertible} that the assignment $X\to\hat X$ gives a
  bijection between equivalence classes of such Brown-Comenetz
  dualizing modules and invertible \kn-local spectra; the inverse
  bijection sends $Y$ to $\Cellk Y$. The proof is completed by invoking
  \ref{InvertibleBijection} \qed
\end{titled}

\begin{rem}
  One could consider the \emph{moduli space} $\ModPic$ of invertible
  \kn-local spectra; this is the nerve of the category whose objects
  are the invertible \kn-local spectra and whose morphisms are the equivalences
  between them \cite{rModuli}. Up to homotopy \ModPic/ can be
  identified as a disjoint union $\coprod_\alpha B\Aut(I_\alpha)$,
  where $I_\alpha$ runs through the equivalence classes of invertible
  modules, and $\Aut(I_\alpha)$ is the group-like simplicial monoid of
  self-equivalences of $I_\alpha$.  The space $\ModPic$ is an
  associative monoid, even an infinite loop space, under a product
  induced by $\KTensor$; its group of components is the Picard group
  considered in \cite{rPicard}. Let $\EPP^\times$ denote the group of
  units of the ring spectrum \EPP, so that
  $\pi_0\EPP^\times\iso\EPP_0^\times$ and
  $\pi_i\EPP^\times\iso\pi_i\EPP$ for $i>0$. It seems that one can
  construct a second quadrant homotopy spectral sequence
  \[
  E^2_{-i,j}=H_c^i(\Gamma,\pi_jB\EPP^\times)\Rightarrow
  \pi_{j-i}\ModPic
  \]   
  which above total degree~1 agrees up to a shift with the Adams spectral sequence for
  $\pi_*\SK$ (compare the proof of \ref{DualOfE}). This agreement is
  not surprising, since each component of \ModPic/ is
  $B\Sn^\times$. The edge homomorphism $\pi_0\ModPic\to
  H_c^1(\Gamma,\EPP_0^\times)$ is the map used to detect Picard group
  elements in \cite{rPicard}. The obstructions mentioned in
  \cite[p.~69]{rHS} seem to be related to the first $k$-invariant of
  $B\ModPic$ (for associative pairings) or to the first $k$-invariant
  of the spectrum obtained by delooping \ModPic/ (for associative and
  commutative pairings).

\end{rem}

\providecommand{\bysame}{\leavevmode\hbox to3em{\hrulefill}\thinspace}

\end{document}